\sodef\lightspacing{}{.09em}{.2em plus .2em}{.5em plus .1em minus .1em}
\renewcommand{\baselinestretch}{\baselinestretch}
\renewcommand{\baselinestretch}{1.1}
\numberwithin{equation}{section}
\newtheorem{thm}{Theorem}[section]
\newtheorem{lem}[thm]{Lemma}
\newtheorem{cor}[thm]{Corollary}
\newtheorem{prop}[thm]{Proposition}
\newtheorem{rmk}[thm]{Remark}
\newcommand{\ord}{\operatorname{ord}}
\newcommand{\diag}{\operatorname{diag}}
\newcommand{\rank}{\operatorname{rank}}
\newcommand{\z}{\mathbb{Z}}
\newcommand{\q}{\mathbb{Q}}
\newcommand{\ind}{\operatorname{ind}}
\newcommand{\bbA}{\mathbb{A}}
\newcommand{\bbH}{\mathbb{H}}
\newcommand{\kn}{\mathfrak{n}}
\newcommand{\ko}{\mathfrak{o}}
\newcommand{\kp}{\mathfrak{p}}
\newcommand{\ks}{\mathfrak{s}}
\newcommand{\spmod}[1]{\ (\mathrm{mod}\ #1)}
\newcommand{\abs}[1]{\left\lvert #1 \right\rvert}
\newcommand{\ang}[1]{\left\langle #1 \right\rangle}
\begin{document}

\title[Minimal rank of P$n$U quadratic forms over local fields]{Minimal rank of primitively $n$-universal integral quadratic forms over local rings}

\author{Byeong-Kweon Oh and Jongheun Yoon}

\address{Department of Mathematical Sciences and Research Institute of Mathematics, Seoul National University, Seoul 08826, Korea}
\email{bkoh@snu.ac.kr}
\thanks{This work was supported by the National Research Foundation of Korea(NRF) grant funded by the Korea government(MSIT)(RS-2024-00342122) and (NRF-2020R1A5A1016126). }

\address{Department of Mathematical Sciences, Seoul National University, Seoul 08826, Korea}
\email{jongheun.yoon@snu.ac.kr}

\subjclass[2020]{Primary 11E08, 11E20}

\keywords{primitively $n$-universal integral quadratic forms}

\begin{abstract} Let $F$ be a local field and let $R$ be its ring of integers.  
For a positive integer $n$, an integral quadratic form defined over $R$ is called primitively $n$-universal if it primitively represents all quadratic forms of rank $n$.  It was proved in \cite{EG21rj} that the minimal rank of primitively $1$-universal quadratic forms over the $p$-adic integer ring $\z_p$ is $2$ if $p$ is odd, and $3$ otherwise.
  In this article, we completely determine the minimal rank of primitively $n$-universal quadratic forms over $R$ for any positive integer $n$ and any local ring $R$ such that $2$ is a unit or a prime.
\end{abstract}

\maketitle

\section{Introduction}
\label{sec:intro}


In $1770$, Lagrange proved that the quadratic form $x^2 + y^2 + z^2 + w^2$ represents all positive integers, that is, the diophantine equation 
\[
x^2 + y^2 + z^2 + w^2=n
\]
 has an integer solution $x,y,z$, and $w$ for any positive integer $n$. Any (positive definite and integral) quadratic form having this property is called \emph{universal}, as introduced by Dickson and Ross. The concept of universality has been generalized in many directions, including the $n$-universality and the primitive $n$-universality over various rings. Among those, the primitive $n$-universality over local rings is of particular interest in this article, whose precise definition will be given in what follows.

Let $F$ be a local field at place $\kp$ and let $R$ be the ring of integers in $F$. We generally assume that the characteristic of $F$ is not two. A quadratic form of rank $n$ defined over $F$ is a quadratic homogeneous polynomial
\[
 f(x_1, \dotsc, x_n) = \sum_{i, j = 1}^n f_{ij}x_i x_j \qquad \text{($f_{ij} = f_{ji}\in F$),}
\]
where the corresponding symmetric matrix $M_f = (f_{ij})$, which is called the Gram matrix of $f$, is nondegenerate. We say that $f$ is classically integral if the Gram matrix $M_f$ of $f$ is an integral matrix, that is, $f_{ij} = f_{ji} \in R$ for any $i$, $j$. Whereas, $f$ is called non-classically integral if $f$ is an integral polynomial, that is, $f_{ii}$ and $2f_{ij}$ are in $R$ for any $i$, $j$.

For two (integral) quadratic forms $f$ and $g$ of rank $n$ and $m$, respectively, we say that $f$ is represented by $g$ over $R$ if there is an integral matrix $T\in M_{n,m}(R)$ such that
\[
 M_f = T^t M_g T\text.
\]
We say that $f$ is isometric to $g$ if the above integral matrix $T$ is invertible. We say that $f$ is primitively represented by $g$ if the above integral matrix $T$ can be extended to an invertible matrix in $GL_m(R)$ by adding suitable $(m-n)$ columns. In particular, an integer $k \in R$ is primitively represented by $g$ if and only if there are integers $x_1$, \dots, $x_m\in R$ such that
\[
k = g(x_1, \dotsc, x_m) \text{\, and at least one of $x_i$ is a unit.}
\]

For a positive integer $n$, an integral quadratic form is called (\emph{primitively}) $n$-\emph{universal} if it (primitively, respectively) represents all integral quadratic forms of rank $n$. A complete classification of $n$-universal quadratic forms is recently published in \cite{He23}, \cite{HH23}, and \cite{HHX23}. Finding primitively $1$-universal quadratic forms was first considered, as far as the authors know, by Budarina in \cite{Bu10}. She classified all primitively $1$-universal quaternary quadratic forms when $R = \z_p$ for any odd prime $p$, and also obtained some partial classification when $R = \z_2$. Recently, Earnest and Gunawardana provided a complete classification of all primitively $1$-universal quadratic forms over $\z_p$ for any prime $p$, including results for both classically integral and non-classically integral forms when $p=2$. Budarina also classified in \cite{Bu11} primitively $2$-universal quadratic forms having squarefree odd discriminant.

The subsequent discussion will be conducted in a more suitable language of quadratic spaces and lattices. A quadratic $F$-space, simply a space, is a finite dimensional vector space $V$ over $F$ equipped with a symmetric bilinear form
\[
 B : V\times V\to F\text,
\]
and the corresponding quadratic form $Q: V\to F$ is defined by $Q(v) = B(v, v)$ for any $v\in V$. A quadratic $R$-lattice, simply a lattice, is a finitely generated $R$-submodule of a quadratic $F$-space. The scale $\ks L$ of a quadratic $R$-lattice $L$ is the $R$-submodule $B(L, L)$ of $F$, and the norm $\kn L$ of $L$ is the $R$-submodule of $F$ generated by $Q(L) = \{ Q(v) \mid v\in L\}$. An $R$-lattice $L$ is called integral if $\ks L\subseteq R$. Throughout this article, we always assume that any lattice is integral, unless stated otherwise. An (integral) $R$-lattice $L$ is called even if $\kn L\subseteq 2R$.

Given a basis $e_1, \dotsc, e_n$ for a space $V$, the corresponding symmetric $n\times n$ matrix $M_V = (B(e_i, e_j))$ is called the Gram matrix of $V$, and in this case, we write
\[
 V\cong (B(e_i, e_j))\quad\text{in}\quad e_1, \dotsc, e_n\text.
\]
The discriminant $dV$ of a space $V$ is an element of the quotient monoid $F / (F^\times)^2$ defined by $dV = (\det M_V)(F^\times)^2$. We call a space $V$ nondegenerate if $M_V$ is nondegenerate. If a space $V$ has the zero Gram matrix, then $V$ is called totally isotropic. For a nondegenerate space $V$, all maximal totally isotropic subspace of $V$ has the same dimension. Such a dimension is denoted by $\ind V$, called the Witt index of $V$. One may easily show that if $\ind V\ge r$, then $V$ is orthogonally split by a $2r$-dimensional hyperbolic space.

Given a basis $e_1, \dotsc, e_n$ for a lattice $L$, the corresponding symmetric $n\times n$ matrix $M_L = (B(e_i, e_j))$ is called the Gram matrix of $L$, and in this case, we write
\[
 L\cong (B(e_i, e_j))\quad\text{in}\quad e_1, \dotsc, e_n\text.
\]
The discriminant $dL$ of a lattice $L$ is an element of the quotient monoid $F / (R^\times)^2$ defined by $dL = (\det M_L)(R^\times)^2$. We call a lattice $L$ nondegenerate if $M_L$ is nondegenerate. Throughout this article, we always assume that a lattice is nondegenerate, unless stated otherwise. For a symmetric $n\times n$ matrix $N$ over $R$, we let $\langle N\rangle$ (or simply $N$) stand for any $R$-lattice whose Gram matrix is $N$. Moreover, for $\alpha_1$, \dots, $\alpha_n\in R$,  the notation
\[
 \langle \alpha_1, \dotsc, \alpha_n \rangle
\]
stands for the $R$-lattice having a diagonal Gram matrix $\diag(\alpha_1, \dotsc, \alpha_n)$. Let $\Delta = 1 - 4\rho$ be a fixed nonsquare unit in $R$ such that $\rho\in R^\times$. The symbols $\mathbb{H}$ and $\mathbb{A}$ stand for binary $R$-lattices whose Gram matrices are
\[
 \mathbb{H}\cong \left(\begin{smallmatrix}0&1\\1&0\end{smallmatrix}\right) \quad\text{and}\quad \mathbb{A}\cong \left(\begin{smallmatrix}2&1\\1&2\rho\end{smallmatrix}\right), \ \text{respectively.}
\]
In fact, $\mathbb{H}$ is the isotropic even unimodular lattice, and $\mathbb{A}$ is the anisotropic even unimodular lattice. Scalings of $\bbH$ and $\bbA$ by an integer $\alpha\in R$ are denoted by $\alpha\bbH\cong \left(\begin{smallmatrix} 0 & \alpha \\ \alpha & 0\end{smallmatrix}\right)$ and $\alpha\bbA\cong \left(\begin{smallmatrix} 2\alpha & \alpha \\ \alpha & 2\rho\alpha \end{smallmatrix}\right)$. For a positive integer $n$, $\bbH^n$ denotes the lattice $\bbH \mathbin{\perp} \dotsb \mathbin{\perp} \bbH$ with rank $2n$. We also use the same symbol $\mathbb{H}$ which stands for the hyperbolic plane $F\mathbb{H}$, if no confusion arises. 

For two $R$-lattices $L$ and $M$, a representation from $L$ to $M$ is a linear map $\sigma : L \to M$ such that
\[
 B(\sigma(v), \sigma(v')) = B(v, v')
\]
for any $v$, $v'\in L$.  In addition, if $\sigma(L)$ is a direct summand of $M$, then such a representation is called \emph{primitive}. If there exists a (primitive) representation $\sigma : L \to M$, then we say that $L$ is (primitively, respectively) represented by $M$. If such a representation $\sigma$ is bijective then we say that $L$ is isometric to $M$, denoted by $\sigma: L\cong M$. An $R$-lattice $M$ is called (\emph{primitively}) $n$-\emph{universal} if $M$ (primitively, respectively) represents all $R$-lattices of rank $n$.  We denote by $u_R(n)$ the minimal rank of $n$-universal $R$-lattices, and denote by $u^\ast_R(n)$ the minimal rank of primitively $n$-universal (P$n$U, for short) $R$-lattices. For $u_R(n)$, the followings are well-known (see, for instance, \cite{He23, HH23, HHX23}): If $R$ is nondyadic, then
\[
 u_R(n) = \begin{cases}
  2n & \text{if $1\le n\le 2$,}\\
  n+3 & \text{if $n\ge 3$;}
 \end{cases}
\]
If $R$ is unramified dyadic, then
\[
 u_R(n) = \begin{cases}
  2n+1 & \text{if $1\le n\le 2$,}\\
  n+3 & \text{if $n\ge 3$.}
 \end{cases}
\]

The aim of this article is to determine $u^\ast_R(n)$ completely for any integer $n$ and any local ring $R$ such that $2$ is a unit or a prime.

Let $I$ be an ideal in $R$. For any $\alpha$, $\beta\in R$, the congruence $\alpha \equiv \beta \spmod{I}$ takes on its usual meaning, namely $\alpha - \beta\in I$. For any $\gamma\in R$, we write
\[
 \alpha \equiv \beta \pmod{\gamma}
\]
instead of $\alpha \equiv \beta \spmod{\gamma R}$. Moreover, for any subset $S$, $T$ of $R$, the congruence $S \equiv T \spmod{I}$ means that $\alpha \equiv \beta \spmod{I}$ for any $\alpha\in S$, $\beta\in T$. For instance, we have $(\z_2^\times)^2 \equiv 1\spmod8$.

Any unexplained notation and terminology can be found in \cite{Ki} or \cite{OM}.

\section{Witt indices of spaces admitting a P$n$U lattice}
\label{sec:general}

In this section, we prove some necessary condition for a quadratic space having a primitively $n$-universal  $R$-lattice. More precisely, we prove that if $M$ is a primitively $n$-universal $R$-lattice, then the space $FM$ represents a $2n$-dimensional hyperbolic space. In particular, we have $u^\ast_R(n)\ge 2n$ for any positive integer $n$.
In fact, the result follows from an easy application of a multivariable version of Hensel's lemma given below. Let $K$ be a complete field with respect to an absolute value $\abs{\cdot}$ satisfying the strong triangle inequality, and let $\ko := \{x\in K: \abs{x}\le 1\}$.

Let $n\ge 1$ and define a norm of $\mathbf{c} = (c_1, \dots, c_n)\in K^n$ by $\lVert \mathbf{c} \rVert := \max_i \abs{c_i}$. Denote the derivative matrix and Jacobian of $\mathbf{f} = (f_1(\mathbf{X}), \dots, f_n(\mathbf{X})) \in K[X_1, \dots, X_n]^n$ by 
\[
 (D\mathbf{f})(\mathbf{X}) = \left(\frac{\partial f_i}{\partial X_j}\right)_{1\le i, j\le n} \quad \text{and} \quad J_\mathbf{f}(\mathbf{X}) = \det((D\mathbf{f})(\mathbf{X})).
\]

\begin{thm} \label{con1}
Let $\mathbf{f} \in \ko[\mathbf{X}]^n$ and let $\mathbf{a} \in \ko^n$ satisfy $\lVert \mathbf{f}(\mathbf{a}) \rVert < \abs{J_\mathbf{f}(\mathbf{a})}^2$. Then there is a unique $\bm{\alpha}\in\ko^n$ such that $\mathbf{f}(\bm{\alpha}) = \mathbf{0}$ and $\lVert \bm{\alpha} - \mathbf{a} \rVert < \abs{J_\mathbf{f}(\mathbf{a})}$.
\end{thm}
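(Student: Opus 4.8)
The plan is to run the non-archimedean Newton iteration and show it converges quadratically to the desired root. Write $d := \abs{J_\mathbf{f}(\mathbf{a})}$ and note that, since $\mathbf{f}\in\ko[\mathbf{X}]^n$ and $\mathbf{a}\in\ko^n$, every entry of $(D\mathbf{f})(\mathbf{a})$ lies in $\ko$, so $d\le 1$; the hypothesis reads $\lVert\mathbf{f}(\mathbf{a})\rVert < d^2$, hence $d\ne 0$ and $(D\mathbf{f})(\mathbf{a})$ is invertible over $K$. I would then define $\mathbf{a}_0 := \mathbf{a}$ and
\[
 \mathbf{a}_{k+1} := \mathbf{a}_k - \big((D\mathbf{f})(\mathbf{a}_k)\big)^{-1}\mathbf{f}(\mathbf{a}_k),
\]
whenever the inverse exists.

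The engine of the argument is the ultrametric Taylor estimate. Since the coefficients of $\mathbf{f}$ lie in $\ko$, expanding $\mathbf{f}(\mathbf{x}+\mathbf{h})$ in powers of $\mathbf{h}$ produces only coefficients in $\ko$ (the binomial coefficients arising are rational integers), so for $\mathbf{x},\mathbf{h}\in\ko^n$ one has
\[
 \mathbf{f}(\mathbf{x}+\mathbf{h}) = \mathbf{f}(\mathbf{x}) + (D\mathbf{f})(\mathbf{x})\mathbf{h} + \mathbf{R}(\mathbf{x},\mathbf{h}),\qquad \lVert \mathbf{R}(\mathbf{x},\mathbf{h})\rVert \le \lVert \mathbf{h}\rVert^2,
\]
because every monomial in the remainder carries a factor $\mathbf{h}^{\bm\delta}$ with $\abs{\bm\delta}\ge 2$ and $\lVert\mathbf{h}\rVert\le 1$. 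Applying the same expansion to the entries $\partial f_i/\partial X_j$ shows $(D\mathbf{f})(\mathbf{x}+\mathbf{h}) = (D\mathbf{f})(\mathbf{x}) + E$ with every entry of $E$ of absolute value at most $\lVert\mathbf{h}\rVert$; expanding the determinant multilinearly along rows then gives $\abs{J_\mathbf{f}(\mathbf{x}+\mathbf{h}) - J_\mathbf{f}(\mathbf{x})}\le \lVert\mathbf{h}\rVert$. Finally, since $(D\mathbf{f})(\mathbf{x})$ has adjugate with entries in $\ko$, Cramer's rule yields $\lVert (D\mathbf{f})(\mathbf{x})^{-1}\mathbf{v}\rVert \le \abs{J_\mathbf{f}(\mathbf{x})}^{-1}\lVert\mathbf{v}\rVert$.

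With these in hand I would prove by induction on $k$ that $\abs{J_\mathbf{f}(\mathbf{a}_k)} = d$, that $\lVert\mathbf{f}(\mathbf{a}_k)\rVert \le d^2 c^{2^k}$, and that $\lVert \mathbf{a}_{k+1}-\mathbf{a}_k\rVert \le d\,c^{2^k}$, where $c := \lVert\mathbf{f}(\mathbf{a})\rVert/d^2 < 1$. The step $\mathbf{h}_k := \mathbf{a}_{k+1}-\mathbf{a}_k$ has norm $\le d^{-1}\lVert \mathbf{f}(\mathbf{a}_k)\rVert \le d\,c^{2^k} < d = \abs{J_\mathbf{f}(\mathbf{a}_k)}$ by Cramer, which forces $\abs{J_\mathbf{f}(\mathbf{a}_{k+1})} = d$ and keeps $\mathbf{a}_{k+1}\in\ko^n$; and the Newton cancellation $\mathbf{f}(\mathbf{a}_{k+1}) = \mathbf{R}(\mathbf{a}_k,\mathbf{h}_k)$ gives $\lVert \mathbf{f}(\mathbf{a}_{k+1})\rVert \le \lVert\mathbf{h}_k\rVert^2 \le d^2 c^{2^{k+1}}$, the quadratic improvement. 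Since $c<1$ the steps tend to $0$, so $(\mathbf{a}_k)$ is Cauchy and converges in the complete, closed set $\ko^n$ to some $\bm\alpha$ with $\mathbf{f}(\bm\alpha) = 0$ by continuity; the ultrametric inequality bounds $\lVert \bm\alpha - \mathbf{a}\rVert$ by $\max_k\lVert\mathbf{h}_k\rVert = \lVert\mathbf{h}_0\rVert \le dc < d = \abs{J_\mathbf{f}(\mathbf{a})}$, giving the required proximity.

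For uniqueness, suppose $\bm\beta\in\ko^n$ also satisfies $\mathbf{f}(\bm\beta)=0$ and $\lVert\bm\beta-\mathbf{a}\rVert < d$. Then $\lVert\bm\beta-\bm\alpha\rVert < d$ and, by the determinant perturbation bound, $\abs{J_\mathbf{f}(\bm\alpha)} = d$. Writing $\mathbf{k} := \bm\beta-\bm\alpha$ and using $\mathbf{f}(\bm\beta) = \mathbf{f}(\bm\alpha) + (D\mathbf{f})(\bm\alpha)\mathbf{k} + \mathbf{R}(\bm\alpha,\mathbf{k})$ with $\mathbf{f}(\bm\alpha)=\mathbf{f}(\bm\beta)=0$ gives $(D\mathbf{f})(\bm\alpha)\mathbf{k} = -\mathbf{R}(\bm\alpha,\mathbf{k})$, whence $\lVert\mathbf{k}\rVert \le d^{-1}\lVert\mathbf{k}\rVert^2$ by Cramer; if $\mathbf{k}\ne\mathbf{0}$ this forces $\lVert\mathbf{k}\rVert\ge d$, contradicting $\lVert\mathbf{k}\rVert<d$, so $\bm\beta=\bm\alpha$. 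I expect the main technical point to be the bookkeeping that keeps $\abs{J_\mathbf{f}(\mathbf{a}_k)}$ pinned at $d$ throughout the iteration: invertibility of the Newton matrix, the Cramer bound, and the quadratic convergence all hinge on this invariant, and it in turn relies on the strict inequality $\lVert\mathbf{f}(\mathbf{a})\rVert < d^2$ rather than the weaker bound one might first guess.
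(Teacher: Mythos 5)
Your proof is correct and is essentially the argument behind the paper's proof of this statement, which consists solely of a citation to Theorem~3.3 of Conrad's notes \cite{Co}: that reference proves the result by exactly this non-archimedean Newton iteration, with the same ultrametric Taylor bound $\lVert\mathbf{R}(\mathbf{x},\mathbf{h})\rVert\le\lVert\mathbf{h}\rVert^2$, the adjugate/Cramer estimate $\lVert(D\mathbf{f})(\mathbf{x})^{-1}\mathbf{v}\rVert\le\abs{J_\mathbf{f}(\mathbf{x})}^{-1}\lVert\mathbf{v}\rVert$, and the invariant $\abs{J_\mathbf{f}(\mathbf{a}_k)}=\abs{J_\mathbf{f}(\mathbf{a})}$ pinned by the strong triangle inequality. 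The only cosmetic slip is the asserted equality $\max_k\lVert\mathbf{h}_k\rVert=\lVert\mathbf{h}_0\rVert$, which should simply be the bound $\max_k\lVert\mathbf{h}_k\rVert\le dc<d$ (all your displayed estimates already give this), and it does not affect the conclusion.
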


\begin{proof}
See Theorem 3.3 of \cite{Co}. 
\end{proof}

\begin{thm} \label{con2}
For $m\ge n\ge 1$, let $\mathbf{f}(\mathbf{X}) = (f_1, \dots, f_n)\in \ko[X_1, \dots, X_m]^n$ and $\mathbf{a} = (a_1, \dots, a_m)\in \ko^m$ satisfy $\lVert \mathbf{f}(\mathbf{a}) \rVert < \abs{J_{\mathbf{f}, n}(\mathbf{a})}^2$, where $J_{\mathbf{f}, n}(\mathbf{a}) = \det\left(\frac{\partial f_i}{\partial X_j}\right)_{1\le i, j\le n}$. Then there is an $\bm{\alpha}\in\ko^n$ such that
\[
 \mathbf{f}(\alpha_1, \dots, \alpha_n, a_{n+1}, \dots, a_m) = \mathbf{0}
\]
and $\abs{\alpha_i - a_i} < \abs{J_{\mathbf{f},n}(\mathbf{a})}$ for $i = 1, \dots, n$.
\end{thm}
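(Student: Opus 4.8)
The plan is to reduce Theorem \ref{con2} directly to the square case Theorem \ref{con1} by freezing the last $m-n$ variables at their prescribed values. Concretely, I would substitute the constants $a_{n+1}, \dots, a_m$ into the final slots of $\mathbf{f}$ to obtain a system of $n$ polynomials in exactly $n$ variables,
\[
 g_i(X_1, \dots, X_n) := f_i(X_1, \dots, X_n, a_{n+1}, \dots, a_m) \quad (i = 1, \dots, n),
\]
and set $\mathbf{g} = (g_1, \dots, g_n)$ together with $\mathbf{a}' = (a_1, \dots, a_n)$. Since $a_{n+1}, \dots, a_m \in \ko$ and $\ko$ is a ring, each $g_i$ again lies in $\ko[X_1, \dots, X_n]$, so $\mathbf{g} \in \ko[X_1, \dots, X_n]^n$ and $\mathbf{a}'\in\ko^n$ are legitimate inputs for Theorem \ref{con1}.

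Next I would check that the hypothesis of Theorem \ref{con2} transfers verbatim into the hypothesis of Theorem \ref{con1} for the pair $(\mathbf{g}, \mathbf{a}')$. Evaluating, $\mathbf{g}(\mathbf{a}') = \mathbf{f}(\mathbf{a})$, whence $\lVert \mathbf{g}(\mathbf{a}') \rVert = \lVert \mathbf{f}(\mathbf{a}) \rVert$. For the Jacobian, differentiation in $X_1, \dots, X_n$ commutes with substitution in the remaining variables, so $\frac{\partial g_i}{\partial X_j}(\mathbf{a}') = \frac{\partial f_i}{\partial X_j}(\mathbf{a})$ for all $1 \le i, j \le n$; taking determinants of these two $n\times n$ blocks gives $J_{\mathbf{g}}(\mathbf{a}') = J_{\mathbf{f}, n}(\mathbf{a})$. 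Consequently the assumption $\lVert \mathbf{f}(\mathbf{a}) \rVert < \abs{J_{\mathbf{f}, n}(\mathbf{a})}^2$ becomes exactly $\lVert \mathbf{g}(\mathbf{a}') \rVert < \abs{J_{\mathbf{g}}(\mathbf{a}')}^2$.

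With these two identities in hand I would invoke Theorem \ref{con1} for $\mathbf{g}$ and $\mathbf{a}'$ to produce a $\bm{\alpha} = (\alpha_1, \dots, \alpha_n) \in \ko^n$ with $\mathbf{g}(\bm{\alpha}) = \mathbf{0}$ and $\lVert \bm{\alpha} - \mathbf{a}' \rVert < \abs{J_{\mathbf{g}}(\mathbf{a}')} = \abs{J_{\mathbf{f}, n}(\mathbf{a})}$. Unwinding the definition of $\mathbf{g}$, the equation $\mathbf{g}(\bm{\alpha}) = \mathbf{0}$ is precisely $\mathbf{f}(\alpha_1, \dots, \alpha_n, a_{n+1}, \dots, a_m) = \mathbf{0}$, and the norm bound reads $\abs{\alpha_i - a_i} < \abs{J_{\mathbf{f}, n}(\mathbf{a})}$ for $i = 1, \dots, n$, which is exactly the asserted conclusion. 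I expect no genuine obstacle here beyond the two bookkeeping verifications above: the entire content of the statement is carried by Theorem \ref{con1}, and in fact its uniqueness clause yields (more than Theorem \ref{con2} requests) uniqueness of the solution among nearby points with the last $m-n$ coordinates held fixed at $a_{n+1}, \dots, a_m$.
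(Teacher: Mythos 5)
Your reduction is correct: specializing the last $m-n$ variables at $a_{n+1},\dots,a_m$ preserves the hypothesis (since $\mathbf{g}(\mathbf{a}') = \mathbf{f}(\mathbf{a})$ and $J_{\mathbf{g}}(\mathbf{a}') = J_{\mathbf{f},n}(\mathbf{a})$), and Theorem \ref{con1} then yields exactly the claimed conclusion. The paper itself only cites Theorem 3.8 of \cite{Co} here, and your argument is precisely the specialization-to-the-square-case proof given in that reference, so this is essentially the same approach.
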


\begin{proof}
See Theorem 3.8 of \cite{Co}. 
\end{proof}

\begin{lem}
For $m\ge n\ge 1$, let $G = (g_{ij})$ and $H$ be symmetric matrices over $\ko$ of size $n$ and $m$, respectively, and let $A = (\mathbf{a}_1, \dots, \mathbf{a}_n) = (a_{ij})_{m\times n}$ be a matrix over $\ko$ such that $G = A^t H A$. Suppose that $H$ is nondegenerate and $A$ is primitive. Then, for any $(\gamma_1, \dots, \gamma_n)\in\ko^n$ satisfying
\[
 \max_{1\le i\le n} \abs{g_{in} - \gamma_i} < 4\abs{\det H}^2\text{,}
\]
there is an $\bm{\alpha}\in\ko^m$ such that $A_1 = (\mathbf{a}_1, \dots, \mathbf{a}_{n-1}, \bm{\alpha})$ is also primitive, and $(g'_{ij}) = A_1^t F A_1$ satisfies
\[
 g'_{ij} = \begin{cases}
  \gamma_j & \text{if }i = n\text{,}\\
  \gamma_i & \text{if }j = n\text{,}\\
  g_{ij} & \text{otherwise.}
 \end{cases}
\]
\end{lem}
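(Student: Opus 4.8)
The plan is to rephrase the conclusion as the solvability of a single system of equations in the unknown new column $\bm\alpha$, and then to produce a solution by the multivariable Hensel's lemma of Theorem~\ref{con2}. Indeed, if $A_1 = (\mathbf a_1,\dots,\mathbf a_{n-1},\bm\alpha)$, then the entries $g'_{ij}$ with $i,j\le n-1$ automatically coincide with $g_{ij}$, while the last row and column of $A_1^tHA_1$ are $\mathbf a_i^tH\bm\alpha$ ($i<n$) and $\bm\alpha^tH\bm\alpha$; hence the prescribed shape of $(g'_{ij})$ is equivalent to the system $\mathbf a_i^tH\bm\alpha=\gamma_i$ ($1\le i\le n-1$) together with $\bm\alpha^tH\bm\alpha=\gamma_n$. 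I would therefore set
\[
 f_i(\mathbf X) = \mathbf a_i^t H\mathbf X - \gamma_i\ (1\le i\le n-1),\qquad f_n(\mathbf X) = \mathbf X^t H\mathbf X - \gamma_n,
\]
so that $\mathbf f = (f_1,\dots,f_n)\in\ko[\mathbf X]^n$ and $\mathbf a_n$ is an approximate zero, namely $\mathbf f(\mathbf a_n) = (g_{1n}-\gamma_1,\dots,g_{(n-1)n}-\gamma_{n-1},g_{nn}-\gamma_n)$, whence $\lVert\mathbf f(\mathbf a_n)\rVert = \max_{1\le i\le n}\abs{g_{in}-\gamma_i}$, which the hypothesis controls.

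Next I would compute the relevant Jacobian. Differentiating, the $i$-th row of $(D\mathbf f)(\mathbf a_n)$ is $\mathbf a_i^tH$ for $i<n$ and $2\,\mathbf a_n^tH$ for $i=n$, so $(D\mathbf f)(\mathbf a_n) = \diag(1,\dots,1,2)\,A^tH$. Consequently, for any set $S$ of $n$ columns, the associated Jacobian equals $2\det\bigl((A^tH)_{[:,S]}\bigr)$, of absolute value $\abs2\,\bigl\lvert\det(A^tH)_{[:,S]}\bigr\rvert$. To apply Theorem~\ref{con2}, after relabeling the variables so that those indexed by $S$ come first, it suffices to exhibit an $S$ with $\abs2^2\bigl\lvert\det(A^tH)_{[:,S]}\bigr\rvert^2 \ge 4\abs{\det H}^2$, that is, with $\bigl\lvert\det(A^tH)_{[:,S]}\bigr\rvert\ge\abs{\det H}$; here I read $4\abs{\det H}^2$ as $\abs{4(\det H)^2}=\abs2^2\abs{\det H}^2$, which is exactly what makes the hypothesis $\lVert\mathbf f(\mathbf a_n)\rVert<4\abs{\det H}^2$ coincide with the Hensel requirement $\lVert\mathbf f(\mathbf a_n)\rVert<\abs J^2$ (strictly, even in residue characteristic two).

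The heart of the argument, which I expect to be the main obstacle, is the lower bound $\max_S\bigl\lvert\det(A^tH)_{[:,S]}\bigr\rvert\ge\abs{\det H}$. Here I would exploit primitivity of $A$: complete $A$ to a matrix $\widetilde A\in GL_m(\ko)$ and set $C = \widetilde A^tH\in M_m(\ko)$. Its first $n$ rows constitute $A^tH$, and $\abs{\det C} = \abs{\det\widetilde A}\,\abs{\det H} = \abs{\det H}$ since $\det\widetilde A\in\ko^\times$. Expanding $\det C$ by Laplace along its first $n$ rows expresses it as a sum of terms $\pm\det(C_{[1:n,S]})\det(C_{[n+1:m,\bar S]})$, whose complementary minors lie in $\ko$; the strong triangle inequality then yields $\abs{\det H} = \abs{\det C}\le\max_S\abs{\det(C_{[1:n,S]})} = \max_S\bigl\lvert\det(A^tH)_{[:,S]}\bigr\rvert$, as desired. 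I would fix an $S$ attaining $\bigl\lvert\det(A^tH)_{[:,S]}\bigr\rvert\ge\abs{\det H}$.

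Finally I would invoke Theorem~\ref{con2} relative to the variables indexed by $S$, obtaining $\bm\alpha\in\ko^m$ that solves $\mathbf f(\bm\alpha)=\mathbf 0$, hence realizes the prescribed entries $\gamma_1,\dots,\gamma_n$, and that differs from $\mathbf a_n$ only in the coordinates of $S$, each difference having absolute value $<\abs J\le1$. Thus $\bm\alpha\equiv\mathbf a_n\spmod{\p}$, so $A_1$ and $A$ share the same reduction modulo $\p$; since $A$ is primitive its reduction has full column rank, whence $A_1$ is primitive as well. Together with the previous paragraphs this establishes every assertion of the lemma.
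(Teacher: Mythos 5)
Your proposal is correct, and its overall architecture coincides with the paper's: the same system $f_i(\mathbf X)=\mathbf a_i^tH\mathbf X-\gamma_i$ ($i<n$), $f_n(\mathbf X)=\mathbf X^tH\mathbf X-\gamma_n$ with approximate zero $\mathbf a_n$, the same derivative matrix $\diag(1,\dots,1,2)A^tH$, and the same reduction to exhibiting an $n\times n$ subdeterminant of $A^tH$ of absolute value at least $\abs{\det H}$, followed by Theorem~\ref{con2}. You diverge only at that key subdeterminant bound: the paper completes $A^t$ to $U\in GL_m(\ko)$, triangularizes $UHV=T$ over the PID, bounds the leftmost $n\times n$ minor of $A^tHV$ by $\abs{\prod_{i=1}^m t_{ii}}=\abs{\det H}$, and then observes (in effect, Cauchy--Binet) that this minor is a $\ko$-linear combination of minors of $A^tH$; you instead apply the generalized Laplace expansion of $\det(\widetilde A^tH)$ along its first $n$ rows and the ultrametric inequality, using that the complementary minors lie in $\ko$. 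Your route is more direct, avoiding both the triangularization and the slightly implicit ``linear combination of subdeterminants'' step. Two further points in your favor: you read $4\abs{\det H}^2$ as $\abs{4}\abs{\det H}^2=\abs{2}^2\abs{\det H}^2$, which is indeed the intended meaning (consistent with the ideal $4(dM)^2\kp$ in the subsequent corollary) and is exactly what matches the Hensel threshold $\abs{J}^2$ with $\abs{J}=\abs{2}\abs{\det(A^tH)_{[:,S]}}$ in residue characteristic two; and you explicitly verify primitivity of $A_1$ from $\bm\alpha\equiv\mathbf a_n\spmod{\p}$, a point the paper's proof leaves implicit (it follows from the bound $\abs{\alpha_i-a_i}<\abs{J_{\mathbf f,n}(\mathbf a)}\le 1$ in Theorem~\ref{con2}, as you note).
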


\begin{proof}
We regard\, $g_{1n} - \gamma_1$, \,\dots, \,$g_{n-1, n} - \gamma_{n-1}$,\, and\, $g_{nn} - \gamma_n$ as $n$ polynomials in $m$ variables $a_{1n}, \dots, a_{mn}$. Note that $g_{in} - \gamma_i = g_{ni} - \gamma_i$ for $1\le i\le n$. To apply Theorem \ref{con2}, it suffices to show that the derivative matrix has an $n\times n$ subdeterminant whose absolute value is greater than or equal to $2\abs{\det H}$.

First, note that the derivative matrix is $\diag(1, \dots, 1, 2) A^t H$. Hence, it suffices to show that $A^t H$ has an $n\times n$ subdeterminant whose absolute value is greater than or equal to $\abs{\det H}$. By the primitivity of $A$, we may complete $A^t$ to an element of $GL_m(\ko)$, namely
\[
 U = \begin{pmatrix}A^t \\\hline C\end{pmatrix}\text{.}
\]
By the theory of modules over a principal ideal domain, there is a $V\in GL_m(\ko)$ such that $U H V = T = (t_{ij})_{m\times m}$ is lower triangular. Clearly, $A^t H V = (t_{ij})_{n\times m}$ has a submatrix (the leftmost one) whose determinant is $d = \prod_{i=1}^n t_{ii}$. Then, 
\[
 \abs{d} = \abs{\prod_{i=1}^n t_{ii}} \ge \abs{\prod_{i=1}^m t_{ii}} = \abs{\det UHV} = \abs{\det H}\text.
\]
Now, observe that $d$ is a linear combination of $n\times n$ subdeterminants of $A^t H$. Hence, there exists at least one with absolute value greater than or equal to $\abs{\det H}$. This proves the lemma.
\end{proof}

From now until the end of this section, we do not assume that a lattice is nondegenerate.

\begin{cor}
Let $M$ be a nondegenerate $R$-lattice. Let $N$ and $N'$ be $R$-lattices of the same rank $n$ with Gram matrices $G$, $G'$, respectively, such that $G - G' \in M_n (4(dM)^2 \kp)$. Then $N$ is primitively represented by $M$ if and only if $N'$ is primitively represented by $M$.
\end{cor}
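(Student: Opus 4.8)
The plan is to reduce the corollary to the preceding lemma. The key observation is that a primitive representation $N \to M$ is recorded by a primitive matrix $A \in M_{m,n}(R)$ satisfying $G = A^t (M_M) A$, where $M_M$ is the Gram matrix of $M$ and $m = \rank M$. So I would start by assuming $N$ is primitively represented by $M$, fix such a primitive matrix $A$ with $A^t M_M A = G$, and then modify $A$ one column at a time to obtain a primitive matrix realizing $G'$ instead.

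The main work is an induction on columns. Suppose I have already arranged, via a primitive matrix $A$, that the first $k-1$ columns of the Gram matrix agree with $G'$ and the remaining columns still agree with $G$; I then apply the Lemma to adjust the $k$-th column. Concretely, I would permute coordinates so that the column being modified plays the role of the ``$n$-th'' column in the Lemma's statement, set the target entries $(\gamma_1, \dotsc, \gamma_n)$ to be the corresponding entries of $G'$, and check the hypothesis $\max_i \abs{g_{in} - \gamma_i} < 4\abs{\det M_M}^2$. This is exactly where the assumption $G - G' \in M_n(4(dM)^2\kp)$ is used: every entry of $G - G'$ lies in $4(dM)^2\kp$, and since $dM = (\det M_M)(R^\times)^2$ we have $\abs{4(dM)^2\kp} < 4\abs{\det M_M}^2$ in the metric of $F$ (because $\kp$ contributes a strictly smaller valuation). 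The Lemma then produces a new primitive matrix $A_1$ whose associated Gram matrix has the $k$-th row and column replaced by the targets $\gamma_i$ while leaving all other entries unchanged; crucially the off-diagonal entries in the already-corrected columns are not disturbed, so the induction hypothesis is preserved.

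The one subtlety I expect to watch carefully is the bookkeeping of which entries change at each step. Each application of the Lemma rewrites an entire row and column, so I must order the updates so that correcting column $k$ does not re-corrupt a column $j < k$ that was already fixed. Since the Lemma only alters entries involving the $n$-th index and leaves $g_{ij}$ for $i,j \neq n$ untouched, processing the columns in order $1, 2, \dotsc, n$ works: when I fix column $k$, the entries $g_{jk}$ with $j < k$ are set to $\gamma_{jk} = G'_{jk}$, which already match because those were imposed at step $j$; I only need the remaining targets to agree with $G'$, and the hypothesis guarantees every such entry is close enough. After $n$ steps the Gram matrix of $A_n^t M_M A_n$ equals $G'$ exactly, and $A_n$ is primitive, so $N'$ is primitively represented by $M$.

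By symmetry, the roles of $G$ and $G'$ are interchangeable (the condition $G - G' \in M_n(4(dM)^2\kp)$ is symmetric), so the converse implication follows by the identical argument with $G$ and $G'$ swapped. This establishes the equivalence. The only genuine obstacle is verifying the metric inequality $\abs{4(dM)^2\kp} < 4\abs{\det M_M}^2$, which amounts to unwinding the definition of $dM$ and noting that multiplication by a nonunit of $\kp$ strictly decreases absolute value under the strong triangle inequality; everything else is the inductive application of the Lemma.
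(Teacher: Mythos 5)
Your proposal is correct and is exactly the argument the paper intends: the corollary is stated without explicit proof as an immediate consequence of the preceding lemma, applied column by column just as you describe, with the hypothesis $G - G' \in M_n(4(dM)^2\kp)$ supplying the strict inequality $\max_i \abs{g_{ik} - \gamma_i} < \abs{4(\det M_M)^2}$ at each step. Your bookkeeping (already-corrected entries match their targets exactly, so later steps do not disturb earlier ones, and primitivity is preserved by the lemma's conclusion) and the symmetry remark for the converse are both right.
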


\begin{cor}
Let $M$ be a nondegenerate $R$-lattice. Then the followings are equivalent:
\begin{enumerate}[\textup{(\arabic{enumi})}]
\item $\ind FM \ge n$;
\item some lattice $N$ of rank $n$ with $\ks N \subseteq 4(dM)^2 \kp$ is primitively represented by $M$;
\item all lattices $N$ of rank $n$ with $\ks N \subseteq 4(dM)^2 \kp$ are primitively represented by $M$.
\end{enumerate}
\end{cor}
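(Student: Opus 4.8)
The plan is to reduce both (2) and (3) to a single, very concrete statement and then read off condition (1) geometrically. Write $N_0$ for the rank-$n$ lattice whose Gram matrix is the zero matrix $0_n$; this is legitimate here since we no longer require lattices to be nondegenerate, and $\ks N_0 = \{0\}\subseteq 4(dM)^2\kp$, so $N_0$ is one of the lattices allowed in (2) and (3). For every lattice $N$ of rank $n$ with $\ks N\subseteq 4(dM)^2\kp$ we have $G_N - 0_n = G_N\in M_n(4(dM)^2\kp)$, so the preceding corollary shows that $N$ is primitively represented by $M$ if and only if $N_0$ is. Consequently both (2) and (3) are equivalent to the single assertion
\[
 (\ast)\qquad N_0\ \text{is primitively represented by}\ M\text.
\]
This settles (2) $\Leftrightarrow$ (3) at once, and reduces everything to proving (1) $\Leftrightarrow(\ast)$.

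For $(\ast)\Rightarrow(1)$, recall that a primitive representation $\sigma\colon N_0\to M$ corresponds to an $m\times n$ matrix over $R$ (with $m=\rank M$) that extends to an element of $GL_m(R)$; in particular it has full column rank, so $\sigma$ is injective and $\sigma(N_0)$ is a rank-$n$ direct summand of $M$. Since $N_0$ is totally isotropic, so is $\sigma(N_0)$, whence $F\sigma(N_0)\subseteq FM$ is an $n$-dimensional totally isotropic subspace and $\ind FM\ge n$.

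For $(1)\Rightarrow(\ast)$, I would choose an $n$-dimensional totally isotropic subspace $W\subseteq FM$, which exists because $\ind FM\ge n$, and set $L:=M\cap W$. The key point is that $L$ is automatically a primitive sublattice of $M$ of rank $n$ isometric to $N_0$: it is saturated, since $rx\in L$ with $x\in M$ and $0\neq r\in R$ forces $x\in W$ (as $W$ is an $F$-subspace) and $x\in M$, hence $x\in L$; over the discrete valuation ring $R$ a saturated submodule is a direct summand, so $L$ is primitive. Clearing denominators shows $FL=W$, so $\rank L=n$, and $L$ is totally isotropic because $W$ is, giving $L\cong N_0$ and thus $(\ast)$.

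The only genuinely substantive step is this saturation argument in $(1)\Rightarrow(\ast)$: recognizing that intersecting $M$ with a totally isotropic subspace already produces a primitive (direct-summand) sublattice of the correct rank, so that no further use of the perturbation lemma is needed. Everything else is a formal consequence of the preceding corollary and of the definition of the Witt index, and the role of allowing degenerate lattices is exactly that it lets $N_0$, the totally isotropic rank-$n$ lattice, serve as the common reduction point for (2), (3), and $(\ast)$.
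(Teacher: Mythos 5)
Your proof is correct and fills in exactly the argument the paper leaves implicit: the equivalence of (2) and (3) via the preceding perturbation corollary (with the totally isotropic lattice $N_0$ as the common reduction point, legitimate since nondegeneracy is no longer assumed in this section), together with the standard observation that $\ind FM \ge n$ holds if and only if $M$ contains a primitive totally isotropic sublattice of rank $n$, which your saturation argument for $L = M \cap W$ establishes cleanly. This is essentially the paper's intended route, so no further comparison is needed.
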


\begin{thm}\label{thm:indgen}
If $M$ is a primitively $n$-universal nondegenerate $R$-lattice, then $\ind FM \ge n$.  In particular, $u^\ast_R(n)\ge 2n$.
\end{thm}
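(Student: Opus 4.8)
The plan is to obtain the Witt-index bound directly from the preceding corollary, whose equivalence of conditions (1) and (2) already carries the essential content, and then to convert the bound on $\ind FM$ into the bound on $\rank M$.

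First I would exhibit a suitable test lattice. Fix a uniformizer $\pi$ of $R$ and, for an integer $k$, let $N\cong\langle\pi^k,\dots,\pi^k\rangle$, a nondegenerate lattice of rank $n$ with scale $\ks N=\pi^k R$. Since $M$ is nondegenerate, $dM\ne0$, so $4(dM)^2\kp$ is a nonzero ideal of $R$; viewing $R$ as a discrete valuation ring, any $k\ge\ord(4)+2\ord(dM)+1$ forces $\ks N\subseteq 4(dM)^2\kp$. This is the only point where the dichotomy ``$2$ a unit'' versus ``$2$ a prime'' enters, and it merely shifts the threshold on $k$ by $\ord 4$, so such an $N$ always exists.

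Next, since $M$ is primitively $n$-universal it primitively represents every nondegenerate $R$-lattice of rank $n$, in particular the lattice $N$ just constructed. Thus condition (2) of the preceding corollary is met, and the equivalence $(2)\Leftrightarrow(1)$ yields $\ind FM\ge n$. To deduce $u^\ast_R(n)\ge 2n$, I would appeal to the elementary fact recorded in the introduction: if $\ind FM\ge n$ then $FM$ is orthogonally split by a $2n$-dimensional hyperbolic space, whence $\dim_F FM\ge 2n$. As $\rank M=\dim_F FM$ and $M$ was an arbitrary primitively $n$-universal lattice, the minimal rank $u^\ast_R(n)$ is at least $2n$.

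I do not expect a genuine obstacle in this final step, because the real work has been front-loaded into the preceding corollary---equivalently, into the Hensel-type perturbation lemma that upgrades a primitive sublattice with near-isotropic Gram matrix to a genuinely totally isotropic direct summand. Were one to prove the theorem from scratch, the crux would be exactly that perturbation: passing from a primitive $A$ with $A^t M_M A$ small to a primitive $A'$ with $A'^t M_M A'=0$, which is where Theorem \ref{con2} and the nondegeneracy of $M$ (through $\det M_M$) are essential. Having the corollary in hand, however, the present statement reduces to the bookkeeping above.
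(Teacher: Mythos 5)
Your proposal is correct and is essentially the paper's own argument: the paper proves the theorem in one line as ``a direct consequence of the corollary given above,'' i.e.\ exactly the implication $(2)\Rightarrow(1)$ that you invoke, with the hyperbolic-splitting fact $\ind FM\ge n\Rightarrow \rank M\ge 2n$ already recorded in the introduction. You merely make explicit the choice of a test lattice $N\cong\langle\pi^k,\dotsc,\pi^k\rangle$ with $\ks N\subseteq 4(dM)^2\kp$, which the paper leaves implicit; this is a harmless (indeed clarifying) elaboration, not a different route.
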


\begin{proof}
This is a direct consequence of the corollary given above.
\end{proof}

\begin{cor}\label{cor:corindgen}
 Let $M$ be a primitively $n$-universal $R$-lattice of rank $m\ge 2n$. We have the followings:

\noindent \textup{(a)} If $m = 2n$, then $FM$ is hyperbolic.

\noindent \textup{(b)} If $m = 2n+1$, then $FM \cong \bbH^n\mathbin{\perp}\ang{(-1)^n dM}$.

\noindent \textup{(c)} If $m = 2n+2$ and $dM = (-1)^{n+1}$, then $FM$ is hyperbolic.
\end{cor}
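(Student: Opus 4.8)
The plan is to build everything on Theorem \ref{thm:indgen}, which already supplies the crucial input $\ind FM \ge n$. As recalled in the introduction, a Witt index of at least $n$ means that $FM$ is orthogonally split by a $2n$-dimensional hyperbolic space, so I would begin by writing
\[
 FM \cong \bbH^n \perp W, \qquad \dim W = m - 2n \in \{0, 1, 2\},
\]
and then identify the residual space $W$ in each case from its dimension and its discriminant. Here I would use throughout that the space discriminant $d(FM) \in F/(F^\times)^2$ is the image of the lattice discriminant $dM$, that $d\bbH = -1$ and hence $d\bbH^n = (-1)^n$, and that the discriminant is multiplicative under $\perp$.

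Part (a) is then immediate: $m = 2n$ forces $W = 0$, so $FM \cong \bbH^n$ is hyperbolic. For part (b), $\dim W = 1$, say $W \cong \ang{a}$, and comparing discriminants gives $(-1)^n a \equiv dM$ in $F/(F^\times)^2$; multiplying by $(-1)^n$ yields $a \equiv (-1)^n dM$, and since a one-dimensional space is determined by its discriminant modulo squares, $W \cong \ang{(-1)^n dM}$, which is the claimed splitting. For part (c), $\dim W = 2$, and the hypothesis $dM = (-1)^{n+1}$ gives $d(FM) = (-1)^{n+1}$, so from $(-1)^n\, dW = (-1)^{n+1}$ we get $dW = -1$. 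A binary nondegenerate space of discriminant $-1$ is isotropic, hence isometric to $\bbH$, and therefore $FM \cong \bbH^n \perp \bbH = \bbH^{n+1}$ is hyperbolic.

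I do not expect a genuine obstacle here, since the corollary is essentially a translation of the inequality $\ind FM \ge n$ into explicit isometry types, and the only facts used beyond Theorem \ref{thm:indgen} are the standard local structural results that one-dimensional spaces are classified by their discriminant and that a binary space is hyperbolic exactly when its discriminant equals $-1$. The single point that demands attention is the discriminant bookkeeping modulo squares, in particular the passage from the lattice discriminant $dM \in F/(R^\times)^2$ to the space discriminant $d(FM) \in F/(F^\times)^2$ and the careful tracking of the factor $(-1)^n$ coming from $d\bbH^n$; once this is handled the argument reduces to the short case analysis above.
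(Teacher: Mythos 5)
Your proposal is correct and follows exactly the route the paper intends: the corollary is stated as an immediate consequence of Theorem \ref{thm:indgen} (that $\ind FM \ge n$ forces $FM \cong \bbH^n \perp W$), and your case analysis on $\dim W \in \{0,1,2\}$ via discriminants, together with the standard facts that unary spaces are classified by their discriminant and that a binary space is hyperbolic precisely when its discriminant is $-1$, supplies precisely the routine details the paper leaves to the reader. The discriminant bookkeeping, including the passage from $dM \in F/(R^\times)^2$ to $d(FM) \in F/(F^\times)^2$ and the factor $(-1)^n$ from $d\bbH^n$, is handled correctly.
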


As a direct consequence of the above corollary, any primitively $n$-universal $R$-lattice of rank $2n$ is an $n$-universal $R$-lattice on the hyperbolic space $\bbH^n$.  Furthermore, any primitively $n$-universal $R$-lattice $M$ of rank $2n+1$ is an $n$-universal $R$-lattice on the space $\bbH^n\mathbin{\perp}\ang{(-1)^n dM}$.

\begin{rmk}
Suppose that an $R$-lattice $M$ has a Jordan splitting $M = M_1 \mathbin{\perp} \dotsb \mathbin{\perp} M_r$, where $R \supseteq \ks M_1 \supsetneq \cdots \supsetneq \ks M_r$. It can be shown that if $M$ primitively represents an $R$-lattice $N$ of rank $n$ with $\ks N \subsetneq \ks M_r$, then $\rank M\ge 2n$ \textup(for the proof, see \textup{\cite[Lemma~5.5]{CO23})}. 
\end{rmk}

\section{Minimal rank of P$n$U lattices over nondyadic local rings}
\label{sec:podd}

In this section, we prove that $u^\ast_R(n) = 2n$ for any nondyadic local ring $R$ and any positive integer $n$. Furthermore, it is shown that, up to isometry, there exist exactly one primitively $2$-universal lattice of rank $4$ and exactly two primitively $3$-universal lattices of rank $6$ over any nondyadic local ring.

Recall that $\bbH\cong \left(\begin{smallmatrix}0&1\\1&0\end{smallmatrix}\right)$ is the even unimodular $R$-lattice on the hyperbolic plane. Let $x, y$ be a basis for $\bbH$ such that $\bbH\cong \left(\begin{smallmatrix}0&1\\1&0\end{smallmatrix}\right)$ in this basis. Then, $Q(\alpha x + \beta y) = 2\alpha\beta$ for any $\alpha$, $\beta\in R$. Now, the proof of the following lemma is quite straightforward.

\begin{lem}\label{lem:easypnu}
\textup{(a)} The $R$-lattice $\bbH$ primitively represents all even integers. In particular, if $R$ is nondyadic, then $\bbH$ is primitively $1$-universal.

\noindent \textup{(b)} If an $R$-lattice $J$ primitively represents a lattice $K$ of rank $k$, then $\bbH\mathbin{\perp} J$ primitively represents all lattices of rank $k+1$ of the form $\ang{\alpha}\mathbin{\perp} K$ for any $\alpha\in 2R$. In particular, if $R$ is nondyadic, then $\bbH^n$ is primitively $n$-universal.
\end{lem}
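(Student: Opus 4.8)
The plan is to handle the two parts in turn, with part (b) resting on part (a) together with the behaviour of direct summands under orthogonal splitting. For (a), I would argue directly from the formula $Q(\alpha x + \beta y) = 2\alpha\beta$. Given an even integer $2c$ with $c\in R$, the vector $v = x + cy$ satisfies $Q(v) = 2c$, and $v$ is primitive in $\bbH$ because its $x$-coordinate is the unit $1$; equivalently, $Rv$ is a direct summand of the unimodular lattice $\bbH$. This exhibits a primitive representation of $\ang{2c}$, so $\bbH$ primitively represents every even integer. When $R$ is nondyadic, $2$ is a unit, so every $\alpha\in R$ is even (write $\alpha = 2(\alpha/2)$ with $\alpha/2\in R$); hence every rank-one lattice $\ang{\alpha}$ is primitively represented, and $\bbH$ is primitively $1$-universal.

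For (b), suppose $\sigma: K\to J$ is a primitive representation, so that $\sigma(K)$ is a direct summand of $J$. Given $\alpha\in 2R$, part (a) furnishes a primitive vector $w\in\bbH$ with $Q(w) = \alpha$. I would define $\tau : \ang{\alpha}\mathbin{\perp} K \to \bbH\mathbin{\perp} J$ by sending the generator of the $\ang{\alpha}$-summand to $w$ and letting $\tau$ agree with $\sigma$ on $K$. Since $w$ lies in the $\bbH$-component and $\sigma(K)$ in the $J$-component, the orthogonality relations defining $\ang{\alpha}\mathbin{\perp} K$ are matched automatically by the orthogonal splitting of $\bbH\mathbin{\perp} J$, while the two diagonal blocks are preserved because $Q(w) = \alpha$ and $\sigma$ is an isometry; thus $\tau$ is a representation. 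The step requiring care is primitivity: the image of $\tau$ is $Rw\oplus\sigma(K)$, and since $Rw$ is a direct summand of $\bbH$ and $\sigma(K)$ is a direct summand of $J$, their orthogonal sum is a direct summand of $\bbH\mathbin{\perp} J$. Hence $\tau$ is primitive.

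The final assertion then follows by induction on $n$, using the standard fact that over a nondyadic local ring every lattice is diagonalizable. Writing an arbitrary rank-$n$ lattice as $\ang{\alpha_1, \dotsc, \alpha_n}$ with each $\alpha_i\in 2R$ (again because $2$ is a unit), I would take the base case $n=1$ from (a) and, in the inductive step, apply (b) with $J = \bbH^{n-1}$ and $K = \ang{\alpha_2, \dotsc, \alpha_n}$. I expect the only genuinely delicate point to be the primitivity verification in (b); everything else is routine once one observes that an orthogonal sum of direct summands taken across an orthogonal splitting is again a direct summand.
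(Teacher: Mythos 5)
Your proof is correct and follows exactly the route the paper intends: the paper omits the proof as ``quite straightforward'' after recording $Q(\alpha x + \beta y) = 2\alpha\beta$, which is precisely the computation you use for (a), and your primitivity check in (b) via the module-theoretic fact that an orthogonal sum of direct summands across the splitting $\bbH\mathbin{\perp} J$ is again a direct summand is the intended argument. The concluding induction using diagonalizability over nondyadic $R$ and the invertibility of $2$ is likewise the standard step the paper takes for granted.
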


The above lemma shows that, for any nondyadic local ring $R$, $\bbH^n$ is a primitively $n$-universal $R$-lattice of rank $2n$. By combining it with Corollary~\ref{cor:corindgen}, we conclude that the minimal rank of primitively $n$-universal quadratic $R$-lattices is exactly $2n$.

\begin{prop}
Let $R$ be a nondyadic local ring and let $\pi\in R$ be a prime.

\noindent \textup{(a)} Any primitively $2$-universal $R$-lattice of rank $4$ is isometric to $\bbH^2$.

\noindent \textup{(b)} Any primitively $3$-universal $R$-lattice of rank $6$ is isometric to either $\bbH^3$ or $\bbH^2\mathbin{\perp} \pi\bbH$.
\end{prop}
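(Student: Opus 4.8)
The plan is to treat both parts by the same scheme: reduce to the hyperbolic space via Corollary~\ref{cor:corindgen}, then analyze the Jordan splitting of $M$ modulo $\pi$. Since $M$ has rank $2n$ (with $n=2$ in (a), $n=3$ in (b)), Corollary~\ref{cor:corindgen}(a) gives $FM\cong\bbH^n$. As $M$ primitively represents $\langle1,\dots,1\rangle$ it has a vector of unit norm, so $\ks M=R$; write a Jordan splitting $M=L_0\mathbin{\perp}\pi^{e_1}L_1\mathbin{\perp}\cdots$ with $L_0$ unimodular of rank $r_0\ge1$ and $0<e_1<\cdots$. Reducing modulo $\pi$, the nondegenerate part of the $k$-quadratic space $\oline M=M/\pi M$ (where $k=R/\pi R$) is $\oline{L_0}$, the rest being the radical. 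A primitive representation $\sigma\colon N\hookrightarrow M$ of a unimodular rank-$n$ lattice $N$ reduces to an isometric embedding of the nondegenerate space $\oline N$ into $\oline{L_0}$; since there are exactly two isometry classes of unimodular rank-$n$ lattices and $\oline{L_0}$ must contain both, a rank count forces $r_0\ge n+1$. At the top, if $r_0=2n$ then $M=L_0$ is unimodular on $\bbH^n$, and over a nondyadic ring a unimodular lattice is determined by its rank and discriminant, so $M\cong\bbH^n$.

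For (a) this leaves only $r_0=3$ to exclude. Here $M=L_0\mathbin{\perp}\langle\pi^e\theta\rangle$ with $L_0$ unimodular ternary, and $FM\cong\bbH^2$ forces $e$ even with $e\ge2$. I would show $M$ fails to primitively represent $\pi\bbA$. A primitive representation reduces to a $2$-dimensional totally isotropic subspace $W\subseteq\oline M$; its projection to $\oline{L_0}$ is totally isotropic, hence at most $1$-dimensional since $\oline{L_0}$ has Witt index $1$, so $W$ meets the $1$-dimensional radical. This produces a primitive vector $v\in\sigma(\pi\bbA)$ whose reduction lies in the radical, whence $Q(v)\in\pi^2R$ as $e\ge2$; but $\pi\bbA$ is anisotropic at scale $\pi$, so every primitive vector has norm in $\pi R\smallsetminus\pi^2R$ — a contradiction. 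Thus $r_0=3$ is impossible and $M\cong\bbH^2$.

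For (b), $r_0=6$ gives $\bbH^3$, and it remains to treat $r_0\in\{4,5\}$; the new subtlety is that the obstruction must also track the square class of the surviving anisotropic unit vector. For a lattice of the shape $M=L_0\mathbin{\perp}\langle\nu\rangle\mathbin{\perp}\langle\pi^e\theta\rangle$ (the $r_0=5$ case, and the ``thin'' $r_0=4$ cases where the scale-$\pi$ part has rank $\le1$) I would test $N=\langle u_0\rangle\mathbin{\perp}\pi\bbA$ with anisotropic $\pi$-part. As in (a), the reduction of $\sigma(\pi\bbA)$ cannot meet the radical (that would again force a norm in $\pi^2R$), so it projects onto a maximal totally isotropic subspace of $\oline{L_0}$; the reduction of $v_0$ must then be anisotropic and orthogonal to it, and in $\oline{L_0}$ such a vector is pinned to the square class of the leftover unit $\nu$. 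Choosing $u_0$ with $u_0/\nu\notin(R^\times)^2$ yields a ternary that is not primitively represented, excluding all these shapes. The only surviving $r_0=4$ possibility is the one whose $\pi$-modular component is $\pi\bbH$ (rank $2$, isotropic, scale exactly $\pi$): then the radical is ``fat'' and carries a $\pi$-modular hyperbolic form, so $\pi\bbA$ can be realized using one radical direction together with the isotropic structure of $L_0$, freeing $L_0$ to supply $v_0$ of either class. Once the $\pi$-modular part is $\pi\bbH$, comparing spaces gives $FL_0\cong\bbH^2$, whence $L_0\cong\bbH^2$ and $M\cong\bbH^2\mathbin{\perp}\pi\bbH$.

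The hard part will be the converse, namely verifying that $\bbH^2\mathbin{\perp}\pi\bbH$ is genuinely primitively $3$-universal; this does \emph{not} reduce to (a), because $\bbH\mathbin{\perp}\pi\bbH$ is a rank-$4$ lattice not isometric to $\bbH^2$ and hence, by (a), is not primitively $2$-universal. Instead I would argue directly with Lemma~\ref{lem:easypnu}(b). Writing $\bbH^2\mathbin{\perp}\pi\bbH=\bbH\mathbin{\perp}(\bbH\mathbin{\perp}\pi\bbH)$, one checks that $\bbH\mathbin{\perp}\pi\bbH$ primitively represents the unimodular hyperbolic plane $\bbH$ and every binary lattice having an entry of scale $\pi$ (placing the unit part in $\bbH$ and the $\pi$-part in $\pi\bbH$). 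It then suffices to write each ternary $N$ as $\langle\alpha\rangle\mathbin{\perp}K$ with $K$ of one of these forms: if $N$ has a nontrivial $\pi$-modular part one splits off a unit, leaving a $K$ with a $\pi$-scale entry, while if $N$ is unimodular it is isotropic and splits off $\bbH$. The delicate bookkeeping — choosing the splitting of each $N$ so as never to require $\bbH\mathbin{\perp}\pi\bbH$ to represent an anisotropic unimodular binary — is, together with the square-class tracking in the necessity argument, where I expect the real work to lie.
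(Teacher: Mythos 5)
Your overall strategy differs from the paper's: the paper disposes of both parts in a few lines by noting that a primitively $n$-universal lattice is in particular $n$-universal, invoking Corollary~\ref{cor:corindgen} together with the classification of $2$- and $3$-universal lattices on $\bbH^n$ from \cite[Propositions~3.3 and 3.4]{HHX23}, and then checking that $\bbH^2\mathbin{\perp}\pi\bbH$ is primitively $3$-universal; you instead attempt a self-contained Jordan-splitting analysis. Your part (a) is correct, and your sufficiency sketch for $\bbH^2\mathbin{\perp}\pi\bbH$ is also fine and easier than you fear: every ternary is diagonalizable over a nondyadic ring, $2R=R$ makes Lemma~\ref{lem:easypnu}(b) split off an arbitrary $\ang{\alpha}$, and a unimodular ternary is split by $\bbH$, so no delicate bookkeeping is needed.

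The genuine gap is in the necessity direction of (b), in the ``thin'' case where the Jordan splitting has a rank-one component of scale exactly $\pi$, say $M=L_0\mathbin{\perp}\ang{\pi\theta_1}\mathbin{\perp}\ang{\pi^b\theta_2}$ with $L_0$ unimodular of rank $4$ (here the discriminant of $\bbH^3$ forces $b$ odd, $b\ge3$, so there is no analogue of the condition $e\ge2$ that saved you when $r_0=5$). Your key claim --- that the reduction of $\sigma(\pi\bbA)$ cannot meet the radical because that would force a norm in $\pi^2R$ --- fails here: a primitive vector $v=x_0+c_1e_5+c_2e_6$ reducing into the radical direction $\oline{e_5}$ has $Q(v)\equiv\pi\theta_1c_1^2\pmod{\pi^2}$, which lies in $\pi R^\times$, exactly what a primitive vector of $\pi\bbA$ requires. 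In fact your test lattice is primitively represented for \emph{both} square classes of $u_0$: since $\oline{L_0}$ is a nondegenerate quaternary space over the residue field $k=R/\pi R$, it is isotropic, so there is a primitive $x\in L_0$ with $Q(x)=-\pi\theta_1\Delta$ exactly (lift an isotropic reduction and adjust using a hyperbolic partner, which is available because $L_0$ is unimodular); then $P=R[x,e_5]\cong\ang{-\pi\theta_1\Delta,\pi\theta_1}\cong\pi\bbA$ is a primitive sublattice of $M$. Moreover $\oline{x}^\perp\subseteq\oline{L_0}$ contains a nondegenerate binary subspace, and any nondegenerate binary form over $k$ represents all of $k$, so for \emph{any} unit $u_0$ one finds $y\in L_0\cap x^\perp$ with $Q(y)=u_0$ and $R[y]\mathbin{\perp}P\cong\ang{u_0}\mathbin{\perp}\pi\bbA$ primitive in $M$. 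Thus this shape survives all the obstructions you propose and must be excluded by a different test lattice; such lattices are in fact not even $3$-universal, which is precisely why the paper can quote \cite{HHX23} rather than run this case analysis. A second, smaller omission: you never rule out the shape $M=L_0\mathbin{\perp}\pi\bbA$ (a rank-two anisotropic $\pi$-modular component); it is excluded, but not by your test --- its Hasse symbol differs from that of $\bbH^3$, so $FM$ is not hyperbolic and Corollary~\ref{cor:corindgen} already forbids it, a point your sketch should make explicit.
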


\begin{proof}
(a) According to \cite[Proposition~3.3]{HHX23} and Corollary~\ref{cor:corindgen}, any $2$-universal quaternary lattice on the hyperbolic space $\bbH^2$ is isometric to $I_4\cong \bbH^2$. (b) According to \cite[Proposition~3.4]{HHX23} and Corollary~\ref{cor:corindgen}, any $3$-universal senary lattice on the hyperbolic space $\bbH^3$ is isometric to either $\bbH^3$ or $\bbH^2\mathbin{\perp} \pi\bbH$. It can be easily shown that $\bbH^2\mathbin{\perp} \pi\bbH$ is also primitively $3$-universal.
\end{proof}

\section{Minimal rank of P$n$U non-classically integral lattices over unramified dyadic local rings}
\label{sec:peven}

Hereafter, we assume that $R$ is unramified dyadic. Recall that an $R$-lattice $L$ is called non-classically integral if $\kn L\subseteq R$. A non-classically integral lattice is called primitively $n$-universal if it primitively represents all non-classically integral lattices of rank $n$. In this section, we prove that the minimal rank of primitively $n$-universal non-classically integral lattices is $2n$. Furthermore, it can be shown that, up to isometry, there exists exactly one primitively $2$-universal non-classically integral lattice of rank $4$.

Clearly, the study of primitively $n$-universal non-classically integral lattices is equivalent to the study of even lattices that primitively represent all even lattices of rank $n$. The following is a supplement of Lemma \ref{lem:easypnu} for even lattices over unramified dyadic local ring $R$.

\begin{lem}\label{lem:easypnueven}
\textup{(a)} If an $R$-lattice $J$ is isotropic, then $\bbH\mathbin{\perp} J$ primitively represents all binary lattices of the form $2^a \bbH$ for any nonnegative integer $a$.

\noindent \textup{(b)} If an $R$-lattice $J$ primitively represents $2^{a+1}\epsilon$ for a nonnegative integer $a$ and a unit $\epsilon\in R$, then $\bbH\mathbin{\perp} J$ primitively represents all binary lattices of the form $2^a \bbH$ and $2^a \bbA$. In particular, $\bbH^2$ primitively represents all binary lattices of the form $2^a \bbH$ and $2^a \bbA$ for any nonnegative integer $a$. Hence, $\bbH^n$ primitively represents all even lattices of rank $n$.
\end{lem}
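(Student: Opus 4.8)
The plan is to exhibit all the required primitive representations by hand. Throughout, let $x,y$ be the standard basis of the distinguished copy of $\bbH$, so that $Q(x)=Q(y)=0$, $B(x,y)=1$, and every vector of $J$ is orthogonal to both $x$ and $y$. In each case I will write down two vectors $u,v$ whose Gram matrix is the prescribed one, and verify primitivity by checking that the coordinate matrix of $(u,v)$ has a $2\times 2$ minor that is a unit. For part (a), since $J$ is isotropic it contains a primitive isotropic vector $w$ (any nonzero isotropic vector of $FJ$ rescales to a primitive vector of $J$, still of norm $0$). Taking $u=x$ and $v=2^a y+w$ gives $B(u,u)=0$, $B(u,v)=2^a$, $B(v,v)=Q(w)=0$, i.e.\ Gram matrix $2^a\bbH$; the minor formed from the $x$-row (where $u$ has the unit coordinate $1$) and a row in which $w$ has a unit coordinate is itself that unit, so the representation is primitive.

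For part (b), fix a primitive $w\in J$ with $Q(w)=2^{a+1}\epsilon$. The form $2^a\bbH$ is handled exactly as in (a): take $u=x$ and $v=-\epsilon x+2^a y+w$, for which $Q(u)=0$, $B(u,v)=2^a$, and $Q(v)=-2^{a+1}\epsilon+2^{a+1}\epsilon=0$, with primitivity again coming from the unit coordinate of $w$. For $2^a\bbA$ it suffices, since $w$ is primitive in $J$, to produce a primitive representation into the sublattice $\bbH\perp Rw\cong\bbH\perp\ang{2^{a+1}\epsilon}$, which is a direct summand of $\bbH\perp J$. Here I would try $u=2^ax+y$ and $v=-2^at\,x+(t+1)y+\gamma w$; a direct computation gives $Q(u)=2^{a+1}$, $B(u,v)=2^a$, and $Q(v)=2^{a+1}\bigl(\epsilon\gamma^2-t^2-t\bigr)$, so the Gram matrix equals $2^a\bbA\cong\left(\begin{smallmatrix}2^{a+1}&2^a\\2^a&2^{a+1}\rho\end{smallmatrix}\right)$ precisely when
\[
 t^2+t+\rho=\epsilon\gamma^2 .
\]

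The solvability of this last equation is the crux of the proof, and it is where the hypothesis on $\Delta=1-4\rho$ is used. Writing $k=R/2R$ for the (finite, characteristic $2$) residue field, the point is that $\Delta$ is a nonsquare unit if and only if $z^2+z+\rho$ has no root in $R$, which by Hensel's lemma and Artin--Schreier theory is equivalent to $\trace_{k/\f_2}(\bar\rho)=1$. Consequently $t^2+t+\rho$ is a unit for every $t\in R$, and as $\bar t$ runs over $k$ the reductions $\bar t^2+\bar t+\bar\rho$ fill exactly the trace-one classes; since $\bar\gamma\mapsto\bar\epsilon\bar\gamma^2$ is a bijection of $k$, the reduced equation has a solution with $\bar\gamma\neq 0$, and this lifts by Hensel's lemma because $\partial_t(t^2+t+\rho-\epsilon\gamma^2)=2t+1$ is a unit. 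Any solution automatically has $\gamma$ a unit (the left-hand side is odd), so the $2\times2$ minor of the coordinate matrix formed from the $y$-row and the $w$-row equals $\gamma$, a unit, and the representation is primitive. I expect this matching of values in the residue field to be the main obstacle; the remaining steps are routine.

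Finally, for the two concluding assertions: applying (b) with $J=\bbH$, which primitively represents the even integer $2^{a+1}$ by Lemma~\ref{lem:easypnu}(a), shows that $\bbH^2$ primitively represents every $2^a\bbH$ and every $2^a\bbA$. For the last claim I would induct on the rank, using the structure theory of lattices over unramified dyadic local rings (see \cite{OM}): an even $R$-lattice $L$ of rank $n$ is an orthogonal sum of pieces of the form $\ang{2^a\epsilon}$ with $a\ge 1$ and $2^a\bbH$, $2^a\bbA$ (evenness forces $a\ge1$ on the rank-one pieces). If $L$ has a rank-one summand $\ang{\alpha}$ with $\alpha\in 2R$, split it off and apply Lemma~\ref{lem:easypnu}(b) to the rank-$(n-1)$ remainder; if $L$ has a summand $2^a\bbH$ or $2^a\bbA$, split it off, represent it primitively using the $\bbH^2$ case just established, and combine with a primitive representation of the rank-$(n-2)$ remainder, using that an orthogonal sum of primitive representations into orthogonal direct summands is again primitive. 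In either case the rank drops, completing the induction and showing that $\bbH^n$ primitively represents all even lattices of rank $n$.
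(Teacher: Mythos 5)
Your argument is correct, and on part (a) and the $2^a\bbH$ portion of (b) it coincides with the paper's proof: the paper exhibits $R[x,\,2^a y+z]\cong 2^a\bbH$ and $R[\epsilon^{-1}x,\,-x+2^a\epsilon y+z]\cong 2^a\bbH$, with the same primitivity check via the unit coordinate of the primitive vector $z\in J$. Where you genuinely diverge is the $2^a\bbA$ case. The paper simply takes $R[x+2^a\epsilon y,\,x+z]$, whose Gram matrix is $2^a\epsilon\left(\begin{smallmatrix}2&1\\1&2\end{smallmatrix}\right)$, identifies this lattice with $2^a\epsilon\bbA$, and finishes with the unit-scaling isometry $2^a\epsilon\bbA\cong 2^a\bbA$ of \cite[93:11]{OM} --- no residue-field computation at all. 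You instead produce the Gram matrix $\left(\begin{smallmatrix}2^{a+1}&2^a\\2^a&2^{a+1}\rho\end{smallmatrix}\right)$ exactly, at the cost of solving $t^2+t+\rho=\epsilon\gamma^2$, which you correctly settle by Hensel plus additive Artin--Schreier theory (the left side sweeps the trace-one classes of $k=R/2R$, the right side sweeps all of $k$, and unitness of $\gamma$ is automatic because $t^2+t+\rho$ is a unit for every $t$); I verified each of these steps. The trade-off is instructive: the paper's construction is a two-line matrix computation, but the identification $\left(\begin{smallmatrix}2&1\\1&2\end{smallmatrix}\right)\cong\bbA$ holds precisely when $-3$ is a nonsquare unit, i.e.\ when the residue degree is odd (in particular over $\z_2$, where the paper later fixes $\rho=1$); when $\f_4\subseteq k$ that binary lattice is isotropic, hence isometric to $\bbH$, and the paper's displayed pair of vectors only reproduces $2^a\bbH$. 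Your Artin--Schreier twist is exactly what makes the claim uniform over all unramified dyadic $R$, and it is in the same spirit as the vector $\rho\epsilon^{-1}e_{2n-3}+\alpha e_{2n-2}$ that the paper itself deploys for the $\alpha\bbA$ components in the proof of Lemma~\ref{lem:easypnucap} (where one can also bypass Hensel by noting that a binary even $2^a$-modular lattice of discriminant $-4^a\Delta$ must be $2^a\bbA$). Finally, your closing induction --- splitting an even lattice into $\ang{2^a\epsilon}$ ($a\ge1$), $2^a\bbH$, $2^a\bbA$ pieces, handling rank-one pieces by Lemma~\ref{lem:easypnu}(b) and binary pieces by the $\bbH^2$ case, using that orthogonal sums of primitive representations into orthogonal summands are primitive --- is precisely the content the paper compresses into the sentence ``these facts combined with Lemma~\ref{lem:easypnu} imply the lemma.''
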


\begin{proof}
We let $x, y$ be a basis for $\bbH$ such that $Q(\alpha x + \beta y) = 2\alpha\beta$ for any $\alpha$, $\beta\in R$. (a) Since $J$ is isotropic, there is a primitive vector $z\in J$ with $Q(z) = 0$. Then, we have $R[x, 2^a y + z] \cong 2^a \bbH$. (b) Let $z\in J$ be a primitive vector with $Q(z) = 2^{a+1}\epsilon$. Then, we have
\[
 R[\epsilon^{-1} x, - x + 2^a\epsilon y + z] \cong 2^a \bbH \quad\text{and}\quad
 R[x + 2^a\epsilon y, x + z] \cong 2^a\epsilon \bbA\text.
\]
Note that $2^a\epsilon \bbA \cong 2^a \bbA$ by \cite[93:11]{OM}. These facts combined with Lemma~\ref{lem:easypnu} imply the lemma.
\end{proof}

The above lemma shows that $\bbH^n$ is an $R$-lattice of rank $2n$ that primitively represents all even lattices of rank $n$. By combining it with Corollary~\ref{cor:corindgen}, we conclude that the minimal rank of even $R$-lattices that primitively represents all even $R$-lattices is exactly $2n$.

\begin{prop}
Let $R$ be an unramified dyadic local ring. Any even $R$-lattice of rank $4$ that primitively represents all even $R$-lattices of rank $2$ is isometric to $\bbH^2$.
\end{prop}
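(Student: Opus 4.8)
The plan is to identify $FM$ as the hyperbolic space and then pin down $M$ on it. Write $M$ for the given rank-$4$ even lattice. Since $M$ primitively represents every even binary lattice, it in particular primitively represents $2^a\bbH$ for any $a$, and for $a$ large this is an even rank-$2$ lattice of scale contained in $4(dM)^2\kp$. By the corollary characterizing $\ind FM\ge n$ stated just before Theorem~\ref{thm:indgen}, this yields $\ind FM\ge 2$; as $\dim FM=4$ we get $\ind FM=2$, so $FM$ is hyperbolic (cf.\ Corollary~\ref{cor:corindgen}(a)). Next, $\bbH$ is even unimodular and is primitively represented by $M$, so its image is a unimodular direct summand and therefore splits off orthogonally; thus $M\cong\bbH\mathbin{\perp}M_1$ with $M_1$ an even binary lattice, and since the orthogonal complement of a hyperbolic plane in $\bbH^2$ is again a hyperbolic plane, $FM_1\cong F\bbH$.

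The crux is to show that $M_1$ is unimodular. Suppose instead that $\ks M_1\subseteq 2R$. Then the scale-$R$ (unimodular) Jordan constituent of $M$ is exactly the factor $\bbH$: it has rank $2$ and, reduced modulo $\kp$, is a hyperbolic $\mathbb{F}_q$-quadratic space, i.e.\ has trivial Arf invariant. On the other hand $M$ also primitively represents the even unimodular lattice $\bbA$; being unimodular, $\bbA$ is again an orthogonal direct summand, so $M\cong\bbA\mathbin{\perp}M_2$ with $M_2$ even binary. Because the rank of each Jordan constituent is an isometry invariant and the unimodular rank of $M$ equals $2$, the lattice $M_2$ must be non-unimodular and the unimodular constituent is now $\bbA$, whose reduction modulo $\kp$ is anisotropic, i.e.\ has nontrivial Arf invariant. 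Over an unramified dyadic ring the reduction of the unimodular Jordan constituent to an $\mathbb{F}_q$-quadratic space is an isometry invariant of $M$; its Arf invariant cannot be both trivial and nontrivial, a contradiction. Hence $M_1$ is unimodular.

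Finally, $M_1$ is an even unimodular binary lattice lying on the isotropic space $F\bbH$. The only even unimodular binary lattices are $\bbH$ and $\bbA$, and $\bbA$ is anisotropic, hence cannot sit on $F\bbH$; therefore $M_1\cong\bbH$ and $M\cong\bbH\mathbin{\perp}\bbH=\bbH^2$. The step I expect to be the main obstacle is the one just carried out, namely excluding a non-unimodular $M_1$: over $F$ there is no obstruction, since both $\bbH\mathbin{\perp}M_1$ and $\bbA\mathbin{\perp}M_2$ can have hyperbolic ambient space, so the contradiction has to be produced integrally. I would secure it through the isometry-invariance of the $\mathbb{F}_q$-reduction of the unimodular constituent (the Arf-invariant argument above); a more computational alternative is to take a hypothetical primitive copy of $\bbA$, project its two generators onto the split $\bbH$, and use $\ks M_1\subseteq 2R$ to compare Gram determinants modulo $8$, where the norm of $M_1$ must be tracked carefully.
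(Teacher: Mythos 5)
Your steps 1, 2 and 4 are sound and the first of them coincides with the paper's own starting point (Corollary~\ref{cor:corindgen} forces $FM\cong\bbH^2$, and an even unimodular sublattice such as a primitive copy of $\bbH$ or $\bbA$ automatically splits an integral lattice, by \cite[82:15]{OM}). But the crux step --- excluding $\ks M_1\subseteq 2R$ --- rests on a claim that is false: over an unramified dyadic ring, the isometry class (equivalently, the Arf invariant of the $\f_q$-reduction) of the \emph{even unimodular Jordan constituent} is \emph{not} an isometry invariant of an even lattice. Explicit counterexample over $\z_2$ (with $\rho=1$, $\bbA\cong\left(\begin{smallmatrix}2&1\\1&2\end{smallmatrix}\right)$): in $L=\bbH\mathbin{\perp}\ang{2,-2}$ with basis $e_1,e_2,f_1,f_2$, the vectors $x=e_1+e_2$ and $y=e_1+f_1$ satisfy $Q(x)=Q(y)=2$, $B(x,y)=1$, so $\z_2[x,y]\cong\bbA$ is unimodular and splits $L$; its orthogonal complement is $\z_2[2e_1-2e_2+f_1,\,f_2]\cong\ang{-6,-2}$, whence $\bbH\mathbin{\perp}\ang{2,-2}\cong\bbA\mathbin{\perp}\ang{-2,-6}$. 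This lattice is even, lies on the hyperbolic space $\bbH^2$, and has two Jordan splittings whose unimodular constituents are $\bbH$ and $\bbA$ respectively --- exactly the configuration your Arf argument declares impossible. The reason your invariance heuristic fails here is that the quadratic form $\frac12 Q \bmod 2$ is not well defined on $M/(\kp\text{-radical})$ when $\kn M_1=2R$, i.e.\ when $M_1$ is \emph{proper} $2$-modular such as $\ang{2,-2}$; in Conway--Sloane language, a type~I constituent at the adjacent scale enables sign walking, and canonicity of the $2$-adic symbol requires all constituents to be of type~II. (Your claim is correct when $\kn M_1\subseteq 4R$, but that is precisely the case you cannot assume.)

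The gap is not patchable within your framework: since the counterexample $L$ above is split by both $\bbH$ and $\bbA$, it primitively represents both, so no argument using only primitive representation of $\bbH$ and $\bbA$ --- including your fallback plan of projecting a primitive copy of $\bbA$ onto the split $\bbH$ and comparing Gram determinants modulo $8$ --- can force $M\cong\bbH^2$. One must feed in the representation of further even binary lattices ($2\bbH$, $2\bbA$, $\ang{2\epsilon,2\delta}$, $4\bbH$, \dots) to eliminate candidates such as $\bbH\mathbin{\perp}\ang{2,-2}$; carried out systematically, this amounts to the classification of quaternary lattices on $\bbH^2$ representing all even binary lattices, which is exactly what the paper invokes: its proof consists of Corollary~\ref{cor:corindgen} together with the citation of \cite[Theorem~1.2]{HH23}, which says that any such lattice is isometric to $\bbH^2$. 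To repair your write-up, either quote that theorem at the point where you currently invoke the Arf invariance, or replace the Arf argument by a case analysis showing that each lattice $\bbH\mathbin{\perp}M_1$ with $\ks M_1\subseteq 2R$ fails to primitively represent some specific even binary lattice.
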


\begin{proof}
According to \cite[Theorem~1.2]{HH23} and Corollary~\ref{cor:corindgen}, any lattice on the hyperbolic space $\bbH^2$ that represents all even lattices of rank $2$ is isometric to $\bbH^2$.
\end{proof}

\section{Minimal rank of P$n$U classically integral lattices over unramified dyadic local rings $R\ne\z_2$}
\label{sec:pnotz2}

In this section, we continue to assume that the local ring $R$ is unramified dyadic. And we prove that
\[
 \text{If\, $R \ne \z_2$,\, then\, $u^\ast_R(2) = 5$\, and\, $u^\ast_R(n) = 2n$\, for\, $n\ge 3$.}
\]
The following are supplements of Lemma \ref{lem:easypnu} for general lattices over $R$. We mean by $Q^\ast(L)$ the set of all $Q(v)$'s, where $v$ runs over all primitive vectors in $L$.

\begin{lem}\label{lem:1-1}
Let $L\cong\ang{1, -1}$ be a binary $R$-lattice. Then, we have
\[
 Q(L) = R^\times \cup 4R\text, \qquad Q^\ast(L) = \begin{cases}
  Q(L) & \text{if $R\ne \z_2$,}\\
  R^\times \cup 8R & \text{if $R = \z_2$.}
 \end{cases}
\]
\end{lem}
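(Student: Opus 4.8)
The plan is to work with a fixed basis $e_1, e_2$ of $L$ with $Q(e_1) = 1$ and $Q(e_2) = -1$, so that for $v = \alpha e_1 + \beta e_2$ one has $Q(v) = \alpha^2 - \beta^2 = (\alpha+\beta)(\alpha-\beta)$, and to analyze this expression through the parity of $\alpha$ and $\beta$ in the residue field $k = R/2R$ (writing $q = \abs{k}$, a power of $2$, with $R \ne \z_2$ exactly when $q \ge 4$). For the containment $Q(L) \subseteq R^\times \cup 4R$ I would split into cases: if one of $\alpha,\beta$ is a unit and the other lies in $2R$ then $\alpha^2 - \beta^2$ is a unit; if both lie in $2R$, or if both are units (so that $\alpha \pm \beta$ are both even), then $\alpha^2 - \beta^2 \in 4R$. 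In no case does a value of exact valuation one occur. For the reverse containment I would exhibit explicit vectors: $\alpha = (c+1)/2,\ \beta = (c-1)/2$ represents any unit $c$ (both lie in $R$ since $c$ is odd), and $\alpha = r+1,\ \beta = r-1$ represents any $4r \in 4R$. This settles $Q(L) = R^\times \cup 4R$, and notably uses nothing about the size of $q$.

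For $Q^\ast(L)$ I would treat units and $4R$ separately. First, any representation of a unit is automatically primitive, since both coordinates lying in $2R$ forces $Q(v) \in 4R$; hence $R^\times \subseteq Q^\ast(L)$ via the witnesses above. Next, for a value in $4R$ the primitivity constraint is genuinely restrictive: a vector with exactly one unit coordinate produces a unit value, so a primitive vector with value in $4R$ must have \emph{both} coordinates units, forcing $\alpha \equiv \beta \pmod{2}$. Writing $\alpha = \beta + 2\gamma$ then gives $\alpha^2 - \beta^2 = 4\gamma(\beta+\gamma)$, and since $\beta + 2\gamma$ is automatically a unit whenever $\beta$ is, this reduces the problem to identifying the set
\[
 S := \{\gamma(\beta+\gamma) : \beta \in R^\times,\ \gamma \in R\}, \qquad Q^\ast(L) \cap 4R = 4S.
\]

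The crux is computing $S$, and this is exactly where the residue field size enters. Taking $\gamma$ to be a unit, one must solve $\beta = r\gamma^{-1} - \gamma$ with $\beta$ a unit, i.e. $\bar r \ne \bar\gamma^{\,2}$ in $k$; as $\gamma$ ranges over units, $\bar\gamma^{\,2}$ ranges over all of $k^\times$ because the Frobenius is bijective, so when $q \ge 4$ (hence $\abs{k^\times} \ge 3$) one can always dodge the at most one forbidden square class and conclude $S = R$, giving $Q^\ast(L) = R^\times \cup 4R = Q(L)$. When $R = \z_2$ the residue field $\f_2$ leaves no room to dodge: a parity check shows $\gamma(\beta+\gamma)$ is always even, so $S \subseteq 2\z_2$, while $\gamma = 1,\ \beta = r-1$ realizes every even $r$; thus $S = 2\z_2$ and $Q^\ast(L) \cap 4R = 8\z_2$, yielding $Q^\ast(L) = \z_2^\times \cup 8\z_2$. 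I expect the main obstacle to be precisely this last dichotomy: recognizing that primitive representations of $4R$-values reduce to $S$, and that having more than one nonzero square class in the residue field (equivalently $R \ne \z_2$) is exactly what allows all of $4R$—rather than only $8R$—to be hit primitively.
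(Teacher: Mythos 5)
Your treatment of the crux is correct and is, after a change of variables, exactly the paper's own argument: the paper rewrites $\ang{1,-1}\cong\left(\begin{smallmatrix}1&1\\1&0\end{smallmatrix}\right)$ and studies $g(x,y)=x(x+2y)$; a primitive vector with value in $4R$ must there have $x=2\gamma$ and $y=\beta$ a unit, giving $g=4\gamma(\gamma+\beta)$ --- precisely your set $4S$ --- and the paper's two endgame moves (over $\z_2$, $y\equiv1\spmod2$ forces $8\mid g$; for $R\ne\z_2$, choosing a unit $\eta$ with $\eta^2\not\equiv\epsilon\spmod2$ yields $g(2\eta,\eta^{-1}\epsilon-\eta)=4\epsilon$) are your parity check and your ``dodge the forbidden square class'' via bijectivity of Frobenius. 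Your uniform witness $4r=(r+1)^2-(r-1)^2$ for $4R\subseteq Q(L)$ is fine as well.

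There is, however, a genuine gap in your unit case when $R\ne\z_2$. The witness $\alpha=(c+1)/2$, $\beta=(c-1)/2$ lies in $R$ only when $c\equiv1\spmod2$, i.e.\ when $c$ reduces to $1$ in the residue field $k$: since the two conditions $2\mid c+1$ and $2\mid c-1$ coincide in characteristic $2$, a unit $c$ with $\bar c\ne1$ (which exists exactly when $q\ge4$) makes $c\pm1$ \emph{units}, so $(c\pm1)/2\notin R$ and your vector does not exist --- take $R=\z_2[\omega]$ with $\omega^2+\omega+1=0$ and $c=\omega$. In short, ``odd'' in $R$ means unit, not congruent to $1$ modulo $2$. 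The repair is the same Frobenius trick you deploy for $S$, and it is what the paper does here: pick a unit $\eta$ with $\eta^2\equiv c\spmod2$, factor $c=uv$ with $u=\eta$, $v=c\eta^{-1}$, so that $\bar u=\bar v$ and $\alpha=(u+v)/2$, $\beta=(u-v)/2$ lie in $R$ and satisfy $\alpha^2-\beta^2=c$ (such a representation of a unit is automatically primitive, as you note). The same $\z_2$-blindness infects your containment step: the parenthetical ``both units, so that $\alpha\pm\beta$ are both even'' is false for $R\ne\z_2$, since $\bar\alpha\ne\bar\beta$ makes both $\alpha+\beta$ and $\alpha-\beta$ units (characteristic $2$), and then the value is a unit, not an element of $4R$. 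The conclusion you actually need survives because $\alpha+\beta\equiv\alpha-\beta\spmod2$, so the two factors always share parity and the product has valuation $0$ or at least $2$, never $1$; but as written your both-units case asserts the wrong thing. With these two points corrected, your proof is complete and essentially coincides with the paper's.
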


\begin{proof}
Define $g(x, y) = x(x+2y)$. Since $\ang{1, -1} \cong \left(\begin{smallmatrix}1&1\\1&0\end{smallmatrix}\right)$, we have
\begin{align*}
 Q(L) & = \left\{g(x, y) \mathrel{}\middle|\mathrel{} (x, y)\in R^2\right\}\text,\\
 Q^\ast (L) & = \left\{g(x, y) \mathrel{}\middle|\mathrel{} (x, y)\in R^2\text{ is primitive}\right\}\text.
\end{align*}
If $x$ is a unit in $R$, then so is $g(x, y)$. If $x$ is even, then $g(x, y)$ is divisible by $4$. Hence, $Q(L)\subseteq R^\times\cup 4R$. Let $\epsilon$ be any unit. Then, there exists a unit $\eta$ such that $\eta^2 = \epsilon - 2\alpha$ for some $\alpha\in R$, which implies that $g(\eta, \eta^{-1}\alpha) = \epsilon$. For any nonnegative integer $a$, note that $g(2^{a+2}, \epsilon - 2^{a+1}) = 2^{a+3}\epsilon$. Suppose that $R = \z_2$. In this case, it remains to show that $Q^\ast(L)\cap 4 R^\times = \varnothing$. Assume that $g(x, y)\in 4R^\times$. Since $x$ must be even, we have $y\in R^\times$. Furthermore, since $y\equiv 1\spmod2$, this implies that $g(x, y)$ is divisible by $8$, which is absurd. Now, suppose that $R \ne \z_2$. In this case, it remains to show that $4R^\times \subseteq Q^\ast(L)$. There exists a unit $\eta$ such that $\eta^2 \not\equiv \epsilon\spmod2$. Then, $g(2\eta, \eta^{-1}\epsilon - \eta) = 4\epsilon$. This completes the proof.
\end{proof}

\begin{lem}\label{lem:easy2pnu}
\textup{(a)} For any unit $\epsilon\in R$, $\bbH\mathbin{\perp}\ang{\epsilon}$ is isometric to $\ang{1, -1, \epsilon}$. Hence, $\bbH\mathbin{\perp}\ang{\epsilon}$ primitively represents all binary $R$-lattices of the form $\ang{\alpha, \epsilon}$ for any $\alpha\in R$. In particular, $\bbH\mathbin{\perp}\ang{\epsilon}$ is primitively $1$-universal.

\noindent \textup{(b)} If an $R$-lattice $J$ primitively represents an $R$-lattice $K$ of rank $k$ as well as a unit in $R$, then $\bbH^n \mathbin{\perp} J$ primitively represents all $R$-lattices of rank $n+k$ of the form $\ell\mathbin{\perp} K$ for any $R$-lattice $\ell$ of rank $n$. In particular, $\bbH^n \mathbin{\perp} J$ is primitively $n$-universal.
\end{lem}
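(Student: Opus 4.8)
The plan is to treat the two parts in turn, with part (a) providing the engine for part (b).

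\emph{Part (a).} First I would pin down the isometry $\bbH \perp \ang\epsilon \cong \ang{1,-1,\epsilon}$. By Lemma~\ref{lem:1-1} we have $\ang{1,-1}\cong\left(\begin{smallmatrix}1&1\\1&0\end{smallmatrix}\right)$, so in a suitable basis $f_1,f_2,f_3$ the lattice $\ang{1,-1,\epsilon}$ has Gram matrix $\left(\begin{smallmatrix}1&1&0\\1&0&0\\0&0&\epsilon\end{smallmatrix}\right)$. Since the residue field of $R$ has characteristic $2$, every unit is a square modulo $2$, so I can choose $d\in R^\times$ with $\epsilon d^2\equiv 1\spmod{2}$ and then $c\in R$ making $g_1=f_1+cf_2+df_3$ isotropic; its $f_1$-coefficient being a unit, $g_1$ is primitive. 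A short computation produces $g_2=f_2$ (isotropic, with $B(g_1,g_2)=1$) and a norm-$\epsilon$ vector $g_3$ orthogonal to both, exhibiting $\ang{1,-1,\epsilon}\cong\bbH\perp\ang\epsilon$. Granting this, I would show $\bbH\perp\ang\epsilon$ primitively represents every $\ang{\alpha,\epsilon}$ by two cases: for $\alpha\in 2R$, pair the primitive vector $(\alpha/2)x+y$ of $\bbH$ with the generator of $\ang\epsilon$; for $\alpha\in R^\times$, use that $\ang{1,-1}$ represents every unit (Lemma~\ref{lem:1-1}) to place $\ang\alpha$ in the $\ang{1,-1}$ block and $\ang\epsilon$ in its own block. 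In both cases the two vectors lie in orthogonal summands, so the representation is primitive. Primitive $1$-universality then follows by restricting a primitive representation of $\ang{\alpha,\epsilon}$ to the orthogonal summand $\ang\alpha$ and letting $\alpha$ range over $R$.

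\emph{Part (b).} Here I would first reduce $\ell$ to its odd and even parts: peeling off unit-norm primitive vectors one at a time (each spans a unimodular, hence orthogonal, summand) yields $\ell\cong\ang{\delta_1,\dots,\delta_s}\perp\ell_0$ with $\delta_i\in R^\times$ and $\ell_0$ even of rank $n-s$. By Lemma~\ref{lem:easypnueven} the even part $\ell_0$ embeds primitively into $\bbH^{n-s}$, and $K$ embeds primitively into $J$ by hypothesis. The remaining problem is to absorb the $s$ diagonal units using $s$ copies of $\bbH$ together with the unit carried by $J$.

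The step I expect to be the main obstacle is that the unit of $J$ and the copy of $K$ inside $J$ may be forced to overlap (already when $J\cong\ang u\cong K$), so one cannot simply reserve the unit to pair with an $\bbH$. I would resolve this with a peeling identity read off from part (a): if $P$ primitively represents a unit $u$, then $P\cong\ang u\perp P_0$, and since part (a) furnishes a primitive (hence unimodular, hence orthogonally split) copy of $\ang{\delta,u}$ inside $\bbH\perp\ang u$, a determinant count shows its complement is $\ang w$ for some $w\in R^\times$, i.e.\ $\bbH\perp\ang u\cong\ang\delta\perp\ang u\perp\ang w$. Consequently
\[
 \bbH\perp P\;\cong\;\ang\delta\perp(P\perp\ang w).
\]
Thus peeling off an odd unit $\ang\delta$ at the cost of one $\bbH$ does not consume $P$; it enlarges the seed to $P\perp\ang w\supseteq P$, which still primitively represents both $K$ and a unit. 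Iterating $s$ times gives $\bbH^s\perp J\cong\ang{\delta_1,\dots,\delta_s}\perp J^+$ with $J^+\supseteq J$.

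Finally I would assemble the pieces. Combining the above with the spare hyperbolic planes yields $\bbH^n\perp J\cong\ang{\delta_1,\dots,\delta_s}\perp\bbH^{n-s}\perp J^+$, into whose three orthogonal summands I embed $\ang{\delta_1,\dots,\delta_s}$, $\ell_0$, and $K$ primitively. Since primitive representations into orthogonal summands combine to a primitive representation, $\ell\perp K$ is primitively represented by $\bbH^n\perp J$. Taking $K=0$ (so $k=0$) gives the asserted primitive $n$-universality of $\bbH^n\perp J$.
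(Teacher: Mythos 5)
You prove exactly what the paper proves and by essentially the same route: your hand construction of $\bbH\mathbin{\perp}\ang{\epsilon}\cong\ang{1,-1,\epsilon}$ replaces the paper's citation of O'Meara 93:16, your two-case placement of $\ang{\alpha}$ (even $\alpha$ into $\bbH$, unit $\alpha$ into $\ang{1,-1}$ via Lemma~\ref{lem:1-1}) matches the paper's use of that lemma, and your peeling identity $\bbH\mathbin{\perp}P\cong\ang{\delta}\mathbin{\perp}(P\mathbin{\perp}\ang{w})$ is an explicit rendering of the paper's ``split $J\cong\ang{\epsilon}\mathbin{\perp}J'$ and apply (a) inductively.'' One point in your favor: by writing $\ell\cong\ang{\delta_1,\dots,\delta_s}\mathbin{\perp}\ell_0$ and routing the even part $\ell_0$ into $\bbH^{n-s}$ via Lemma~\ref{lem:easypnueven}, you make explicit a step the paper's one-line proof of (b) leaves implicit, since even components such as $2^a\bbA$ admit no orthogonal rank-one summand and so cannot be handled by iterating (a) alone.
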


\begin{proof}
(a) The isometry $\bbH\mathbin{\perp}\ang{\epsilon} \cong \ang{1, -1, \epsilon}$ follows from \cite[93:16]{OM}. Now, apply Lemma~\ref{lem:1-1}. (b) If $J$ primitively represents $\epsilon\in R^\times$, then $J \cong \ang{\epsilon}\mathbin{\perp} J'$ for some $R$-lattice $J'$. Now, apply (a) inductively.
\end{proof}

By Lemma~\ref{lem:easy2pnu}, $\bbH^n\mathbin{\perp}\ang{\epsilon}$ is prmitively $n$-universal for any unit $\epsilon$ in $R$. By combining it with Corollary~\ref{cor:corindgen}, we conclude that $2n\le u^\ast_2(n)\le 2n+1$. We first settle down the case when $R\ne\z_2$. In the remaining of this section, we assume that the local ring $R$ is unramified dyadic such that $R\ne\z_2$. Note that \cite[Theorem~1.3]{HH23} implies that $u_R(2) = 5$. Thus, $u^\ast_R(2) \ge u_R(2) = 5$. Therefore, the minimal rank of primitively $2$-universal lattices is five. Next, we prove that $u^\ast_R(n) = 2n$ for any $n\ge 3$.

\begin{thm}
For any $n\ge 3$, the $R$-lattice $\bbH^{n-1} \mathbin{\perp} \ang{1, -1}$ is primitively $n$-universal. In particular, $u^\ast_R(n) = 2n$ for any $n\ge 3$.
\end{thm}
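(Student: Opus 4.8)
Throughout write $M:=\bbH^{n-1}\perp\ang{1,-1}$. The plan is to represent an arbitrary rank-$n$ lattice $N$ by $M$ through a single application of Lemma~\ref{lem:easy2pnu}(b), which says that if $J$ primitively represents a lattice $K$ of rank $k$ together with a unit, then $\bbH^m\perp J$ primitively represents $\ell\perp K$ for \emph{every} rank-$m$ lattice $\ell$. Writing $M=\bbH^m\perp J$ in the three ways $J=\ang{1,-1}$ ($m=n-1$), $J=\bbH\perp\ang{1,-1}$ ($m=n-2$), and $J=\bbH^2\perp\ang{1,-1}$ ($m=n-3$), and noting each such $J$ contains $\ang{1,-1}$ and hence represents a unit, it suffices, for each $N$, to find an orthogonal splitting $N\cong\ell\perp K$ in which $K$ has rank $1$, $2$, or $3$ and is primitively represented by the corresponding $J$; the complement $\ell$ is arbitrary and is absorbed by $\bbH^m$. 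The hypothesis $n\ge 3$ is exactly what legitimizes the rank-$3$ option, i.e.\ $m=n-3\ge 0$.

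First I would invoke the dyadic structure theory \cite{OM} to decompose $N$ into an orthogonal sum of rank-$1$ pieces $\ang{2^r\epsilon}$ and rank-$2$ pieces $2^a\bbH$, $2^a\bbA$. If some piece is $\ang{\alpha}$ with $\alpha\in R^\times\cup 4R$, take $K=\ang{\alpha}$: by Lemma~\ref{lem:1-1}, $\ang{1,-1}$ primitively represents $\alpha$, so the option $J=\ang{1,-1}$ applies. If some piece is $2^a\bbH$ ($a\ge 0$) or $2^a\bbA$ ($a\ge 1$), take $K$ to be that piece: by Lemma~\ref{lem:easypnueven}, $\bbH\perp\ang{1,-1}$ primitively represents it, using the isotropy of $\ang{1,-1}$ for $2^a\bbH$ and the fact that $\ang{1,-1}$ represents $2^{a+1}\epsilon\in 4R$ for $2^a\bbA$. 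If neither applies, then every piece is $\ang{2\epsilon}$ or $\bbA=2^0\bbA$; using $\bbA\perp\bbA\cong\bbH\perp\bbH$, two copies of $\bbA$ can be traded for a piece $\bbH$ handled above, so I may assume at most one $\bbA$ remains. Thus $N$ has the form $\ang{2\epsilon_1,\dots,2\epsilon_s}$ or $\bbA\perp\ang{2\epsilon_1,\dots,2\epsilon_s}$, and since $n\ge 3$ one can split off a rank-$3$ summand $K$ equal to $\ang{2\epsilon_1,2\epsilon_2,2\epsilon_3}$ or $\bbA\perp\ang{2\epsilon_1}$ and invoke the option $J=\bbH^2\perp\ang{1,-1}$.

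The crux is therefore to prove that $M_3:=\bbH^2\perp\ang{1,-1}$ primitively represents $\ang{2\epsilon_1,2\epsilon_2,2\epsilon_3}$ and $\bbA\perp\ang{2\delta}$ for arbitrary units. I would do this by explicit construction, writing $\bbH^2=(Rx_1\oplus Ry_1)\perp(Rx_2\oplus Ry_2)$ with $Q(\alpha x_i+\beta y_i)=2\alpha\beta$. For the first lattice, set $v_i=\epsilon_ix_i+y_i$ for $i=1,2$ (pairwise orthogonal, of norms $2\epsilon_i$) and $v_3=(-\epsilon_1x_1+y_1)+w$ with $w\in\ang{1,-1}$; since $\epsilon_1+\epsilon_3$ is even one has $2(\epsilon_1+\epsilon_3)\in 4R$, so by Lemma~\ref{lem:1-1} there is a \emph{primitive} $w$ with $Q(w)=2(\epsilon_1+\epsilon_3)$, whence $Q(v_3)=2\epsilon_3$ and $v_3\perp v_1,v_2$. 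Primitivity of $\{v_1,v_2,v_3\}$ follows because $v_1,v_2$ are primitive in distinct hyperbolic planes while $v_3$ has primitive $\ang{1,-1}$-component $w$, which yields a dual system of coordinate functionals. For $\bbA\perp\ang{2\delta}$ I would realize $\bbA=R[x_1+y_1,\,x_1+\rho x_2+y_2]$ inside $\bbH^2$ and then seek an orthogonal vector of norm $2\delta$ whose $\ang{1,-1}$-component is primitive; the orthogonality and norm conditions reduce to solving $a^2-ad+\rho d^2\equiv\delta\smod2$ and then primitively representing a resulting element of $4R$ by $\ang{1,-1}$.

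The main obstacle is precisely this final construction: for the even lattices admitting no rank-$1$ orthogonal summand of norm in $R^\times\cup 4R$ — the odd $2$-modular lattices $\ang{2\epsilon_1,2\epsilon_2,2\epsilon_3}$ and the anisotropic unimodular plane $\bbA$ — the bound $2n$ is tight and the representing vectors must be produced by hand, the delicate point being to keep the $\ang{1,-1}$-component primitive. This is exactly where $R\ne\z_2$ is indispensable: the argument repeatedly represents elements of $4R^\times$ primitively by $\ang{1,-1}$, which by Lemma~\ref{lem:1-1} is possible precisely because $Q^\ast(\ang{1,-1})=R^\times\cup 4R$ when $R\ne\z_2$ (for $R=\z_2$ one only gets $R^\times\cup 8R$, and these constructions fail, consistent with the larger value of $u^\ast_{\z_2}$ obtained separately). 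Granting the two constructions, Lemma~\ref{lem:easy2pnu}(b) shows $\bbH^{n-1}\perp\ang{1,-1}$ is primitively $n$-universal, so $u^\ast_R(n)\le 2n$; together with $u^\ast_R(n)\ge 2n$ from Theorem~\ref{thm:indgen}, this gives $u^\ast_R(n)=2n$ for all $n\ge 3$.
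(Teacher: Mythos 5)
Your overall reduction is sound and follows the same skeleton as the paper's proof: both use Lemma~\ref{lem:easy2pnu}(b) together with $Q^\ast(\ang{1,-1}) = R^\times\cup 4R$ (Lemma~\ref{lem:1-1}) and Lemma~\ref{lem:easypnueven} to dispose of every Jordan piece except the even unimodular $\bbA$'s and the proper $2$-modular pieces $\ang{2\epsilon}$, and both get the lower bound from Theorem~\ref{thm:indgen} (equivalently Corollary~\ref{cor:corindgen}). But your endgame has a genuine error. In the construction of $v_3$ for $K=\ang{2\epsilon_1,2\epsilon_2,2\epsilon_3}$ you assert that ``$\epsilon_1+\epsilon_3$ is even,'' hence $2(\epsilon_1+\epsilon_3)\in 4R$. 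A sum of two units is even if and only if they have the same residue, and the residue field of $R$ is $\f_{2^f}$ with $f\ge 2$ precisely in the regime $R\ne\z_2$ covered by this theorem. If $\epsilon_1\not\equiv\epsilon_3\spmod 2$, then $2(\epsilon_1+\epsilon_3)\in 2R^\times$, which by Lemma~\ref{lem:1-1} is not represented by $\ang{1,-1}$ at all, primitively or otherwise, so no admissible $w$ exists. Concretely, for residue field $\f_4$ and $K=\ang{2,2\omega,2\omega^2}$ with $1,\omega,\omega^2$ in distinct residue classes, no reindexing of the $\epsilon_i$ rescues the construction. (Ironically, the parity claim you rely on is valid only over $\z_2$, the one ring excluded here.)

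The error is local and fixable: replace $v_3=(-\epsilon_1x_1+y_1)+w$ by $v_3=\alpha(-\epsilon_1x_1+y_1)+w$ with $\alpha\in R^\times$ chosen so that $\alpha^2\equiv\epsilon_3\epsilon_1^{-1}\spmod 2$ (possible since the Frobenius is bijective on the residue field); then $Q(w)=2(\epsilon_3+\epsilon_1\alpha^2)\in 4R\subseteq Q^\ast(\ang{1,-1})$, orthogonality to $v_1$ is unaffected, and your primitivity argument via the primitive $\ang{1,-1}$-component of $w$ goes through. Your second construction, for $\bbA\perp\ang{2\delta}$, is only a sketch: the congruence $a^2-ad+\rho d^2\equiv\delta\spmod 2$ is indeed solvable (mod $2$ it is the anisotropic norm form of the quadratic residue extension, which is surjective onto the residue field), but you should say so, and then the leftover element of $4R$ is primitively represented by $\ang{1,-1}$ exactly as above. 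Note that the paper avoids both hand constructions entirely: it observes that an even exceptional $\ell$ with a unimodular component satisfies $\ell\cong\bbH\mathbin{\perp}\ell'$ (using isometries such as $\bbA\mathbin{\perp}\bbA\cong\bbH\mathbin{\perp}\bbH$ and $\ang{\alpha}\mathbin{\perp}\bbA\cong\ang{\Delta\alpha}\mathbin{\perp}\bbH$), with $\ell'$ even and hence primitively represented by $\bbH^{n-2}$, while a proper $2$-modular $\ell$ of rank $\ge 3$ is split by $2\bbH$ or $2\bbA$, each primitively represented by $\bbH\mathbin{\perp}\ang{1,-1}$ by Lemma~\ref{lem:easypnueven}. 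That structural route is what your explicit computations are re-deriving, and it sidesteps the parity pitfall you fell into.
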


\begin{proof}
Let $L\cong \bbH^{n-1} \mathbin{\perp} \ang{1,-1}$ and let $\ell$ be any $R$-lattice of rank $n$. Since $Q^\ast(\ang{1,-1}) = R^\times \cup 4R$, Lemmas~\ref{lem:easypnueven} and \ref{lem:easy2pnu} implies that $\ell$ is primitively represented by $L$ unless $\ell$ satisfies the following condition (*):
\begin{itemize}
\item[(*)] $\ell$ is even unimodular or proper $2$-modular, or $\ell$ has a Jordan splitting $\ell\cong \ell_1 \mathbin{\perp} \ell_2$, where $\ell_1$ is even unimodular and $\ell_2$ is proper $2$-modular.
\end{itemize}
Suppose that $\ell$ satisfies the condition (*). If $\ell$ has a unimodular component, then actually $\ell \cong \bbH \mathbin{\perp} \ell'$ for some even lattice $\ell'$ of rank $n-2$. Since $\ell'$ is primitively represented by $\bbH^{n-2}$, $\ell$ is primitively represented by $L$. Otherwise, $\ell$ is proper $2$-modular. Then, $\ell$ is split by $2\bbH$ or $2\bbA$, either of which is primitively represented by $\bbH \mathbin{\perp} \ang{1,-1}$. Hence, $\ell$ is primitively represented by $L$.
\end{proof}

\section{Minimal rank of P$n$U classically integral lattices over $\z_2$}
\label{sec:pz2}

In this section, we prove that
\[
 u^\ast_{\z_2}(n) = \begin{cases}
  2n+1 & \text{if $n\le 4$,}\\
  2n & \text{if $n\ge 5$.}
 \end{cases}
\]
Recall that over $\z_2$, $Q(\ang{1,-1}) = \z_2^\times \cup 4\z_2$ and $Q^\ast(\ang{1,-1}) = \z_2^\times \cup 8\z_2$ by Lemma~\ref{lem:1-1}. We make a particular choice $\rho = 1$ for $R = \z_2$ so that $\mathbb{A}\cong \left(\begin{smallmatrix}2&1\\1&2\end{smallmatrix}\right)$. We know that $u_{\z_2}(2) = 5$ according to \cite[Theorem~1.3]{HH23} or \citep[Lemma~2.3]{Oh03}\footnote{It has been brought to our attention that \cite{Oh03} contains some errors. Specifically: (1) While \cite[Theorem~2.8]{Oh03} remains valid, its proof requires partial correction, and the corrected version is available. (2) Contrary to the original statement, $\langle 1, 1, 2, 4, 7 \rangle$ does not have an exceptional core $\z$-lattice. As a result, the findings in Section 3 need to be revised, and $\langle 1, 1, 2, 4, 7 \rangle$ should be recategorized as almost $2$-universal, correcting its misclassification. The full corrigendum can be provided upon request from the authors.}. Thus, we have $u^\ast_{\z_2}(2) \ge u_{\z_2}(2) = 5$. Therefore, the minimal rank of primitively $2$-universal $\z_2$-lattices is five.

Next, we prove that $u^\ast_{\z_2}(3) = 7$. Note that $6 \le u^\ast_{\z_2}(3) \le 7$. According to \cite[Theorem~6.16]{HH23} and Corollary~\ref{cor:corindgen}, any $3$-universal senary $\z_2$-lattice on the hyperbolic space $\bbH^3$ is isometric to a $\z_2$-lattice obtained from one of the following six $\z_2$-lattices by a unit scaling:

\vspace{1ex}

\begin{tasks}[label=(\Alph*),label-width=1.5em](3)
\task $\bbH^2\mathbin{\perp}\ang{1, -1}$,
\task $\bbH^2\mathbin{\perp}\ang{-1, 4}$,
\task $\bbH\mathbin{\perp}\ang{1, -1, 2, -2}$,
\task $\bbH\mathbin{\perp}\ang{1, -1, -2, 8}$,
\task $\bbH\mathbin{\perp}\ang{-1, 2, -2, 4}$,
\task $\bbH\mathbin{\perp}\ang{-1, -2, 4, 8}$.
\end{tasks}

\begin{thm}\label{thm:36}
No $\z_2$-lattice of rank $6$ is primitively $3$-universal. Therefore, we have $u^\ast_{\z_2}(3) = 7$.
\end{thm}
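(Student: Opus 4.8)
The plan is to combine the classification already invoked with a finer, primitivity-sensitive analysis modulo powers of $2$. By Corollary~\ref{cor:corindgen}(a), a primitively $3$-universal $\z_2$-lattice of rank $6$ must lie on the hyperbolic space $\bbH^3$ and in particular be $3$-universal; hence, by \cite[Theorem~6.16]{HH23}, it is, up to a unit scaling, one of the six lattices (A)--(F). Since scaling by a unit is a bijection on rank-$3$ lattices that preserves primitive representation, it suffices to show that none of (A)--(F) is primitively $3$-universal. For each of the six I would exhibit an explicit \emph{even} rank-$3$ lattice $N$ that it fails to represent primitively; the natural candidates are the proper $2^k$-modular lattices, e.g. $\ang{2^k\epsilon_1, 2^k\epsilon_2, 2^k\epsilon_3}$, with the exponent $k$ matched to the deepest Jordan component of the candidate and with the units $\epsilon_i$ chosen adversarially. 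Because the candidates are $3$-universal they certainly \emph{represent} every such $N$, so the whole content is to show the representation can never be made primitive.

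The main mechanism is reduction modulo $2$. A primitive representation $\sigma\colon N\to M$ realizes $N$ as a direct summand, so $\overline{\sigma(N)}$ is a $3$-dimensional subspace of $M/2M$ on which the reduced symmetric bilinear form vanishes identically (since $\ks N\subseteq 2\z_2$); that is, $\overline{\sigma(N)}$ is totally isotropic for the nondegenerate $\f_2$-form on $M/2M$. As $M$ has rank $6$ with $FM$ hyperbolic, the Witt index of this $\f_2$-space is exactly $3$, so $\overline{\sigma(N)}$ is forced to be a maximal totally isotropic subspace, which pins down its position relative to the even and the odd parts of $M$. Refining one step, the values $Q(\sigma(v))/2 \bmod 2$ define a quadratic form on this maximal isotropic subspace, and the requirement that each generator have norm exactly $2^k\epsilon_i$ with $\epsilon_i$ a unit — together with the gap $Q^\ast(\ang{1,-1})=\z_2^\times\cup 8\z_2$ of Lemma~\ref{lem:1-1}, which forbids primitively representing a $4\times\text{(unit)}$ — translates into a congruence relating the $\epsilon_i$ modulo $4$. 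Choosing the $\epsilon_i$ so that this congruence fails produces an $N$ that is represented but never primitively represented.

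Equivalently, and perhaps cleaner for the bookkeeping, I would phrase the final step through orthogonal complements: since the relevant Jordan block of $M$ is unimodular and $\sigma(N)$ is primitive, $\sigma(N)^\perp$ has the same discriminant group as $N$, so it is again proper $2^k$-modular of rank $3$, and $M$ is a glue of fixed index of $\sigma(N)\perp\sigma(N)^\perp$ along an isometry of discriminant forms. Comparing this glue with the actual isometry type of the candidate (read off from its invariants $dM$ and oddity) yields the same congruential obstruction on the units. To verify the failure is genuinely one of primitivity, I would note that the same $N$ is (imprimitively) represented because $\bbH^3$ primitively represents all even rank-$3$ lattices by Lemma~\ref{lem:easypnueven}, while the odd candidates (A)--(F) represent $N$ only by the $3$-universality already granted.

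In summary the steps are: (1) reduce to (A)--(F) via Corollary~\ref{cor:corindgen} and scaling; (2) for each candidate compute the Witt index of $M/2M$ and the shape of its maximal totally isotropic subspaces; (3) attach the refined quadratic form and extract the congruence on $(\epsilon_1,\epsilon_2,\epsilon_3)$ forced by primitivity; (4) select units violating it and conclude. The hard part will be Step~(3): separating representation from \emph{primitive} representation requires the delicate analysis modulo $8$ (and of discriminant forms), and it must be carried out separately for the unimodular candidate (A) and for each of the mixed-scale candidates (B)--(F), whose several Jordan components make both the totally isotropic analysis and the resulting congruence genuinely case-dependent.
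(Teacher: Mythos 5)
Your Step (1) --- the reduction to the six candidates (A)--(F) via Corollary~\ref{cor:corindgen} and \cite[Theorem~6.16]{HH23}, plus the observation that unit scaling preserves primitive universality --- is exactly how the paper begins. But everything after that is where the proof actually lives, and there your proposal has a genuine gap: Step (3), which you yourself defer as ``the hard part,'' is precisely the content of the theorem, and the mechanism you describe for it is partly wrong. The reduced bilinear form on $M/2M$ is \emph{not} nondegenerate for the candidates (B)--(F): every Jordan component of scale $4\z_2$ or smaller (e.g.\ the $\ang{4}$ in (B), the $\ang{8}$ in (D) and (F)) reduces into the radical, so the ``nondegenerate $\f_2$-form'' you work with does not exist there; and in any case the claim that its Witt index ``is exactly $3$'' does not follow from $FM$ being hyperbolic --- mod-$2$ isotropy is not controlled by the rational Witt index (even for (A), where $M/2M \cong \bbH^2 \perp \ang{1,1}$ over $\f_2$ is nondegenerate but non-alternating, one must separately locate the characteristic vector and check that every $3$-dimensional totally isotropic subspace contains it). Moreover, you never verify for a single candidate that your adversarial targets actually fail: such proper $2^k$-modular targets can work in some cases --- e.g.\ for (A) one can show $\ang{2,2,2}$ is not primitively represented, since any primitive even ternary sublattice must contain a primitive vector congruent mod $2L$ to the characteristic vector, whose norm is then forced into $8\z_2$, while $Q^\ast(\ang{2,2,2}) \cap 8\z_2 = \varnothing$ --- but this argument would have to be carried out six times with scale-dependent complications, and you have not completed it for any case. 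As it stands the proposal is a program whose decisive step is missing.

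For comparison, the paper avoids all of this with a shorter mechanism that your sketch does not contain: for (A), (B), (E) it represents $\bbA$ into $L$, and for (C), (D), (F) it primitively represents $\ang{1,3}$; since these binary lattices are unimodular, any such sublattice $M$ splits $L$, and by cancellation (\cite[Theorem~91:9]{OM}, \cite[Corollary~93:14a]{OM}) the complement $M^\perp$ is a \emph{fixed} quaternary lattice independent of the embedding. Primitive $3$-universality of $L$ would force $L$ to primitively represent $M \perp \ang{\alpha}$ for every $\alpha$, hence force $M^\perp$ to be primitively $1$-universal; each of the six resulting quaternaries fails this by the classification in \cite[Theorem~5.2]{EG21jnt}. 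Your plan uses neither the complement-plus-cancellation reduction nor the Earnest--Gunawardana classification at the failure step, so it would have to reprove by hand, case by case, the delicate mod-$8$ analysis that the cited classification already encapsulates.
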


\begin{proof}
It suffices to show that none of the above six lattices is primitively $3$-universal. First, Let $L$ be one of (A), (B), or (E). Since $L$ is $2$-universal, $\bbA$ is represented by $L$. For any sublattice $M\cong\bbA$ of $L$, $M$ splits $L$ and
\[
 M^\perp\cong \ang{1,1,1,5}\text{, }\ang{5,1,1,4}\text{, or }\ang{5,2,2,4}
\]
by \cite[Theorem~91:9]{OM} and \cite[Corollary~93:14a]{OM}. Next, let $L$ be one of (C), (D), or (F). Then $\ang{1,3}$ is primitively represented by $L$. For any sublattice $M\cong\ang{1, 3}$ of $L$, $M$ splits $L$ and
\[
 M^\perp\cong \ang{1,3,2,-2}\text{, }\ang{1,3,-2,8}\text{, or }\ang{3,-2,4,8}
\]
by \cite[Theorem~91:9]{OM} and \cite[Corollary~93:14a]{OM}. In any case, if $L$ were primitively $3$-universal, then $M^\perp$ must be primitively $1$-universal. However, according to \cite[Theorem~5.2]{EG21jnt}, $M^\perp$ is not primitively $1$-universal. Therefore, $L$ is not primitively $3$-universal.
\end{proof}

Now, we prove that $u^\ast_{\z_2}(4) = 9$. Note that $8 \le u^\ast_{\z_2}(4) \le 9$. According to \cite[Theorem~1.3]{HH23} and Corollary~\ref{cor:corindgen}, any $4$-universal octonary $\z_2$-lattice on the hyperbolic space $\bbH^4$ is isometric to a $\z_2$-lattice obtained from one of the following $\z_2$-lattices by a unit scaling for some nonnegative integer $t$:

\vspace{1ex}

\begin{tasks}[label=(\Alph*),label-width=1.5em](2)
\task $\bbH^3\mathbin{\perp}\ang{-1, 2^{2t}}$,
\task $\bbH^2\mathbin{\perp}\ang{1, -1, -2, 2^{2t+1}}$,
\task $\bbH^2\mathbin{\perp}\ang{-1, -1, 4, 2^{2t+2}}$,
\task $\bbH^2\mathbin{\perp}\ang{-1, 2, -2, 2^{2t+2}}$,
\task $\bbH^2\mathbin{\perp}\ang{-1, -2, 4, 2^{2t+3}}$,
\task $\bbH^2\mathbin{\perp}\ang{-1, -2, 8, 2^{2t+4}}$,
\task $\bbH^2\mathbin{\perp}\ang{-1, 2, -8, 2^{2t+4}}$.
\end{tasks}

\vspace{2ex}

\begin{lem}\label{lem:no4mod8} Let $N\cong \bbA\mathbin{\perp}\ang{1, -1}$ be the $\z_2$-lattice.

\noindent \textup{(a)} No integer congruent to $4$ modulo $8$ is primitively represented by $N$.

\noindent \textup{(b)} Let $M = M_1\mathbin{\perp} \cdots\mathbin{\perp} M_r$ be a Jordan splitting of a $\z_2$-lattice $M$ such that
\[
 \z_2 \supseteq \ks M_1 \supsetneq \cdots \supsetneq \ks M_r = \kn M_r = 2^{2s}\z_2\text{,}
\]
for some positive integer $s$. Suppose that $\q_2 M \cong \q_2 N$ and $M' := M_1\mathbin{\perp} \cdots\mathbin{\perp} M_{r-1}$ is anisotropic. Then no integer congruent to $2^{2s+2}$ modulo $2^{2s+3}$ is primitively represented by $M$.
\end{lem}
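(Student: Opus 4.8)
The plan is to settle part (a) by a direct computation modulo $8$, and then to obtain part (b) by decomposing a vector along the splitting $M=M'\mathbin{\perp}M_r$ and reducing the decisive case to exactly the computation of part (a). For part (a), write a vector of $N\cong\bbA\mathbin{\perp}\ang{1,-1}$ as $(a,b,c,d)$, so that $Q(a,b,c,d)=2(a^2+ab+b^2)+(c^2-d^2)$. The two elementary facts I would record are that $a^2+ab+b^2$ is odd unless both $a,b$ are even, so that $\ord_2\!\big(2(a^2+ab+b^2)\big)$ is always odd (and is $\ge 3$ when $a,b$ are both even), and that $c^2-d^2\not\equiv 2\pmod 4$ for all $c,d$, while $c^2-d^2\equiv 4\pmod 8$ forces $c,d$ to be both even. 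Now if $Q(a,b,c,d)\equiv 4\equiv 0\pmod 4$, then reducing modulo $4$ and using $c^2-d^2\not\equiv 2\pmod4$ forces $a,b$ to be both even; then the $\bbA$-part is $\equiv 0\pmod 8$, so $c^2-d^2\equiv 4\pmod 8$, which forces $c,d$ to be both even as well. Hence every coordinate is even, the vector is imprimitive, and (a) follows.

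For part (b), I would first note that, in $\z_2$, an integer lies in $2^{2s+2}+2^{2s+3}\z_2$ precisely when its $2$-adic valuation equals $2s+2$; thus the claim is equivalent to: no primitive $v\in M$ has $\ord_2(Q(v))=2s+2$. Write $v=v'+v_r$ with $v'\in M'$, $v_r\in M_r$, so $Q(v)=Q(v')+Q(v_r)$, and $v$ primitive means $v'\in M'\setminus 2M'$ or $v_r\in M_r\setminus 2M_r$. Since $M_r$ is $2^{2s}$-modular of odd type, $\ord_2(Q(v_r))\ge 2s$ for every $v_r$, and $\ord_2(Q(v_r))\ge 2s+2$ whenever $v_r\in 2M_r$. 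The key input on $M'$ is the following consequence of its anisotropy together with $\ks M_1\supsetneq\cdots\supsetneq\ks M_{r-1}\supsetneq 2^{2s}\z_2$: for every primitive $v'\in M'$ one has $\ord_2(Q(v'))\le 2s+1$. Anisotropy of $\q_2M'$ makes $\ord_2\circ Q$ bounded on primitive vectors by compactness, and the bound $\ks M_{r-1}\subseteq 2^{2s-1}\z_2$ on the scales forces this bound to be at most $2s+1$; making this sharp bound precise through a Jordan-component analysis is one of the two technical points.

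With these facts the case $v_r\in 2M_r$ is immediate: here $v'$ is primitive, so $\ord_2(Q(v'))\le 2s+1<2s+2\le\ord_2(Q(v_r))$, whence $\ord_2(Q(v))=\ord_2(Q(v'))\ne 2s+2$. In the remaining case $v_r$ is primitive in $M_r$ with $\ord_2(Q(v_r))\ge 2s$. Whenever $\ord_2(Q(v'))\ne\ord_2(Q(v_r))$ the valuation of the sum is the smaller of the two, which is never $2s+2$ because $\ord_2(Q(v'))\le 2s+1$ and $\ord_2(Q(v_r))\ge 2s$. The only surviving possibility is $\ord_2(Q(v'))=\ord_2(Q(v_r))\in\{2s,2s+1\}$, where cancellation can occur; factoring out the common power $2^{2s}$ (respectively $2^{2s+1}$) reduces it to showing that the sum of the two resulting unit-level contributions cannot be $\equiv 4\pmod 8$ (respectively $\equiv 2\pmod 4$) at the relevant primitive vectors.

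The main obstacle is precisely this boundary cancellation. Resolving it needs, first, the sharp valuation bound for primitive vectors of the anisotropic lattice $M'$ indicated above, and second, a residue computation modulo $8$ on the reduced forms. Here the full strength of the hypotheses is used: the anisotropy of $M'$ restricts which residues $Q(v')/2^{2s}$ can take, while the fixed rational type $\q_2M\cong\q_2N$, whose anisotropic kernel is $\q_2\bbA$, controls the odd-type contribution $Q(v_r)/2^{2s}$. Together these force the reduced sum to avoid $4\pmod 8$, exactly as in part (a), which is the $s=0$ instance of the statement. I expect the cleanest way to organize this final step is to split according to $\rank M_r\in\{1,2,3,4\}$ (equivalently $\dim_{\q_2}M'\in\{3,2,1,0\}$), since in each case the rational constraint $\q_2M\cong\q_2N$ pins down $M'$ and $M_r$ tightly enough to complete the modulo $8$ check.
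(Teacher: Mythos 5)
Part (a) of your proposal is correct and is in substance the paper's argument: the paper deduces (a) from Lemma~\ref{lem:1-1} ($Q^\ast(\ang{1,-1})=\z_2^\times\cup 8\z_2$ together with $Q(\bbA)\subseteq 2\z_2^\times\cup 8\z_2$), which is exactly your coordinate computation. Part (b), however, has a genuine gap, and one step is wrong as written. In the subcase ``$v_r$ primitive in $M_r$ and $\ord_2 Q(v')\ne\ord_2 Q(v_r)$'' you conclude the minimum is never $2s+2$ by invoking $\ord_2 Q(v')\le 2s+1$; but that bound was asserted only for \emph{primitive} $v'$, and in this subcase $v'$ may be imprimitive or zero, so $\ord_2 Q(v')$ can be arbitrarily large and then $\ord_2 Q(v)=\ord_2 Q(v_r)$. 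Nothing in your argument rules out a primitive $v_r\in M_r$ with $\ord_2 Q(v_r)=2s+2$: for a general proper $2^{2s}$-modular lattice this does happen (e.g.\ $4^s\ang{1,3}$ primitively represents $4\cdot 4^s$), and excluding it requires precisely the global hypotheses $\q_2 M\cong\q_2 N$ and anisotropy of $M'$ --- i.e.\ the structural analysis you defer. Moreover both pillars of your plan are only announced, not proved: ``compactness'' gives boundedness of $\ord_2\circ Q$ on primitive vectors of $M'$, but not the sharp value $2s+1$, which genuinely depends on the chain condition $\ks M_{r-1}\supsetneq 2^{2s}\z_2$ (the anisotropic lattice $\ang{1,3,8}$ has primitive vectors with $Q$-valuation $3$, so a bound in terms of $\kn M'$ alone fails); and the final residue check modulo $8$ over $\rank M_r\in\{1,2,3,4\}$ is exactly where the theorem lives. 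As it stands the proposal is an outline with the decisive cases unresolved.

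The paper's proof of (b) avoids all of this valuation--cancellation bookkeeping with one structural device you might adopt. Set $N'=\{y\in M'\mid \ord Q(y)\ge 2s\}$; this is a \emph{lattice} because $\q_2 M'$ is anisotropic (this is the only place anisotropy is used). Since $Q(x)=Q(x')+Q(x_r)$ and $\kn M_r=2^{2s}\z_2$, every $x\in M$ with $\ord Q(x)\ge 2s$ lies in $N'\mathbin{\perp}M_r$, and it remains primitive there since $2(N'\mathbin{\perp}M_r)\subseteq 2M$. So it suffices to treat $N'\mathbin{\perp}M_r$, and the hypotheses ($\q_2 M\cong\q_2 N$ together with the Jordan data) force $N'\mathbin{\perp}M_r\cong 4^s\bbA\mathbin{\perp}\ang{4^s,-4^s}$, i.e.\ the $4^s$-scaling of $N$ itself; then (b) is literally (a) after scaling. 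Note that identifying $N'\mathbin{\perp}M_r$ up to isometry is equivalent to the mod-$8$ case analysis you postponed, so if you repair your route you will in effect be proving the paper's isometry claim; reducing to the single sublattice $N'\mathbin{\perp}M_r$ first is both shorter and immune to the imprimitive-$v'$ trap above.
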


\begin{proof}
The assertion (a) follows directly from Lemma~\ref{lem:1-1}. Hence, we may assume that $r\ge 2$ in (b). Let $x = x_1 + \dotsb + x_r$ be a typical vector in $M$, where $x_i\in M_i$ for each $i$ with $1\le i\le r$. If $\ord Q(x)\ge 2s$, then
\[
 \ord Q(x_1 + \dotsb + x_{r-1})\ge 2s\text.
\]
Let $N' = \{y\in M' \mid \ord Q(y)\ge 2s\}$. It suffices to show that no integer congruent to $4\cdot 4^s$ modulo $8\cdot 4^s$ is primitively represented by $N'\mathbin{\perp} M_r$. Since $N'\mathbin{\perp} M_r\cong 4^s\bbA \mathbin{\perp} \ang{4^s, -4^s}$, the assertion (b) follows directly from (a).
\end{proof}

Let $N\cong \langle \epsilon_1, \dots, \epsilon_n \rangle$ be a unimodular $\z_2$-lattice, where $\epsilon_i$ is a unit in $\z_2$ for any $i$ with $1\le i\le n$. Note that, if $N\cong \langle \delta_1, \dots, \delta_n \rangle$ for some other units $\delta_i$ in $\z_2$ for $1\le i\le n$, then
\[
 \sum_{i=1}^n \delta_i \equiv \sum_{i=1}^n \epsilon_i \mod8\text.
\]
Hence, the residue class $(\sum_{i=1}^n \epsilon_i) + 8\z_2$ contained in $\z_2/8\z_2$ is an invariant of $N$, which is called the $2$-signature of $N$. (for this, see \cite[Chapter~15]{CS}).

\begin{lem}\label{lem:eus}
Let $L$ be a $\z_2$-lattice, and let $L = L_0\mathbin{\perp} L_1\mathbin{\perp} \dotsb \mathbin{\perp} L_t$ \textup($t\ge 0$\textup) be a fixed Jordan splitting of $L$, where $L_i = 0$ or $L_i$ is $2^i$-modular for each $i$ with $0\le i\le t$. For any integer $k$ with $0\le k\le t$, define $L_{\ge k} = L_k\mathbin{\perp} L_{k+1}\mathbin{\perp} \dotsb \mathbin{\perp} L_t$. Let $x = \sum_{i=0}^t x_i$ be a vector in $L$ such that $x_i\in L_i$. Assume that $\ks L = \z_2$, $x_0$ is primitive in $L_0$, and $Q(x)$ is even.

\noindent \textup{(a)} Suppose that $\kn L = \z_2$. Write $L_0 \cong \langle \epsilon_1, \dots, \epsilon_n \rangle$, where $\epsilon_i$ are units in $\z_2$ for $1\le i\le n$. Let $\gamma = \sum_{i=1}^n \epsilon_i$ be the $2$-signature of $L_0$. Assume that one of the following conditions holds:
\begin{enumerate}[\textup{(\roman{enumi})}]
\item $\kn (L_{\ge 1}) \subseteq 8\z_2$ and $Q(x) \not\equiv \gamma \spmod8$;
\item $\kn (L_{\ge 1}) \subseteq 4\z_2$ and $Q(x) \not\equiv \gamma \spmod4$;
\item $L_1 \cong \ang{2\epsilon}$, $\kn (L_{\ge 2}) \subseteq 8\z_2$, and $Q(x) - \gamma \not\equiv 0$, $2\epsilon \spmod8$;
\item $L_1 \cong \ang{2\epsilon, 2\epsilon'}$, $dL_1 \equiv 4\spmod{16}$, $\kn (L_{\ge 2}) \subseteq 8\z_2$, and
\[
 Q(x) - \gamma \not\equiv 0\text{, }2\epsilon\text{, }4 \pmod8\text;
\]
\item $L_1 \cong \ang{2\epsilon, 2\epsilon'}$, $dL_1 \equiv -4\spmod{16}$, $\kn (L_{\ge 2}) \subseteq 8\z_2$, and
\[
 Q(x) - \gamma \not\equiv 0\text{, }{\pm 2} \pmod8\text;
\]
\item $Q(x) \not\equiv \gamma \spmod2$.
\end{enumerate}
Then, there exists a binary even unimodular sublattice $M$ of $L$ containing $x$. In particular, $\rank L_0\ge 3$.

\noindent \textup{(b)} In addition to the assumptions in \textup{(a)}, suppose further that $Q(x)\equiv 2\spmod4$. Then, in addition to the conclusions in \textup{(a)}, there exists another binary even unimodular sublattice $M'$ of $L$ containing $x$ such that $M'\not\cong M$ if and only if $\rank L_0\ge 5$, $\kn (L_{\ge 1}) = 2\z_2$, or $\rank L_0 = 4$ and $dL_0 \equiv 3\spmod4$.

\noindent \textup{(c)} Suppose that $\kn L \subseteq 2\z_2$. Then, there exists a binary even unimodular sublattice $M$ of $L$ containing $x$. Suppose further that $Q(x)\equiv 2\spmod4$. Then, there exists another binary even unimodular sublattice $M'$ of $L$ containing $x$ such that $M'\not\cong M$ if and only if $\rank L_0\ge 4$ or $\kn (L_{\ge 1}) = 2\z_2$.
\end{lem}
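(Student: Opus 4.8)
The plan is to realize any binary even unimodular sublattice containing $x$ as $M = \z_2[x, w]$ for a suitable partner vector $w$, and to read off its isometry type from $Q(w)$ modulo $4$. Over $\z_2$ the only binary even unimodular lattices are $\bbH$ and $\bbA$, and since $Q(x)$ is even, any $w$ with $B(x, w) \in \z_2^\times$ and $Q(w)$ even spans with $x$ such a lattice $M$; being unimodular of scale $\ks L = \z_2$, this $M$ is automatically a direct summand of $L$. When in addition $Q(x) \equiv 2 \pmod 4$, computing $-\det M = B(x, w)^2 - Q(x) Q(w)$ modulo $8$ shows $M \cong \bbH$ if $Q(w) \equiv 0 \pmod 4$ and $M \cong \bbA$ if $Q(w) \equiv 2 \pmod 4$. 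First I would reduce all parity questions to $L_0$: writing $w = w_0 + v$ with $w_0 \in L_0$ and $v \in L_{\ge 1}$, orthogonality of the Jordan components together with $\ks L_{\ge 1}, \kn L_{\ge 1} \subseteq 2\z_2$ gives $B(x, w) \equiv B(x_0, w_0)$ and $Q(w) \equiv Q(w_0) \pmod 2$; in particular $Q(x_0)$ is even, and in (a)--(b) the hypothesis $\kn L = \z_2$ forces $L_0$ to be odd unimodular, hence diagonalizable as $\ang{\epsilon_1, \dots, \epsilon_n}$, whereas in (c) $L_0$ is even.

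For part (a) I would introduce the \emph{characteristic vector} $c \in L_0$, namely the unique class modulo $2L_0$ with $B(c, u) \equiv Q(u) \pmod 2$ for all $u$; it exists because $u \mapsto Q(u) \bmod 2$ is $\f_2$-linear on $L_0/2L_0$ (the cross term $2B$ vanishes) and $L_0$ is unimodular, and in the diagonalization it is the all-ones vector. The key dichotomy is that a valid $w_0$, one with $B(x_0, w_0) \in \z_2^\times$ and $Q(w_0)$ even, exists if and only if $x_0 \not\equiv c \pmod{2 L_0}$: the functionals $B(x_0, \cdot)$ and $B(c, \cdot) \equiv Q(\cdot) \pmod 2$ are nonzero, and the former attains the value $1$ on the kernel of the latter exactly when the two differ, i.e. when $x_0 \not\equiv c$. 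Moreover $x_0 \equiv c \pmod 2$ forces all diagonal coordinates of $x_0$ to be odd, whence $Q(x_0) \equiv \gamma \pmod 8$. Each hypothesis (i)--(vi) then rules this out: using $Q(x_0) = Q(x) - Q(x_{\ge 1})$ and enumerating $Q(x_{\ge 1}) \bmod 2^j$ from the stated control on $\kn L_{\ge 1}$, respectively from the shape of $L_1$ (for (iii)--(v) one lists $Q(x_1) \bmod 8 \in \{0, 2\epsilon\}$, resp. $\{0, 2\epsilon, 4\}$ or $\{0, \pm 2\}$ according as $dL_1 \equiv 4$ or $-4 \pmod{16}$), one deduces $Q(x_0) \not\equiv \gamma \pmod 8$, contradicting $x_0 \equiv c$. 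Finally $x_0 \not\equiv c$ with $Q(x_0)$ even forces at least two but not all diagonal coordinates to be odd, giving $\rank L_0 \ge 3$.

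For parts (b) and (c) the engine is a \emph{flip}: given $M = \z_2[x, w]$ from part (a) with $w = w_0 \in L_0$, any vector $z$ with $B(x, z), B(w, z) \in 2\z_2$ and $Q(z) \equiv 2 \pmod 4$ yields $w' = w + z$ with $B(x, w')$ still a unit and $Q(w') \equiv Q(w) + 2 \pmod 4$, so $M' = \z_2[x, w']$ is binary even unimodular of the opposite type. Such a $z$ is produced either from $L_{\ge 1}$ when $\kn L_{\ge 1} = 2\z_2$ (giving the second $M'$ unconditionally), or from the orthogonal complement $M_0^\perp$ in the splitting $L_0 = M_0 \perp M_0^\perp$, where $M_0 = \z_2[x_0, w_0]$ is itself binary even unimodular. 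I would then determine when the unimodular lattice $M_0^\perp$, of rank $\rank L_0 - 2$, represents a norm $\equiv 2 \pmod 4$: always when its rank is $\ge 3$, never when its rank is $\le 1$, and in rank $2$ precisely when $dM_0^\perp \equiv 1 \pmod 4$, which via $dL_0 = dM_0 \cdot dM_0^\perp$ and $dM_0 \equiv 3 \pmod 4$ is equivalent to $dL_0 \equiv 3 \pmod 4$. This gives the stated trichotomy in (b); for (c) the lattice $L_0$ is even, so every rank $\ge 2$ complement already has norm $2\z_2$ and the condition collapses to $\rank L_0 \ge 4$ or $\kn L_{\ge 1} = 2\z_2$. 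The converse directions follow by checking that, when all these fail, $Q(w') \bmod 4$ is constant over admissible $w'$, so only one type occurs.

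The main obstacle I anticipate is twofold. First, the case-by-case verification in (a) for the $2$-modular component $L_1$ in conditions (iii)--(v): one must correctly enumerate $Q(x_1) \bmod 8$, tracking the two square classes arising from $dL_1 \equiv 4$ versus $-4 \pmod{16}$, and confirm these are exactly the residues excluded by the hypotheses. Second, the necessity halves of (b) and (c): showing that when $\rank L_0$ is too small, $dL_0$ is of the wrong type, and $\kn L_{\ge 1} \subseteq 4\z_2$, there is genuinely no sublattice of the other type. This is delicate because it requires arguing that $Q(w') \bmod 4$ cannot be altered while $w'$ ranges over all of $L$, not merely over $L_0$.
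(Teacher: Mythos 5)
Your proposal is correct and reaches all of the stated conclusions, but by a genuinely different route in the parts that matter. For existence in (a), your characteristic-vector dichotomy (a valid partner $w_0$ exists iff $x_0 \not\equiv c \pmod{2L_0}$, and $x_0 \equiv c$ forces $Q(x_0) \equiv \gamma \pmod 8$) is a structural repackaging of the paper's coordinate argument, which shows directly that under (i)--(vi) not all diagonal coordinates of $x_0$ can be odd and then takes $M = \z_2[x, e_1 + e_n]$ with $\xi_1$ odd and $\xi_n$ even; the two are equivalent, and your enumeration of $Q(x_1) \bmod 8$ in (iii)--(v) agrees with what the paper uses implicitly. The real divergence is in (b) and (c): the paper produces the second lattice $M'$ by explicit case analysis on the pattern of the $\epsilon_i$ modulo $4$ (generators such as $e_i + e_n$ or $e_1 + e_2 + e_3 + e_n$, exploiting that the number $s$ of odd coordinates is even, which is also what makes the paper's claim ``$\rank L_0 = 4$ implies $dL_0 \equiv 1 \spmod4$'' work), whereas you split $L_0 = M_0 \mathbin{\perp} M_0^\perp$ with $M_0 = \z_2[x_0, w_0]$ and reduce everything to whether $M_0^\perp$ or $L_{\ge 1}$ represents an element $\equiv 2 \spmod4$; your rank/discriminant criterion for this is correct ($M_0^\perp$ is odd unimodular since $L_0$ is odd, and $dM_0 \equiv 3 \spmod4$ converts $dM_0^\perp \equiv 1 \spmod4$ into $dL_0 \equiv 3 \spmod4$), and it recovers the stated trichotomies uniformly. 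For the necessity halves the paper instead cites \cite[Proposition~20]{OM58} --- when $\kn (L_{\ge 1}) \subseteq 4\z_2$, any even unimodular binary sublattice of $L$ is represented by $L_0$ --- together with non-isometries such as $\bbH \mathbin{\perp} \ang{1,-1} \not\cong \bbA \mathbin{\perp} \ang{1,3}$ and $\bbH \mathbin{\perp} \ang{\epsilon} \not\cong \bbA \mathbin{\perp} \ang{5\epsilon}$. Your alternative, showing $Q(w') \bmod 4$ is constant over admissible $w'$, is the thinnest point of your write-up (you assert it rather than prove it), but it does go through and avoids the citation: in the remaining cases $\kn (L_{\ge 1}) \subseteq 4\z_2$ gives $Q(w') \equiv Q(w'_0) \spmod4$ at once, and expanding $w'_0 = \alpha x_0 + \beta w_0 + z'$ in $M_0 \mathbin{\perp} M_0^\perp$, the condition that $B(x_0, w'_0)$ be odd forces $\beta$ odd, while $Q(x_0) \equiv 2 \spmod4$ gives $\alpha^2 Q(x_0) + 2\alpha\beta B(x_0, w_0) \equiv 0 \spmod4$ and the failing cases force $Q(z') \equiv 0 \spmod4$, so $Q(w'_0) \equiv \beta^2 Q(w_0) \equiv Q(w_0) \spmod4$. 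In short, the paper buys brevity through explicit bases and O'Meara's representation theorem; your approach buys a uniform, self-contained argument at the cost of actually having to carry out that final constancy computation, which as written you left as a flagged obstacle rather than a proof.
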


\begin{proof}
(a) Suppose that $L_0 \cong \ang{\epsilon_1, \dotsc, \epsilon_n}$ in $e_1, \dotsc, e_n$. Write $x_0 = \xi_1 e_1 + \dotsb + \xi_n e_n$. Since $x_0$ is primitive, we may assume that $\xi_1$ is odd. We claim that at least one among $\xi_2$, \dots, $\xi_n$ is even. Suppose on the contrary that $\xi_2 \dotsm \xi_n$ is odd. Note that
\[
 Q(x_0) = \epsilon_1 \xi_1^2 + \epsilon_2 \xi_2^2 + \dotsb + \epsilon_n \xi_n^2 \equiv s(L_0) \pmod8\text,
\]
which contradicts the hypothesis (i) through (vi). Hence, we may assume that $\xi_n$ is even. Now, $M = \z_2[x, e_1 + e_n]$ is an even unimodular binary $\z_2$-lattice containing $x$.

(b) We continue to suppose that $L_0 \cong \ang{\epsilon_1, \dotsc, \epsilon_n}$ in $e_1, \dotsc, e_n$, and $x_0 = \xi_1 e_1 + \dotsb + \xi_n e_n$. By renumbering indices if necessary, we may assume that $\xi_1 \dotsm \xi_s$ is odd and $\xi_{s+1} \equiv \cdots \equiv \xi_n \equiv 0\spmod2$ for some $s$ with $1\le s < n$. As in (a), we let $M = \z_2[x, e_1 + e_n]$. First, we assume that $\kn (L_{\ge 1}) = 2\z_2$. Then, there exists $y\in L_{\ge 1}$ with $Q(y) \equiv 2\spmod4$. Note that $B(x, y) = B(x-x_0, y) \equiv 0\spmod2$. Hence, $M$ and $M' = \z_2[x, e_1 + e_n + y]$ satisfy all the required conditions. From now on, we assume that $\kn (L_{\ge 1}) \subseteq 4\z_2$.

Assume that $\rank L_0 \ge 5$ or $\rank L_0 = 4$ and $dL_0 \equiv 3\spmod4$. If there exists an index $i$ with $1 < i \le s$ such that $\epsilon_1 \not\equiv \epsilon_i\spmod4$, then $M$ and $M' = \z_2[x, e_i + e_n]$ satisfy all the desired properties. The same conclusion can be obtained for the case when there exists an index $j$ with $s+1\le j < n$ such that $\epsilon_j \not\equiv \epsilon_n\spmod4$. Now, we assume that
\[
 \epsilon_1 \equiv \cdots \equiv \epsilon_s \spmod4 \quad \text{and} \quad \epsilon_{s+1} \equiv \cdots \equiv \epsilon_n \spmod4\text.
\]
If $\rank L_0 = 4$, then the above conditions on $\epsilon_i$'s imply that $dL_0\equiv 1\spmod4$, which is absurd. Hence, we must have $\rank L_0\ge 5$, which implies that either $s\ge 3$ or $n-s\ge 3$. If we let
\[
 M' = \begin{cases}
  \z_2[x, e_1 + e_2 + e_3 + e_n] & \text{if $s\ge 3$},\\
  \z_2[x, e_1 + e_{n-2} + e_{n-1} + e_n] & \text{if $n-s\ge 3$},
 \end{cases}
\]
then $M$ and $M'$ satisfy all the required conditions.

Finally, assume that $\rank L_0 = 4$ and $dL_0 \equiv 1\spmod4$ or $\rank L_0 = 3$. Suppose that there exists an even unimodular binary sublattice $M'$ of $L$. Since we are assuming that $\kn (L_{\ge 1}) \subseteq 4\z_2$, $M'$ is represented by $L_0$ by virtue of \cite[Proposition~20]{OM58}. Note that
\[
 \bbH \mathbin{\perp} \ang{1, -1} \not\cong \bbA \mathbin{\perp} \ang{1, 3}\text, \quad \bbH \mathbin{\perp} \ang{1, 3} \not\cong \bbA \mathbin{\perp} \ang{1, -1}\text,
\]
and $\bbH \mathbin{\perp} \ang{\epsilon} \not\cong \bbA \mathbin{\perp} \ang{5\epsilon}$ for any $\epsilon \in \z_2^\times$. Hence, if $M'$ is represented by $L_0$, then $M'\cong M$. This proves the lemma.

(c) Suppose that $L_0 \cong J_1 \mathbin{\perp} \dotsb \mathbin{\perp} J_n$, where $J_i \cong \bbH$ or $\bbA$ in $e_{2i-1}, e_{2i}$ for $1\le i\le n$. Write $x_0 = \xi_1 e_1 + \dotsb + \xi_{2n} e_{2n}$, and assume that $\xi_1$ is odd. Let $M = \z_2[x, e_2]$. Then, $M$ satisfies all the desired properties for the former assertion. Now, suppose that $Q(x)\equiv 2\spmod4$. First, assume that $\kn (L_{\ge 1}) = 2\z_2$. Then, there exists $y\in L_{\ge 1}$ with $Q(y) \equiv 2\spmod4$. Note that $B(x, y) = B(x-x_0, y) \equiv 0\spmod2$. Hence, $M$ and $M' = \z_2[x, e_2 + y]$ satisfy all the required conditions. From now on, we assume that $\kn (L_{\ge 1}) \subseteq 4\z_2$.

Assume that $\rank L_0\ge 4$. If $\xi_3 \equiv \xi_4 \equiv 0\spmod2$, then $M$ and $M' = \z_2[x, e_2 + e_3 + e_4]$ satisfy all the desired properties. Hence, we may assume that $\xi_3$ is odd. If $J_1 \not\cong J_2$, then $M$ and $M' = \z_2[x, e_4]$ satisfy all the required conditions. If $J_1 \cong J_2$, then the same conclusion can be obtained by considering the isometry $\bbH \mathbin{\perp} \bbH \cong \bbA \mathbin{\perp} \bbA$. Now, assume that $\rank L_0 = 2$. Suppose that there exists an even binary unimodular sublattice $M'$ of $L$. Since we are assuming that $\kn (L_{\ge 1}) \subseteq 4\z_2$, $M'$ is represented by $L_0$ by \cite[Proposition~20]{OM58}. Hence, $M' \cong L_0 \cong M$. This proves the lemma.
\end{proof}

\begin{lem}\label{lem:isoiso}
Let $L$ be a binary $\z_2$-lattice such that $\ks L = \z_2$ and $\q_2 L \cong \bbH$. Let $s$ and $t$ be integers such that $t > 2s = \ord_2 dL$. Let $z$ be a primitive vector in $L$ such that $\ord_2 Q(z) = t+1$, and let $w$ be a vector in $L$ such that $\ord_2 B(z, w) = t$. Then, $\ord_2 Q(w) \ge t+2$.
\end{lem}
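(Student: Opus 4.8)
The plan is to reduce everything to a single determinant identity together with the mod-$8$ behaviour of $2$-adic squares. Since $z$ is primitive, extend it to a basis $\{z,z'\}$ of $L$ and write $w=\alpha z+\beta z'$ with $\alpha,\beta\in\z_2$. The starting point is the change-of-basis identity for the Gram determinant of the pair $(z,w)$, namely
\[
 Q(z)Q(w)-B(z,w)^2=\beta^2\,d,
\]
where $d:=Q(z)Q(z')-B(z,z')^2$ is the Gram determinant in the basis $\{z,z'\}$, a representative of $dL$ with $\ord_2 d=2s$.

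First I would pin down the relevant $2$-adic orders. Because $\ord_2 Q(z)=t+1>2s$, the product $Q(z)Q(z')$ has order $>2s=\ord_2 d$, so reading $d=Q(z)Q(z')-B(z,z')^2$ as $B(z,z')^2=Q(z)Q(z')-d$ forces $\ord_2 B(z,z')=s$. Feeding this into $B(z,w)=\alpha Q(z)+\beta B(z,z')$ and using $\ord_2(\alpha Q(z))\ge t+1>t=\ord_2 B(z,w)$, the term $\beta B(z,z')$ must realise the order $t$, giving $\ord_2\beta=t-s$ (in particular $\beta$ is a nonzero non-unit). Consequently both summands on the right of $Q(z)Q(w)=\beta^2 d+B(z,w)^2$ have order \emph{exactly} $2t$, since $\ord_2(\beta^2 d)=2(t-s)+2s=2t$ and $\ord_2 B(z,w)^2=2t$.

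The crux is then a cancellation modulo $8$. Writing $\beta^2 d=2^{2t}A$ and $B(z,w)^2=2^{2t}B$ with $A,B\in\z_2^\times$, the quantity $B=(B(z,w)/2^t)^2$ is a unit square, hence $B\equiv1\pmod8$; and $A=(\beta/2^{t-s})^2\,(d/2^{2s})\equiv d/2^{2s}\pmod8$, because $\beta/2^{t-s}$ is a unit and every unit square is $\equiv1\pmod8$. Here hyperbolicity enters decisively: since $\q_2 L\cong\bbH$ has discriminant $-1$ modulo squares, the representative $d$ satisfies $d=-c^2$ for some $c\in\q_2^\times$ with $\ord_2 c=s$, so $d/2^{2s}=-(c/2^s)^2\equiv-1\equiv7\pmod8$. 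Thus $A\equiv7\pmod8$ and $A+B\equiv0\pmod8$, whence $\ord_2\big(Q(z)Q(w)\big)=2t+\ord_2(A+B)\ge 2t+3$. Since $\ord_2 Q(z)=t+1$, this forces $\ord_2 Q(w)\ge t+2$.

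The main obstacle is precisely this last step. A priori $\beta^2 d$ and $B(z,w)^2$ merely have order $2t$, which on its own would only give $\ord_2 Q(w)\ge t-1$, far short of the claim; the entire content of the lemma is that these two order-$2t$ terms cancel to order at least $2t+3$. That cancellation is exactly what hyperbolicity supplies through $d/2^{2s}\equiv-1\pmod8$, and it genuinely fails for a non-hyperbolic binary space (for instance one with $d/2^{2s}\equiv3\pmod8$, where $A+B\equiv4\pmod8$ and $\ord_2 Q(w)$ can be as small as $t+1$). I would also note that the hypothesis $t>2s$ is used twice—to guarantee $\ord_2 B(z,z')=s$ and to keep $\ord_2\beta=t-s$ positive—so no slack is being wasted.
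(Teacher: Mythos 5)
Your proof is correct, and it takes a genuinely leaner route than the paper's. The paper argues in two stages: it first establishes the intermediate bound $\ord_2 Q(w) \ge t$ by a case analysis on the norm of $L$ — a direct coordinate computation when $L \cong \bbH$, and, when $\kn L = \z_2$, an adapted basis $L \cong \left(\begin{smallmatrix}\epsilon & \alpha\\ \alpha & 0\end{smallmatrix}\right)$ together with a further basis change to normalize $\ord_2 z_1$ — and only then writes $Q(z) = 2^{t+1}\epsilon$, $Q(w) = 2^t\alpha$, computes $d(\z_2[z,w]) \equiv 2^{2t}(2\epsilon\alpha - 1) \spmod{2^{2t+3}}$, and uses $d(\q_2[z,w]) = -1$ to force $\alpha \equiv 0 \spmod4$. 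You bypass the first stage entirely: working in a single basis $\{z, z'\}$ extending the primitive vector $z$, the ultrametric comparisons pin down $\ord_2 B(z,z') = s$ and $\ord_2 \beta = t-s$ exactly, so both terms in $Q(z)Q(w) = \beta^2 d + B(z,w)^2$ have order exactly $2t$, and the hyperbolicity mechanism ($d \in -(\q_2^\times)^2$ together with unit squares being $\equiv 1 \spmod8$) delivers the cancellation to order at least $2t+3$ in one stroke. The essential engine is the same in both proofs — the discriminant of $\z_2[z,w]$ being $-1$ times a square is what buys the extra two powers of $2$ — but your version avoids both the case distinction on $\kn L$ and the preliminary bound $\ord_2 Q(w)\ge t$, and it in fact only uses $t \ge 2s$ rather than the stated $t > 2s$. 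One small inaccuracy in your closing remark: the positivity of $\ord_2\beta = t-s$ is never actually needed, so the hypothesis on $t$ enters only once, to guarantee $\ord_2\bigl(Q(z)Q(z')\bigr) > \ord_2 d$ and hence $\ord_2 B(z,z') = s$; this does not affect the correctness of the argument.
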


\begin{proof}
First, we claim that $\ord_2 Q(w) \ge t$. Assume that $L$ is even. Let $e_1, e_2$ be a basis for $L$ such that $L \cong \bbH$ in $e_1, e_2$. Write $z = z_1 e_1 + z_2 e_2$. We may assume that $z_1 = 1$. Then, $z_2 \equiv 2^t \spmod{2^{t+1}}$. Write $w = w_1 e_1 + w_2 e_2$. Since
\[
 0 \equiv B(z, w) = z_1 w_2 + z_2 w_1 \equiv w_2 \pmod{2^t}\text,
\]
we have $Q(w) \equiv 0 \spmod{2^{t+1}}$, as desired. Now, we assume that $\kn L = \z_2$. Let $e_1, e_2$ be a basis for $L$ such that $L \cong \left(\begin{smallmatrix}\epsilon & \alpha \\ \alpha & 0\end{smallmatrix}\right)$ in $e_1, e_2$, where $\epsilon$ is a unit in $\z_2$ and $\alpha$ is an integer in $\z_2$ such that $\ord_2 \alpha = s$. Note that $Q(\zeta_1 e_1 + \zeta_2 e_2) = \zeta_1(\epsilon\zeta_1 + 2\alpha\zeta_2)$. Hence,
\[
 \ord_2 Q(\zeta_1 e_1 + \zeta_2 e_2) \begin{cases}
  = 2\ord_2 \zeta_1 & \text{if $\ord_2 \zeta_1 \le s$,}\\
  \ge 2s+2 & \text{if $\ord_2 \zeta_1 \ge s+1$.}
 \end{cases}
\]
Write $z = z_1 e_1 + z_2 e_2$. Since $Q(z)$ is even, $z_1$ cannot be a unit. This implies that $z_2$ is a unit, and we have
\[
 \ord_2 Q(z) \begin{cases}
  = 2\ord_2 z_1 & \text{if $\ord_2 z_1 \le s$,}\\
  \ge 2s+3 & \text{if $\ord_2 z_1 = s+1$,}\\
  = s+1 + \ord_2 z_1 & \text{if $\ord_2 z_1 \ge s+2$.}
 \end{cases}
\]
Since $t > 2s$, we have $\ord_2 z_1 \ge s+1$ and $t \ge 2s+2$. Note that we have $\ord_2 z_1 = s+1$ or $t-s$. If $\ord_2 z_1 = s+1$, then $\ord_2 (\epsilon z_1 + 2\alpha z_2) = t-s$. Hence, by changing basis from $e_1, e_2$ to $e_1, -2\alpha e_1 + \epsilon e_2$ and converting coordinates $z_1$, $z_2$, $w_1$, $w_2$ accordingly if necessary, we may assume that $\ord_2 z_1 = t-s$. Write $w = w_1 e_1 + w_2 e_2$. Note that $B(z, w) = \epsilon z_1 w_1 + \alpha (z_1 w_2 + z_2 w_1)$ and $\ord_2(\epsilon z_1 + \alpha z_2) = s$. Since
\[
 0 \equiv B(z, w) \equiv (\epsilon z_1 + \alpha z_2) w_1 \pmod{2^t}\text,
\]
we have $w_1 \equiv 0 \spmod{2^{t-s}}$. Hence, $\ord_2 Q(w) = \ord_2 w_1 + \ord_2(\epsilon w_1 + 2\alpha w_2) \ge t$, as desired.

Now, assuming that $\ord_2 Q(w) \ge t$, we prove that $\ord_2 Q(w) \ge t+2$. Write $Q(z) = 2^{t+1} \epsilon$ and $Q(w) = 2^t \alpha$ for some $\epsilon \in \z_2^\times$ and $\alpha \in \z_2$. Then, $d(\z_2[z, w]) \equiv 2^{2t}(2\epsilon\alpha - 1) \spmod{2^{2t+3}}$. Since $z$ and $w$ are linearly independent, $\q_2[z, w] \cong \bbH$, which implies that $d(\q_2[z, w]) = -1$. Hence, we must have $\alpha \equiv 0\spmod4$, and the result follows directly from this.
\end{proof}

\begin{thm}\label{thm:48}
No $\z_2$-lattice of rank $8$ is primitively $4$-universal. Therefore, we have $u^\ast_{\z_2}(4) = 9$.
\end{thm}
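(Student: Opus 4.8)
The plan is to follow the strategy of Theorem~\ref{thm:36}. By Corollary~\ref{cor:corindgen}(a) together with \cite[Theorem~1.3]{HH23}, any primitively $4$-universal $\z_2$-lattice of rank $8$ lies on the hyperbolic space $\bbH^4$ and is in particular $4$-universal, hence is a unit scaling of one of the seven lattices (A)--(G) listed above. It therefore suffices to prove that each of these seven lattices fails to primitively represent at least one $\z_2$-lattice of rank $4$.

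The main mechanism I would use is a rank-three analogue of the complement argument in Theorem~\ref{thm:36}. Each of (A)--(G) has a unit entry in its diagonal part, so each primitively represents a ternary unimodular lattice $K\cong\bbH\mathbin{\perp}\ang{\epsilon}\cong\ang{1,-1,\epsilon}$ that splits off; by \cite[Theorem~91:9]{OM} and \cite[Corollary~93:14a]{OM} the orthogonal complement $K^\perp$ has rank $5$ and an isometry class independent of the chosen copy of $K$. If $L$ were primitively $4$-universal, then for every $k\in\z_2$ it would primitively represent the quaternary lattice $K\mathbin{\perp}\ang{k}$; since $K$ is unimodular it splits off in any such representation, and the image of $\ang{k}$ then lands as a primitive vector in $K^\perp$. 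Consequently $K^\perp$ would be primitively $1$-universal, so it suffices, for each $L$, to choose $K$ so that $K^\perp$ is not primitively $1$-universal.

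For the families whose diagonal part carries a deep modular tail (the $2^{2t}$, $2^{2t+1}$, \dots, $2^{2t+4}$ entries with $t$ large), the complement $K^\perp$ manifestly represents only even integers, or omits an entire residue class modulo $8$, and is thus not $1$-universal. The genuinely delicate situation is when $K^\perp$ still has full $2$-adic support and is itself primitively $1$-universal (for instance (A) with $t=0$, where $K^\perp\cong\bbH^2\mathbin{\perp}\ang{1}$ is primitively $1$-universal by Lemma~\ref{lem:easy2pnu}). For such cases the split-off reduction collapses, and I would instead produce a quaternary target lattice directly, the obstruction being supplied by Lemma~\ref{lem:no4mod8}: one exhibits inside $L$ a configuration forcing a copy of $\bbA\mathbin{\perp}\ang{1,-1}$ (or, for the scaled families, of a lattice satisfying the hypotheses of Lemma~\ref{lem:no4mod8}(b) with bottom scale $2^{2s}$), so that an integer congruent to $2^{2s+2}$ modulo $2^{2s+3}$ cannot be primitively represented. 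Here Lemmas~\ref{lem:eus} and~\ref{lem:isoiso} are the technical engines: Lemma~\ref{lem:eus} dictates which binary even unimodular blocks $\bbH$ or $\bbA$ can contain a prescribed vector, thereby constraining how any primitive representation interacts with the unimodular part of $L$, while Lemma~\ref{lem:isoiso} controls the $2$-adic orders of $Q$ on the modular part and eliminates competing representations.

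The hardest part will be exactly these minimal, full-support cases, to be handled uniformly in the parameter $t$. There the choice of quaternary obstruction must be made so that the conclusion of Lemma~\ref{lem:no4mod8} applies for the correct scale $s$, which requires careful bookkeeping of the scales $2^{2s}$ versus $2^{2s+1}$ in the Jordan splittings of $L$ and of the candidate target. A secondary point that must be checked with care, underpinning the complement argument, is that the rank-five orthogonal complement really is well-defined up to isometry over $\z_2$; this is what legitimizes reducing to the primitive $1$-universality of $K^\perp$.
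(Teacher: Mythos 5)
Your opening reduction (unit scalings of the seven families (A)--(G) via \cite[Theorem~1.3]{HH23} and Corollary~\ref{cor:corindgen}, plus well-definedness of complements of split unimodular sublattices via \cite[Theorem~91:9]{OM} and \cite[Corollary~93:14a]{OM}) matches the paper. But your main mechanism --- split off a ternary unimodular $K\cong\ang{1,-1,\epsilon}$ and show the rank-$5$ complement $K^\perp$ fails to be primitively $1$-universal --- provably cannot succeed, and not only in the ``minimal'' cases you flag. The unimodular Jordan component of each of (A)--(G) has rank at least five, so after removing a ternary $K$ the complement retains an odd unimodular component of rank at least two together with unit norms; your claim that for deep tails $K^\perp$ ``manifestly represents only even integers, or omits an entire residue class modulo $8$'' is false, and in the cases one checks (e.g.\ (A) and (B), any $t$) the complement contains a split $\ang{1,-1,\epsilon'}\cong\bbH\mathbin{\perp}\ang{\epsilon'}$ and is primitively $1$-universal by Lemma~\ref{lem:easy2pnu}(a). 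Worse, \emph{no} obstruction built from a target with a unary orthogonal summand can work: by Lemma~\ref{lem:48excep}, the family-(A) lattice $\bbH^3\mathbin{\perp}\ang{1,-1}$ ($t=0$) primitively represents every quaternary $\z_2$-lattice except $\bbA\mathbin{\perp}2\bbA$, and $\bbA\mathbin{\perp}2\bbA$ has no orthogonal rank-one summand (both Jordan components are even). So every target of the form $T\mathbin{\perp}\ang{m}$ is primitively represented and witnesses nothing; the obstructions must be, as in the paper's table, lattices $\ell_0\mathbin{\perp}2^{2t+u+1}\bbA$ with $\ell_0\in\{\bbA,\ang{1,3}\}$ binary unimodular, i.e.\ with an even \emph{binary} tail.

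Your fallback for the hard cases is a plan, not an argument, and in its stated form it misapplies the key lemma. Lemma~\ref{lem:no4mod8} is a rank-one (integer-congruence) obstruction valid for lattices on the quaternary space $\q_2(\bbA\mathbin{\perp}\ang{1,-1})$ and its scalings; it applies neither to $L$ itself (which primitively represents all of $\z_2$) nor to a rank-$5$ complement $K^\perp$, whose underlying space has the wrong dimension. In the paper it enters only at the endgame of an intrinsically binary analysis: one splits off $\ell_0$, reduces by cancellation to showing $2^{2t+u+1}\bbA$ is not primitively represented by the rank-$6$ complement $M$, takes the two vectors $z,w$ with $Q(z)=Q(w)=2^{2t+u+2}$ and $B(z,w)=2^{2t+u+1}$, normalizes a basis of $M$ satisfying conditions (i)--(iii) --- which uses Lemma~\ref{lem:eus} and requires a separate ad hoc computation when $t=0$ (the $z^\dag,w^\dag$ case analysis in case (C)) --- then uses Lemma~\ref{lem:isoiso} to force $Q(w_1e_1+w_2e_2)\equiv0\pmod{2^{2t+u+3}}$ on the isotropic binary block containing $z$, and only then applies Lemma~\ref{lem:no4mod8} to the residual component of $w$. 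Your sentence ``one exhibits inside $L$ a configuration forcing a copy of $\bbA\mathbin{\perp}\ang{1,-1}$'' gestures at this but omits the entire two-vector primitivity analysis, which is the actual content of the theorem. As written, the proposal has a genuine gap: Plan~A fails for every family, and Plan~B cannot be executed without reconstructing the paper's binary-target argument from scratch.
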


\begin{proof}
It suffices to show that the $\z_2$-lattice $\ell$ is not primitively represented by the $\z_2$-lattice $L$ for each pair $(\ell, L)$ which is given in the table below. Let $(u, M)$ be the pair given in the same row with the pair $(\ell, L)$ in the table. Let $\ell = \ell_0 \mathbin{\perp} \ell'$, where $\ell_0$ is the unimodular component of $\ell$, and $\ell' \cong 2^{2t+u+1}\bbA$. For any representation $\phi : \ell_0 \to L$, the orthogonal complement of $\phi(\ell_0)$ in $L$ is always isometric to $M$ by cancellation laws (see \cite[Proposition~93:14a]{OM} and \cite[Theorem~5.3.6]{Ki}), independently of the choice of the representation $\phi$. Hence, it suffices to show that the $\z_2$-lattice $\ell'$ is not primitively represented by the $\z_2$-lattice $M$.

\begin{center}
\noindent\begin{tabular}{|c|c|c|c|c|}
 \hline
  & $\ell$ & $L$ & $M$ & $u$\\
 \hline
 (A) & $\bbA\mathbin{\perp} 2^{2t+1}\bbA$ & $\bbH^3\mathbin{\perp}\ang{-1, 2^{2t}}$ & $\bbH\mathbin{\perp}\ang{1, 1, 5, 2^{2t}}$ & $0$\\
 \hline
 (B) & $\ang{1,3}\mathbin{\perp} 2^{2t+2}\bbA$ & $\bbH^2\mathbin{\perp}\ang{1,-1,-2, 2^{2t+1}}$ & $\bbH\mathbin{\perp}\ang{1, 1, 10, 2^{2t+1}}$ & $1$\\
 \hline
 (C) & $\bbA\mathbin{\perp} 2^{2t+3}\bbA$ & $\bbH^2\mathbin{\perp}\ang{-1,-1,4, 2^{2t+2}}$ & $\bbH\mathbin{\perp}\ang{1, 1, 20, 2^{2t+2}}$ & $2$\\
 \hline
 (D) & $\bbA\mathbin{\perp} 2^{2t+3}\bbA$ & $\bbH^2\mathbin{\perp}\ang{-1, 2, -2, 2^{2t+2}}$ & $\bbH\mathbin{\perp}\ang{5, 2, 2, 2^{2t+2}}$ & $2$\\
 \hline
 (E) & $\ang{1,3}\mathbin{\perp} 2^{2t+4}\bbA$ & $\bbH^2\mathbin{\perp}\ang{-1, -2, 4, 2^{2t+3}}$ & $\bbH\mathbin{\perp}\ang{1, 10, 4, 2^{2t+3}}$ & $3$\\
 \hline
 (F) & $\bbA\mathbin{\perp} 2^{2t+5}\bbA$ & $\bbH^2\mathbin{\perp}\ang{-1, -2, 8, 2^{2t+4}}$ & $\bbH\mathbin{\perp}\ang{5, 2, 8, 2^{2t+4}}$ & $4$\\
 \hline
 (G) & $\bbA\mathbin{\perp} 2^{2t+5}\bbA$ & $\bbH^2\mathbin{\perp}\ang{-1, 2, -8, 2^{2t+4}}$ & $\bbH\mathbin{\perp}\ang{1, 6, -8, 2^{2t+4}}$ & $4$\\
 \hline
\end{tabular}
\end{center}

Suppose that there exists a primitive representation from $\ell'$ to $M$. Then, there are vectors $z$, $w\in M$ such that
\[
 Q(z) = Q(w) = 2^{2t+u+2}\text,\quad B(z, w) = 2^{2t+u+1}\text,
\]
and $\z_2[z, w]$ is a primitive sublattice of $M$.

Suppoose that there exists a basis $e_1, \dots, e_6$ for $M$ that satisfies all of the following conditions:

\begin{enumerate}[(\roman{enumi})]
\item $\ord_2 Q(e_6) = 2t+u$, and $\z_2 e_6$ splits $M$;
\item $z = z_1 e_1 + z_2 e_2$, where $J = \z_2[e_1, e_2]$ is isotropic, splits $M$, and satisfies
\[
 2t+u+1 > \ord_2 dJ - \ord_2 \ks J\text;
\]
\item For $w = \sum_{i=1}^6 w_i e_i$, the vector $\sum_{i=3}^6 w_i e_i$ is primitive in $M$.
\end{enumerate}
By the condition (ii) and Lemma~\ref{lem:isoiso}, we have
\[
 Q(w_1 e_1 + w_2 e_2) \equiv 0\spmod{2^{2t+u+3}}\text.
\]
Hence, by the condition (iii), $\sum_{i=3}^6 w_i e_i$ is a primitive vector in $\z_2[e_3, \dots, e_6]$ such that $Q(\sum_{i=3}^6 w_i e_i) \equiv 2^{2t+u+2}\spmod{2^{2t+u+3}}$. However, this is impossible by the condition (i) and Lemma \ref{lem:no4mod8}. This proves the theorem.

Now, we prove the existence of a basis for $M$ that satisfies the conditions (i), (ii), and (iii). Since all the other cases can be proved in similar manners, we only provide the proof of the case (C). Assume that $M\cong\ang{1, 1, 1, 3, 4, 2^{2t+2}}$ in $e_1, \dots, e_6$. Write $z = \sum_{i=1}^6 z'_i e'_i$ and $w = \sum_{i=1}^6 w'_i e'_i$. First, assume that $t=0$. Write $z^\dag = z'_5 e'_5 + z'_6 e'_6$, $w^\dag = w'_5 e'_5 + w'_6 e'_6$. Suppose if possible that $\z_2[z^\dag, w^\dag]$ is primitive. Since $\z_2[z^\dag, w^\dag] \cong \ang{4, 4}$, exactly one among the following conditions holds:
\begin{enumerate}[(\roman{enumi})]
 \item $Q(z^\dag)\equiv 4\spmod{32}$, $B(z^\dag, w^\dag)\equiv 0\spmod{16}$, $Q(w^\dag)\equiv 4\spmod{32}$;
 \item $Q(z^\dag)\equiv 4\spmod{32}$, $B(z^\dag, w^\dag)\equiv 8\spmod{16}$, $Q(w^\dag)\equiv 20\spmod{32}$;
 \item $Q(z^\dag)\equiv 20\spmod{32}$, $B(z^\dag, w^\dag)\equiv 8\spmod{16}$, $Q(w^\dag)\equiv 4\spmod{32}$;
 \item $Q(z^\dag)\equiv 20\spmod{32}$, $B(z^\dag, w^\dag)\equiv 0\spmod{16}$, $Q(w^\dag)\equiv 20\spmod{32}$;
 \item $Q(z^\dag)\equiv 4\spmod{16}$, $B(z^\dag, w^\dag)\equiv 4\spmod8$, $Q(w^\dag)\equiv 8\spmod{32}$;
 \item $Q(z^\dag)\equiv 8\spmod{32}$, $B(z^\dag, w^\dag)\equiv 4\spmod8$, $Q(w^\dag)\equiv 4\spmod{16}$.
\end{enumerate}
Since
\begin{multline*}
 \quad\quad\quad\begin{pmatrix} Q(z-z^\dag) & B(z-z^\dag, w-w^\dag) \\ B(z-z^\dag, w-z^\dag) & Q(w-w^\dag) \end{pmatrix}\\
 = \begin{pmatrix} Q(z) & B(z, w) \\ B(z, w) & Q(w) \end{pmatrix} - \begin{pmatrix} Q(z^\dag) & B(z^\dag, w^\dag) \\ B(z^\dag, w^\dag) & Q(w^\dag) \end{pmatrix}\text,\quad\quad\quad
\end{multline*}
in any case, we must have $\z_2[z-z^\dag, w-w^\dag] \cong \ang{12, -4}$, which contradicts the fact that $\ang{3, -1}$ is not represented by $\ang{1, 1, 1, 3}$ over $\q_2$. Hence, we may assume that at least one among $z'_1$, \dots, $z'_4$ is odd. Note that $Q(z) \equiv 0 \not\equiv 1+1+1+3 \spmod4$. Hence, we may apply Lemma~\ref{lem:eus} to $M$ and $z$ to obtain a basis $e_1, \dots, e_6$ for $M$ such that $M \cong \bbH \mathbin{\perp} \ang{1, 5, 4, 4}$ in the basis and such that $z = z_1 e_1 + z_2 e_2$. We may assume that $z_1 = 1$. Then, $z_2$ is even. Write $w = \sum_{i=1}^6 w_i e_i$. Since
\[
 B(z, w) = z_1 w_2 + z_2 w_1
\]
is even, $w_2$ also is even. Therefore, $e_1, \dots, e_6$ is the basis which we are seeking for. Next, assume that $t\ge 1$. We may assume that at least one among $z'_1$, \dots, $z'_5$ is odd. If $z'_i$ is odd for some $1\le i\le 4$, then we may apply Lemma~\ref{lem:eus} to $M$ and $z$ to obtain the basis which we are seeking for. Suppose that $z'_1 \equiv \cdots \equiv z'_4 \equiv 0\spmod2$ and $z'_5$ is odd. Let $e_1 = z'_5 e'_5 + z'_6 e'_6$ and $e_2 = \frac12(z - e_1)$. Since $z'_5$ is odd, we have $Q(e_1) \equiv 4\spmod{16}$. Hence, $\z_2 e_1$ splits $N^\dag = \z_2[e'_5, e'_6]$. Note that
\[
 Q(e_1)\ \equiv\ 4\ \text{ or }\ 20\pmod{32}\text{,}
\]
where the latter may occur only when $t=1$. Thus, there exists a basis $e_1, e_6$ for $N^\dag$ such that $N^\dag \cong \ang{Q(e_1), 2^{2t+2}}$ or $\ang{Q(e_1), 80}$ in $e_1, e_6$. Since $Q(e_1) \equiv 4\spmod{16}$, we have $Q(e_2) \equiv -1\spmod4$. Since $Q(e_1) + 4Q(e_2) = Q(e_1 + 2e_2) = Q(z)\equiv 0\spmod{32}$, we have
\[
 Q(e_2)\ \equiv\ -1\ \text{ or }\ 3\pmod8\text{,}
\]
according as $Q(e_1) \equiv 4$ or $20\spmod{32}$. Hence, in any case, $\z_2 e_2$ splits $N = \z_2[e'_1, \dots, e'_4]$. Thus, there exists a basis $e_2, \dots, e_5$ for $N$ such that $N \cong \ang{Q(e_2), 1, 1, 5}$ or $\ang{Q(e_2), 1, 1, 1}$ in this basis. So far, we found a basis $e_1, \dots, e_6$ for $M$ such that
\[
 M\ \cong\ \ang{Q(e_1), Q(e_2), 1, 1, 5, 2^{2t+2}}\ \text{ or }\ \ang{Q(e_1), Q(e_2), 1, 1, 1, 80}
\]
in this basis and such that $z = e_1 + 2 e_2$. Note that $2t+u+1 \ge 5 > 2 = \ord_2 Q(e_1)$. Write $w = \sum_{i=1}^6 w_i e_i$. Since $B(z, w) = Q(e_1) w_1 + 2 Q(e_2) w_2 \equiv 0\spmod4$, $w_2$ also is even. Therefore, $e_1, \dots, e_6$ is the basis which we are seeking for.
\end{proof}

Finally, we prove that $u^\ast_{\z_2}(n) = 2n$ for any $n\ge 5$. Recall that, for a local ring $R$ and for an $R$-lattice $L$, $Q^\ast(L)$ means the set of all $Q(v)$, where $v$ runs over all primitive vectors in $L$.

\begin{lem}\label{lem:easypnucap}
Let $R$ be an unramified dyadic local ring and let $\ell$ be an even $R$-lattice of rank $n\ge 1$. If $J$ is an $R$-lattice such that $Q^\ast(\ell) \cap Q^\ast(J) \ne \varnothing$, then $\ell$ is primitively represented by $\bbH^{n-1} \mathbin{\perp} J$.
\end{lem}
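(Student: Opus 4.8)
The plan is to build an explicit primitive representation $\sigma\colon \ell \to J \perp \bbH^{n-1}$ in which the hypothesis is used to map one distinguished basis vector of $\ell$ into $J$, while the $n-1$ hyperbolic planes are used to ``gauge away'' all the remaining entries of the Gram matrix of $\ell$. By hypothesis there is a common value $c \in Q^\ast(\ell)\cap Q^\ast(J)$, so I would fix a primitive vector $v\in\ell$ with $Q(v)=c$ and a primitive vector $z\in J$ with $Q(z)=c$. Since $v$ is primitive, complete it to a basis $f_1=v, f_2,\dots,f_n$ of $\ell$ and set $g_{ij}=B(f_i,f_j)$; because $\ell$ is even, each $g_{ii}\in 2R$, so $g_{kk}/2\in R$ (this is exactly where evenness enters). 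Write $\bbH_k = R x_k + R y_k$ for $2\le k\le n$, with $Q(x_k)=Q(y_k)=0$ and $B(x_k,y_k)=1$.

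The construction I propose is
\[
\sigma(f_1) = z + \sum_{k=2}^n g_{1k}\, y_k, \qquad \sigma(f_k) = x_k + \tfrac{g_{kk}}{2}\,y_k + \sum_{l=2}^{k-1} g_{kl}\, y_l \quad (2\le k\le n).
\]
First I would check that $\sigma$ is a representation, i.e.\ that the images have Gram matrix $(g_{ij})$. This is a routine computation using that distinct $y$'s are mutually orthogonal and isotropic and that $z$ is orthogonal to every $x_k,y_k$: one finds $Q(\sigma(f_1))=Q(z)=c=g_{11}$, then $B(\sigma(f_1),\sigma(f_k))=g_{1k}$ (realized by $y_k$ pairing against $x_k$), $Q(\sigma(f_k))=2\cdot\tfrac{g_{kk}}{2}=g_{kk}$, and $B(\sigma(f_k),\sigma(f_l))=g_{kl}$ for $2\le l<k$.

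The step requiring the most care is primitivity, namely that $\sigma(\ell)$ is a direct summand of $T:=J\perp\bbH^{n-1}$; equivalently that $\sigma(\ell)$ is saturated, so $\sum_i \lambda_i\sigma(f_i)\in T$ with $\lambda_i\in F$ forces all $\lambda_i\in R$. The design of $\sigma$ is meant to make this transparent: for $k\ge 2$, the integral coordinate functional dual to the primitive vector $x_k\in\bbH_k$ evaluates $\sum_i\lambda_i\sigma(f_i)$ to $\lambda_k$, since only $\sigma(f_k)$ contains $x_k$ and with coefficient $1$; hence $\lambda_k\in R$. Moreover $\sigma(f_2),\dots,\sigma(f_n)$ have no $J$-component whereas $\sigma(f_1)$ has $J$-component exactly $z$, so the integral functional dual to the primitive vector $z\in J$ returns $\lambda_1$, forcing $\lambda_1\in R$. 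The only mild subtleties I anticipate are confirming that $z$ primitive in $J$ genuinely yields an integral dual functional on $T$ (extend $z$ to a basis of $J$) and that the $\sigma(f_i)$ are $F$-independent so that saturation is the correct criterion (automatic, as $\ell$ is nondegenerate of rank $n$); neither is a real obstacle. An alternative but longer route is induction on $n$, peeling off one hyperbolic plane at a time in the spirit of Lemmas~\ref{lem:easypnu} and~\ref{lem:easy2pnu}, but the explicit construction above appears cleanest and makes primitivity essentially immediate.
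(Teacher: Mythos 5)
Your proof is correct, and it reaches the conclusion by a genuinely different, more uniform construction than the paper's. The paper first decomposes $\ell$ into modular components $K_1 \mathbin{\perp} \dotsb \mathbin{\perp} K_r$ of rank one ($\ang{2\alpha}$) or rank two ($\alpha\bbH$ or $\alpha\bbA$), fixes the common value $\beta$ with primitive vectors $w\in\ell$ and $z\in J$, and then, by a case analysis on the type of each $K_i$, builds tailored pairs $x_j, y_j$ inside $\bbH^{n-1}$ with the $x$-span and $y$-span orthogonal, so that $R[x_1,\dots,x_{n-1},\,\beta_1 y_1+\dotsb+\beta_{n-1}y_{n-1}+z]$ is a primitive sublattice isometric to $\ell$; the distinguished vector $w$ sits in the last basis vector, corrected by the $y_j$'s component by component. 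You instead complete $v$ to an arbitrary basis $f_1=v,f_2,\dots,f_n$ of $\ell$ and write a single triangular embedding: each $\sigma(f_k)$, $k\ge 2$, carries $x_k$ with coefficient $1$ while the isotropic $y_l$'s absorb the off-diagonal entries $g_{kl}$ and the halved diagonal entries $g_{kk}/2$ (the halving is precisely where evenness of $\ell$ enters, as you note), and $\sigma(f_1)=z+\sum_k g_{1k}y_k$ places the entire $J$-component on $z$. Your Gram-matrix verification is sound, since the $y$'s are isotropic and mutually orthogonal, $z\perp\bbH^{n-1}$, and $B(y_l,x_k)$ produces exactly the needed cross terms; and primitivity is indeed immediate from the coordinate matrix, because the $x_k$-coordinates recover $\lambda_2,\dots,\lambda_n$, the $z$-coordinate (after extending the primitive vector $z$ to a basis of $J$) recovers $\lambda_1$, and over the discrete valuation ring $R$ a saturated sublattice of a free lattice is a direct summand. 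What your version buys is economy and transparency: one formula handles all even $\ell$ at once, with no case analysis on Jordan-type components, and the unit coefficients on the $x_k$'s make primitivity visible by inspection; the paper's component-wise construction is longer but keeps explicit track of how each modular piece of $\ell$ and the distinguished primitive vector $w$ are realized, which is a stylistic rather than a mathematical advantage here.
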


\begin{proof}
Assume that $\ell \cong K_1 \mathbin{\perp} \dotsb \mathbin{\perp} K_r$, where $K_i$ is either a (proper modular) lattice of rank $1$ or an improper modular lattice of rank $2$ for each $i$ with $1\le i\le r$. Let $s_0 = 0$ and let $s_i = (\rank K_1) + \dotsb + (\rank K_i)$ for $1\le i\le r$. Note that $s_r = n$. Fix $\beta\in Q^\ast(\ell)\cap Q^\ast(J)$ and find primitive vectors $w\in \ell$ and $z\in J$ such that $Q(w) = \beta = Q(z)$. Write $w = w_1 + \dotsb + w_r$, where $w_i \in K_i$ for $1\le i\le r$. By renumbering indices if necessary, we may assume that $w_r$ is primitive in $K_r$.

Assume that $L \cong \bbH^{n-1} \mathbin{\perp} J$ in $e_1, e_2, \dots, e_m$. Note that $K_i$ is isometric to one of $\langle 2\alpha \rangle$, $\alpha\bbH$, or $\alpha\bbA$ for some integer $\alpha\in R$ for $1\le i\le r$. We define vectors $x_j$, $y_j\in L$ and integers $\beta_j \in R$ for $1\le j\le n-1$ as follows. Let $1\le i\le r-1$. First, suppose that $K_i \cong \langle 2\alpha \rangle$. Then, let
\begin{align*}
 x_{s_i} & = e_{2s_i - 1} + \alpha e_{2s_i}\text, & y_{s_i} & = e_{2s_i - 1} - \alpha e_{2s_i}\text.
\end{align*}
Note that $B(x_{s_i}, y_{s_i}) = 0$ and there is $\sigma: Rx_{s_i} \cong K_i$. Write $w_i = \beta_{s_i} \sigma(x_{s_i})$. Note that $Q(\beta_{s_i} y_{s_i}) = -Q(w_i)$. Now, suppose that $K_i \cong \alpha\bbH$. Then, let
\begin{align*}
 x_{s_i-1} & = e_{2s_i - 3}\text, & y_{s_i-1} & = e_{2s_i - 3} - \alpha e_{2s_i}\text,\\
 x_{s_i} & = \alpha e_{2s_i - 2} + e_{2s_i - 1}\text, & y_{s_i} & = e_{2s_i - 1}\text.
\end{align*}
Note that $B(R[x_{s_i-1}, x_{s_i}], R[y_{s_i-1}, y_{s_i}]) = 0$ and there is $\sigma: R[x_{s_i-1}, x_{s_i}] \cong K_i$. Write $w_i = \beta_{s_i-1} \sigma(x_{s_i-1}) + \beta_{s_i} \sigma(x_{s_i})$. Note that $Q(\beta_{s_i-1} y_{s_i-1} + \beta_{s_i} y_{s_i}) = -Q(w_i)$. Next, suppose that $K_i \cong \alpha\bbA$. Then, let
\begin{align*}
 x_{s_i-1} & = e_{2s_i - 3} + \alpha e_{2s_i - 2}\text, & y_{s_i-1} & = e_{2s_i - 3} - \alpha e_{2s_i - 2} + \alpha e_{2s_i}\text,\\
 x_{s_i} & = e_{2s_i - 3} + e_{2s_i - 1} + \rho\alpha e_{2s_i}\text, & y_{s_i} & = - e_{2s_i - 1} + \rho\alpha e_{2s_i}\text.
\end{align*}
Note that $B(R[x_{s_i-1}, x_{s_i}], R[y_{s_i-1}, y_{s_i}]) = 0$ and there is $\sigma: R[x_{s_i-1}, x_{s_i}] \cong K_i$. Write $w_i = \beta_{s_i-1} \sigma(x_{s_i-1}) + \beta_{s_i} \sigma(x_{s_i})$. Note that $Q(\beta_{s_i-1} y_{s_i-1} + \beta_{s_i} y_{s_i}) = -Q(w_i)$. Finally, if $K_r \cong \alpha\bbH$, then let $x_{n-1} = e_{2n-3}$, $y_{n-1} = \alpha e_{2n-2}$, and $\beta_{n-1} = 1$. If $K_r \cong \alpha\bbA$, note that $Q(w_r) = 2\epsilon\alpha$ for some unit $\epsilon$ in $R$. In that case, let $x_{n-1} = \rho \epsilon^{-1} e_{2n-3} + \alpha e_{2n-2}$, $y_{n-1} = e_{2n-3}$, and $\beta_{n-1} = 1$. Now, note that
\[
 R[x_1, \dots, x_{n-1}, \beta_1 y_1 + \dotsb + \beta_{n-1} y_{n-1} + z]
\]
is a primitive sublattice of $L$ that is isometric to $\ell$.
\end{proof}

\begin{lem}\label{lem:48excep}
The $\z_2$-lattice $\bbH^3\mathbin{\perp}\ang{1, -1}$ primitively represents all $\z_2$-lattices of rank $4$ except for the quaternary $\z_2$-lattice $\bbA\mathbin{\perp} 2\bbA$.
\end{lem}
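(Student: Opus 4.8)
The plan is to partition the rank-$4$ lattices $\ell$ according to their norm and to dispatch two of the resulting classes with the preparatory lemmas, isolating $\bbA\mathbin{\perp}2\bbA$ as the sole survivor. First suppose $\ell$ is not even, so that $\kn\ell=\z_2$. Then $\ell$ has a primitive vector of unit norm, whence $\ell\cong\ang{\epsilon}\mathbin{\perp}\ell'$ for a unit $\epsilon$ and a ternary $\ell'$. Since $\ang{1,-1}$ primitively represents every unit, Lemma~\ref{lem:easy2pnu}(b), applied with $J=\ang{1,-1}$, $K=\ang{\epsilon}$, and $n=3$, shows that $\bbH^3\mathbin{\perp}\ang{1,-1}$ primitively represents $\ell'\mathbin{\perp}\ang{\epsilon}\cong\ell$.

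Now assume $\ell$ is even. By Lemma~\ref{lem:1-1} we have $Q^\ast(\ang{1,-1})=\z_2^\times\cup 8\z_2$, and since $Q^\ast(\ell)\subseteq 2\z_2$, the intersection $Q^\ast(\ell)\cap Q^\ast(\ang{1,-1})$ is nonempty exactly when $\ell$ primitively represents some element of $8\z_2$ (a multiple of $8$, or $0$ if $\ell$ is isotropic). Whenever this holds, Lemma~\ref{lem:easypnucap}, with $n=4$ and $J=\ang{1,-1}$, yields a primitive representation of $\ell$ by $\bbH^3\mathbin{\perp}\ang{1,-1}$. Thus only the even lattices $\ell$ with $Q^\ast(\ell)\cap 8\z_2=\varnothing$ remain; equivalently, the even, anisotropic $\ell$ admitting no primitive vector $v$ with $\ord_2 Q(v)\ge 3$.

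The crux is to show that the sole such $\ell$ is $\bbA\mathbin{\perp}2\bbA$. Being then anisotropic of rank $4$, $\q_2\ell$ is the unique anisotropic quaternary $\q_2$-space, so its discriminant is trivial and $\ord_2\det\ell$ is even. The order hypothesis forces every Jordan component to have scale at most $4$, since a scale-$2^i$ component with $i\ge 3$ already produces a primitive vector of order $\ge 3$ or an isotropic one. Writing a Jordan splitting $\ell\cong\ell_0\mathbin{\perp}2\ell_1\mathbin{\perp}4\ell_2$ with each $\ell_j$ unimodular, evenness forces $\ell_0$ to be even unimodular, hence $\ell_0\in\{0,\bbA\}$ (a nonzero even unimodular anisotropic binary lattice is $\bbA$, while even unimodular lattices of rank $\ge 4$ are isotropic), and trivial discriminant forces $\rank\ell_1$ to be even, which together with $\rank\ell_0\in\{0,2\}$ limits the admissible ranks of $\ell_1,\ell_2$. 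A finite check over these types shows that any scale-$4$ component, and any odd-type scale-$2$ component compatible with trivial discriminant, forces a primitive vector of order $\ge 3$; the only surviving configuration is $\ell_0=\bbA$, $\ell_1=\bbA$, $\ell_2=0$, that is, $\ell\cong\bbA\mathbin{\perp}2\bbA$.

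That $\bbA\mathbin{\perp}2\bbA$ is genuinely not primitively represented by $\bbH^3\mathbin{\perp}\ang{1,-1}$ is exactly Theorem~\ref{thm:48}, case (A), with $t=0$: there $\bbH^3\mathbin{\perp}\ang{-1,2^{2t}}$ specializes to $\bbH^3\mathbin{\perp}\ang{-1,1}\cong\bbH^3\mathbin{\perp}\ang{1,-1}$ and $\bbA\mathbin{\perp}2^{2t+1}\bbA$ to $\bbA\mathbin{\perp}2\bbA$. Assembling the three cases proves the lemma. The main obstacle is the finite classification of the preceding paragraph: one must verify, running through the Jordan types allowed by the scale bound and by the parity of $\ord_2\det\ell$, that no scale-$4$ piece and no odd-type scale-$2$ piece can coexist with the condition that all primitive orders are $\le 2$, so that $\bbA\mathbin{\perp}2\bbA$ really is the unique exception.
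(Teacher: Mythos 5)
Your proof is correct, but it takes a genuinely different route from the paper's. The two opening dispatches coincide: odd lattices split off a unit and are handled by Lemma~\ref{lem:easy2pnu}(b), and even lattices with $Q^\ast(\ell)\cap 8\z_2\neq\varnothing$ fall to Lemma~\ref{lem:easypnucap}. Your use of $\beta=0$ there is a nice uniform shortcut the paper does not take (it is legitimate: an isotropic $\z_2$-lattice contains a primitive isotropic vector, and $(1,1)$ is one in $\ang{1,-1}$), disposing of all isotropic even lattices at once, where the paper instead splits off $2^a\bbH$ and invokes Lemma~\ref{lem:easypnueven}. From that point the paper stays constructive: it runs through the Jordan types with component norms in $\{2\z_2,4\z_2\}$, either splitting off $2^a\bbH$ or exhibiting an explicit primitive partial sum divisible by $8$ --- in the diagonal case $\ang{2^{a_1}\epsilon_1,\dotsc,2^{a_4}\epsilon_4}$ with $a_i\in\{1,2\}$, and in the residual case $\ang{4\epsilon_1,4\epsilon_2}\mathbin{\perp}\bbA$ where $4\epsilon_1+4\epsilon_2\equiv 0\pmod 8$. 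You instead characterize the obstruction set abstractly (even, anisotropic, no primitive $v$ with $\ord_2 Q(v)\ge 3$) and classify it; your constraints are all correct (scales at most $4$, $\ell_0\in\{0,\bbA\}$ by evenness and anisotropy, $\rank\ell_1$ even since the anisotropic quaternary space has trivial discriminant), and the enumeration does close with $\bbA\mathbin{\perp}2\bbA$ as sole survivor. What the paper's route buys is that every case ends in an explicit vector or splitting; what yours buys is a conceptual explanation of \emph{why} $\bbA\mathbin{\perp}2\bbA$ is the unique exception. Citing Theorem~\ref{thm:48}, case (A) with $t=0$, for the non-representability of $\bbA\mathbin{\perp}2\bbA$ is also apt --- the paper's own proof of this lemma establishes only the positive half.

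The one soft spot is that the ``finite check'' is asserted rather than performed, and it is the entire mathematical content of your third paragraph, so in a final write-up it must be executed; be aware that the required order-$\ge 3$ primitive vectors are sometimes \emph{mixed} rather than internal to a single Jordan component. For instance, for $\bbA\mathbin{\perp}\ang{2\epsilon_1,2\epsilon_2}$ no vector inside either component need have $\ord_2 Q\ge 3$ when $\epsilon_1\equiv\epsilon_2\pmod 4$; one must use $Q^\ast(\bbA)=2\z_2^\times$ (the scaled norm form of the unramified quadratic extension represents every unit primitively) to choose a primitive $x\in\bbA$ with $Q(x)\equiv -2\epsilon_1\pmod{16}$, so that $x+e_1$ is primitive with $Q(x+e_1)\in 8\z_2$. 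Similarly, for a component $\ang{4\delta_1,4\delta_2}$ take both coordinates odd and use that $\delta_1+\delta_2$ is even, and note $Q^\ast(4\bbA)=8\z_2^\times$. With these inputs every configuration other than $\bbA\mathbin{\perp}2\bbA$ produces a primitive value in $8\z_2$, so nothing in your plan fails --- it is merely incomplete as written.
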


\begin{proof}
Let $L \cong \bbH^3\mathbin{\perp}\ang{1, -1}$ and let $\ell$ be any $\z_2$-lattice of rank $4$ not isometric to $\bbA\mathbin{\perp} 2\bbA$. Since $Q^\ast(\ang{1, -1}) = \z_2^\times \cup 8\z_2$, $\ell$ is primitively represented by $L$ unless any Jordan decomposition $\ell = \ell_1 \mathbin{\perp} \dotsb \mathbin{\perp} \ell_t$ ($\ell_i$ is nonzero for any $1\le i\le t$) of $\ell$ satisfies $2\z_2 \supseteq \kn\ell_1 \supseteq \cdots \supseteq \kn\ell_t \supseteq 4\z_2$, by Lemmas~\ref{lem:easypnueven} and \ref{lem:easy2pnu}. Suppose that $\ell$ has such a Jordan decomposition. Assume that $\ell$ is orthogonally split by $2^a\bbH$ for some nonnegative integer $a$. Since $2^a\bbH$ is primitively represented by $\bbH\mathbin{\perp}\ang{1, -1}$, $\ell$ is primitively represented by $L$. Hence, we may assume that $\ell$ has a proper component. Assume that $\ell \cong \ang{2^{a_1}\epsilon_1, 2^{a_2}\epsilon_2, 2^{a_3}\epsilon_3, 2^{a_4}\epsilon_4}$. Since $a_1, a_2, a_3, a_4\in\{1, 2\}$, up to rearrangement, at least one among
\[
 2^{a_1}\epsilon_1 + 2^{a_2}\epsilon_2\text{,} \quad 2^{a_1}\epsilon_1 + 2^{a_2}\epsilon_2 + 2^{a_3}\epsilon_3\text{,} \quad 2^{a_1}\epsilon_1 + 2^{a_2}\epsilon_2 + 2^{a_3}\epsilon_3 + 2^{a_4}\epsilon_4
\]
is a multiple of $8$, so that it is primitively represented by $\ang{1, -1}$. Hence, $\ell$ is primitively represented by $L$ by Lemma~\ref{lem:easypnucap}. Finally, assume that $\ell \cong \ang{2^{a_1}\epsilon_1, 2^{a_2}\epsilon_2}\mathbin{\perp} 2^{a_3}\bbA$. Note that $a_1, a_2\in\{1, 2\}$ and $a_3\in\{0, 1\}$. If $a_1 = a_3 + 1$, then $\ell$ is split by $2^{a_3}\bbH$. If $a_1 = a_3 = 1$, then $\ell$ is an orthogonal sum of proper components. Hence, we may assume that $a_1 = a_2 = 2$ and $a_3 = 0$. Then, $2^{a_1}\epsilon_1 + 2^{a_2}\epsilon_2$ is a multiple of $8$. Hence, $\ell$ is primitively represented by $L$ by Lemma~\ref{lem:easypnucap}.
\end{proof}

\begin{thm}
For any $n\ge 5$, the $\z_2$-lattice $\bbH^{n-1} \mathbin{\perp} \ang{1, -1}$ is primitively $n$-universal. In particular, $u^\ast_{\z_2}(n) = 2n$ for any $n\ge 5$.
\end{thm}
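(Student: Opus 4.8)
The plan is to prove the statement by induction on $n\ge 5$, writing $L_n := \bbH^{n-1}\mathbin{\perp}\ang{1,-1}$, which has rank $2n$; since Theorem~\ref{thm:indgen} already gives $u^\ast_{\z_2}(n)\ge 2n$, it suffices to show that $L_n$ is primitively $n$-universal. Fix an arbitrary $\z_2$-lattice $\ell$ of rank $n$. I would first dispose of the case in which $\ell$ primitively represents a unit: then the unimodular Jordan component of $\ell$ is odd, hence diagonalizable, so $\ell\cong\ang{\epsilon}\mathbin{\perp}\ell'$ for some unit $\epsilon$ and some $\ell'$ of rank $n-1$, and Lemma~\ref{lem:easy2pnu}(b), applied with $J=\ang{1,-1}$ (which primitively represents both a unit and $K=\ang{\epsilon}$), shows at once that $L_n=\bbH^{n-1}\mathbin{\perp}\ang{1,-1}$ primitively represents $\ell'\mathbin{\perp}\ang{\epsilon}\cong\ell$. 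Hence I may assume $\ell$ represents no unit, i.e.\ $\ell$ is even.

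Next, recall from Lemma~\ref{lem:1-1} that $Q^\ast(\ang{1,-1})=\z_2^\times\cup 8\z_2$. If $\ell$ primitively represents some element of $8\z_2$ (equivalently $Q^\ast(\ell)\cap 8\z_2\ne\varnothing$), then $Q^\ast(\ell)\cap Q^\ast(\ang{1,-1})\ne\varnothing$, and Lemma~\ref{lem:easypnucap} immediately yields that $\ell$ is primitively represented by $\bbH^{n-1}\mathbin{\perp}\ang{1,-1}=L_n$. So the only surviving case is that $\ell$ is even and primitively represents no multiple of $8$.

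For this last case the key structural observation is that such an $\ell$ must possess a rank-$1$ Jordan component. Indeed, if every Jordan component of $\ell$ were binary even (hence of the form $2^a\bbH$ or $2^a\bbA$), then since $\rank\ell=n\ge 5$ there would be at least three such components, and $\ell$ would then necessarily primitively represent a multiple of $8$: any $2^a\bbH$ contributes a primitive isotropic vector of norm $0$, any $2^a\bbA$ with $a\ge 2$ has $Q^\ast(2^a\bbA)=2^{a+1}\z_2^\times\subseteq 8\z_2$, and if all blocks are $2^a\bbA$ with $a\le 1$ then among the $\ge 3$ of them two share the same $a$, giving $2^a\bbA\mathbin{\perp}2^a\bbA\cong 2^a\bbH\mathbin{\perp}2^a\bbH$ and again a primitive isotropic vector --- a contradiction. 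Thus I may write $\ell\cong\ang{2^i\epsilon}\mathbin{\perp}\ell'$ with $i\ge 1$ and $\rank\ell'=n-1$; moreover $i\le 2$, for otherwise the component $\ang{2^i\epsilon}$ itself represents a multiple of $8$. When $n\ge 6$, the complement $\ell'$ has rank $n-1\ge 5$, so by the induction hypothesis $L_{n-1}$ primitively represents $\ell'$, and Lemma~\ref{lem:easypnu}(b) (valid because $2^i\epsilon\in 2\z_2$) shows that $L_n=\bbH\mathbin{\perp}L_{n-1}$ primitively represents $\ang{2^i\epsilon}\mathbin{\perp}\ell'\cong\ell$, completing the inductive step.

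The base case $n=5$ is where I expect the main obstacle to lie, because the complement $\ell'$ then has rank $4$ and Lemma~\ref{lem:48excep} guarantees that $\bbH^3\mathbin{\perp}\ang{1,-1}$ primitively represents every quaternary $\z_2$-lattice \emph{except} $\bbA\mathbin{\perp}2\bbA$. I would rule out $\ell'\cong\bbA\mathbin{\perp}2\bbA$ directly: if $\ell\cong\ang{2^i\epsilon}\mathbin{\perp}\bbA\mathbin{\perp}2\bbA$, then combining the generator of $\ang{2^i\epsilon}$ with a suitable primitive vector of the $\bbA$-block (when $i=1$, chosen so that the two unit norms sum to $0$ modulo $4$, which is possible since $Q^\ast(\bbA)=2\z_2^\times$ meets both residue classes modulo $4$) or of the $2\bbA$-block (when $i=2$, where any choice works since both norms are $\equiv 0\pmod 4$) produces a primitive vector of $\ell$ whose norm lies in $8\z_2$ --- contradicting the standing assumption that $\ell$ represents no multiple of $8$. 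Hence $\ell'\not\cong\bbA\mathbin{\perp}2\bbA$, Lemma~\ref{lem:48excep} applies to give a primitive representation of $\ell'$ by $\bbH^3\mathbin{\perp}\ang{1,-1}=L_4$, and one last application of Lemma~\ref{lem:easypnu}(b) yields a primitive representation of $\ell$ by $L_5$. This establishes the induction and, together with the lower bound from Theorem~\ref{thm:indgen}, gives $u^\ast_{\z_2}(n)=2n$ for all $n\ge 5$. The two delicate points to get right are the structural claim forcing a rank-$1$ component and the explicit norm computations excluding $\bbA\mathbin{\perp}2\bbA$ in the base case.
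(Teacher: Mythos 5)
Your proposal is correct, and it is organized differently from the paper's proof, which is worth spelling out. The paper treats $n=5$, $n=6$, and $n\ge 7$ by three separate arguments: for $n=5$ it splits off a rank-one component, invokes Lemma~\ref{lem:48excep}, and then handles the exceptional quinary lattices $\bbA\mathbin{\perp}2\bbA\mathbin{\perp}\ang{2^a\epsilon}$ head-on via case analysis on $a$ together with explicit re-decompositions ($\bbA\mathbin{\perp}2\bbH\mathbin{\perp}\ang{20\epsilon}$ for $a=2$, $\bbH\mathbin{\perp}2\bbA\mathbin{\perp}\ang{10\epsilon}$ for $a=1$); for $n=6$ it uses the separate observation that three improper binary components cannot have all pairwise sums isometric to $\bbA\mathbin{\perp}2\bbA$; and for $n\ge 7$ it runs a parity-sensitive induction drawing on the cases $n-1$ and $n-2$. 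You instead run a single induction anchored at $n=5$, preceded by a clean trichotomy: if $\kn\ell=\z_2$, split off $\ang{\epsilon}$ and apply Lemma~\ref{lem:easy2pnu}(b); if $\ell$ is even with $Q^\ast(\ell)\cap 8\z_2\ne\varnothing$, apply Lemma~\ref{lem:easypnucap} directly at rank $n$ with no induction; otherwise show structurally that $\ell$ splits $\ang{2^i\epsilon}$ with $i\in\{1,2\}$ (any all-improper even lattice of rank $\ge 5$ contains a primitive vector of norm in $8\z_2$, via an isotropic block, a block $2^a\bbA$ with $a\ge 2$ whose primitive values are $2^{a+1}\z_2^\times$, or $\bbA\mathbin{\perp}\bbA\cong\bbH^2$) and reduce the rank by one. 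Your treatment of the base case is the most genuinely novel part: rather than computing with the exceptional lattice, you \emph{exclude} $\ell'\cong\bbA\mathbin{\perp}2\bbA$ by exhibiting a primitive vector of norm divisible by $8$ (using $Q^\ast(\bbA)=2\z_2^\times$ to choose $\delta\equiv-\epsilon\spmod4$ when $i=1$), contradicting the standing branch-3 hypothesis; this makes the paper's $n=6$ argument and its explicit quinary isometries unnecessary. Two small points to tighten in a final write-up: for odd $n$ the all-improper case is already impossible by parity, so the pigeonhole on three binary blocks is only needed for even $n\ge 6$; and your use of a primitive isotropic vector as giving a ``multiple of $8$'' is legitimate because $0\in 8\z_2\subseteq Q^\ast(\ang{1,-1})$ and Lemma~\ref{lem:easypnucap} permits $\beta=0$, but that convention deserves a sentence. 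Net effect: the paper's proof is more explicitly computational case by case, while yours trades those computations for one structural lemma and a uniform induction built on the same toolkit (Lemmas~\ref{lem:1-1}, \ref{lem:easypnu}, \ref{lem:easy2pnu}, \ref{lem:easypnueven}, \ref{lem:easypnucap}, \ref{lem:48excep}).
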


\begin{proof}
Suppose that $n=5$. Any $\z_2$-lattice of rank $5$ is split by a $\z_2$-lattice of rank $1$. Hence, by Lemma \ref{lem:48excep}, it suffices to show that any $\z_2$-lattice $\ell$ of the form $\bbA\mathbin{\perp} 2\bbA\mathbin{\perp}\ang{2^a\epsilon}$ is primitively represented by $\bbH^4\mathbin{\perp}\ang{1,-1}$. If $a=0$ or $a\ge 3$, then $\ang{2^a\epsilon}$ is primitively represented by $\ang{1,-1}$. If $a = 2$, then $\ell\cong \bbA\mathbin{\perp} 2\bbH\mathbin{\perp}\ang{20\epsilon}$. If $a = 1$, then $\ell\cong \bbH\mathbin{\perp} 2\bbA\mathbin{\perp}\ang{10\epsilon}$.

Suppose that $n=6$. Any $\z_2$-lattice of rank $6$ either has a component of rank $1$ or is an orthogonal sum of improper $\z_2$-lattices of rank $2$. For the former case, the primitive representability follows directly from the case when $n=5$ by Lemma~\ref{lem:easy2pnu}. For the latter case, consider a $\z_2$-lattice of the form $\ell_1\mathbin{\perp} \ell_2\mathbin{\perp} \ell_3$, where $\ell_i$'s are binary improper $\z_2$-lattices. Clearly, it is impossible that $\ell_1\mathbin{\perp} \ell_2 \cong \ell_1\mathbin{\perp} \ell_3 \cong \ell_2\mathbin{\perp} \ell_3 \cong \bbA\mathbin{\perp} 2\bbA$. Hence, we may assume that $\ell_1\mathbin{\perp} \ell_2 \not\cong \bbA\mathbin{\perp} 2\bbA$. Therefore, $\ell_1\mathbin{\perp} \ell_2$ is primitively represented by $\bbH^3\mathbin{\perp}\ang{1,-1}$, and $\ell_3$ is primitively represented by $\bbH^2$.

The case when $n\ge 7$ follows by an induction on $n$. It follows from the case of $n-1$ for any odd integer $n$, from the cases of $n-1$ and $n-2$ for any even integer $n$, by Lemmas~\ref{lem:easypnueven} and \ref{lem:easy2pnu}. This completes the proof.
\end{proof}

\end{document}